\documentclass[preprint,11pt]{elsarticle}

\usepackage{amsfonts, amsmath, amscd}
\usepackage[psamsfonts]{amssymb}

\usepackage{amssymb}

\usepackage{pb-diagram}

\usepackage[all,cmtip]{xy}

\usepackage{amsthm}

\usepackage[usenames]{color}

\newtheorem{theorem}{Theorem}[section]
\newtheorem{lemma}[theorem]{Lemma}
\newtheorem{corollary}[theorem]{Corollary}

\newtheorem{proposition}[theorem]{Proposition}

\newtheorem{problem}[theorem]{Problem}

\theoremstyle{definition}
\newtheorem{example}[theorem]{Example}
\newtheorem{definition}[theorem]{Definition}
\theoremstyle{remark}

\numberwithin{equation}{section}
\numberwithin{theorem}{section}

\newcommand{\e}{\varepsilon}
\newcommand{\w}{\omega}

\newcommand{\NN}{\mathbb{N}}

\newcommand{\RR}{\mathbb{R}}

\newcommand{\IR}{\mathbb{R}}

\newcommand{\AAA}{\mathcal{A}}

\newcommand{\Ra}{\Rightarrow}

\newcommand{\Int}{\mathsf{Int}}

\newcommand{\CC}{C_k}

\newcommand{\SM}{{\setminus}}

\def\om{\omega}

\def\Int{\operatorname{Int}}

\begin{document}

\begin{frontmatter}

\title{Baire-type properties of topological vector spaces}

\author{Saak Gabriyelyan}
\ead{saak@math.bgu.ac.il}
\address{Department of Mathematics, Ben-Gurion University of the Negev, Beer-Sheva, Israel}

\author{Alexander V. Osipov}
\ead{OAB@list.ru}
\address{Krasovskii Institute of Mathematics and Mechanics, Ural Federal University, Yekaterinburg, Russia}

\author{Evgenii Reznichenko}
\ead{erezn@inbox.ru}
\address{Department of Mathematics, Lomonosov Mosow State University, Moscow, Russia}

\begin{abstract}
Burzyk,  Kli\'{s} and Lipecki proved that every topological vector space (tvs) $E$ with the property $(K)$ is a Baire space.
K\c{a}kol and S\'{a}nchez Ruiz proved that every sequentially complete Fr\'{e}chet--Urysohn locally convex space (lcs) is  Baire.
Being motivated by the property $(K)$ and the notion of a Mackey null sequence we introduce a property $(MK)$ which is strictly weaker than the property $(K)$, and show that any locally complete lcs has the property $(MK)$.  We prove that any $\kappa$-Fr\'{e}chet--Urysohn tvs with the property $(MK)$ is a Baire space; consequently, each locally complete $\kappa$-Fr\'{e}chet--Urysohn lcs is a Baire space. This generalizes both the aforementioned results. We construct a feral Baire space $E$ with the property $(K)$ and which is not $\kappa$-Fr\'{e}chet--Urysohn.
Although a $\kappa$-Fr\'{e}chet--Urysohn lcs $E$ can be not a Baire space, we show that $E$ is always $b$-Baire-like in the sense of Ruess. Applications to spaces of Baire functions and $\CC$-spaces are given.

\end{abstract}

\begin{keyword}
Baire \sep $b$-Baire-like\sep property $(K)$ \sep property $(MK)$ \sep $\kappa$-Fr\'{e}chet--Urysohn

\MSC[2010] 46A03 \sep 54D99 \sep 54E52  

\end{keyword}

\end{frontmatter}


\section{Introduction}


One of the most important properties of topological vector spaces is the Baire property. Recall that a topological vector space (tvs for short) $E$ is {\em Baire} if the intersection of any sequence of open dense subsets of $E$ is dense in $E$. The starting point in the study of Baire topological vector spaces is the following classical result.

\begin{theorem} \label{t:cm-Baire}
Each complete and metrizable tvs $E$ is a Baire space.
\end{theorem}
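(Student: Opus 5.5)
The plan is to reduce to the Baire category theorem for complete metric spaces. I would construct a metric on $E$ that induces its topology and is complete, and then run the usual nested-balls argument.

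First, metrizability. Since $E$ is metrizable, the zero element has a countable base of neighbourhoods. By the Birkhoff--Kakutani theorem (or by the standard construction from a countable base of balanced neighbourhoods $U_{n+1}+U_{n+1}\subseteq U_n$), there is a translation-invariant metric $d$ generating the topology of $E$. The completeness hypothesis for a tvs refers to the uniform structure. I will note that for a translation-invariant $d$, the $d$-Cauchy sequences are exactly the Cauchy sequences of the vector uniformity, because the $d$-balls centred at zero form a base at zero. Moreover, in a metrizable uniform space, completeness is equivalent to sequential completeness. Hence $(E,d)$ is a complete metric space. This compatibility between the invariant metric and the tvs uniformity is the only point needing care, and it is the step I expect to be the main (though mild) obstacle.

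Next, the Baire argument. Let $(O_n)$ be open dense subsets of $E$ and $W$ a nonempty open set. I will choose closed balls $\overline{B}(x_n,r_n)$ with $r_n<2^{-n}$ such that
\[
\overline{B}(x_1,r_1)\subseteq W\cap O_1,\qquad \overline{B}(x_{n+1},r_{n+1})\subseteq B(x_n,r_n)\cap O_{n+1}.
\]
This is possible at each step because $B(x_n,r_n)\cap O_{n+1}$ is a nonempty open set, by density of $O_{n+1}$. The centres $x_n$ form a Cauchy sequence, since $d(x_n,x_m)<r_n$ for $m>n$. Its limit $x$ lies in every $\overline{B}(x_n,r_n)$, so $x\in W\cap\bigcap_n O_n$. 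Therefore $\bigcap_n O_n$ is dense, and $E$ is Baire.
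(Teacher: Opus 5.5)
Your proof is correct. Since $d(x,y)=d(x-y,0)$ and the $d$-balls at zero form a base at $0$, the metric uniformity coincides with the canonical uniformity of $E$. Hence $(E,d)$ is a complete metric space, and you only need the trivial direction: a $d$-Cauchy sequence is Cauchy in $E$ and therefore converges. The nested closed-ball argument then finishes the proof.

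The paper gives no proof of this statement. It quotes it as the classical starting point, and the classical proof is exactly your route.

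What the paper does instead is recover the statement as a special case of its own machinery, by a genuinely different route:
\begin{enumerate}
\item[(a)] A metrizable space is Fr\'{e}chet--Urysohn, hence $\kappa$-Fr\'{e}chet--Urysohn.
\item[(b)] A complete metrizable tvs has property $(K)$ (Proposition~\ref{p:metr+c=>K}), hence property $(MK)$ (Proposition~\ref{p:K=>MK}).
\item[(c)] Theorem~\ref{t:MK-Baire} then applies.
\end{enumerate}
That argument never builds nested balls. It argues by contradiction through Saxon's characterization (Theorem~\ref{t:Saxon}): take a closed, balanced, absorbing set $B$ with empty interior. Use $\kappa$-Fr\'{e}chet--Urysohnness to produce a null sequence escaping the multiples $k2^kB$, and rescale it to a Mackey null sequence $y_n\notin nB$. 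Finally, use the $K$-sequence property (via a lemma from the literature) to trap $(y_n)$ in some $tB$.

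Your argument is elementary and self-contained. It uses the linear structure only to obtain a complete metric, so it is really the Baire category theorem for completely metrizable spaces. The paper's route depends on Saxon's theorem and external results on $K$-sequences. What it buys is generality: completeness can be weakened to $(MK)$ (for instance, local completeness), and metrizability to $\kappa$-Fr\'{e}chet--Urysohnness. In those settings, such as $\IR^\lambda$ for uncountable $\lambda$, no complete metric is available and your method does not apply.
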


The following characterization of a Baire tvs is due to Saxon \cite[Theorem~1]{Saxon}.

\begin{theorem} \label{t:Saxon}
A tvs $E$ is a Baire space if, and only if, every absorbing balanced and closed subset of $E$ is a neighborhood of some point.
\end{theorem}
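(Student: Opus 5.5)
Both implications will be proved directly. The necessity is a one-line Baire category argument. The sufficiency builds, from an open dense set that is not residual-dense, a closed absorbing balanced set with empty interior.

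\emph{Necessity.} Assume $E$ is Baire and $A\subseteq E$ is absorbing, balanced and closed. Since $A$ is absorbing, $E=\bigcup_{n\in\NN} nA$. Each $nA$ is closed, because multiplication by $n$ is a homeomorphism. Since $E$ is Baire, some $nA$ has nonempty interior. Hence $A=\frac1n(nA)$ also has nonempty interior, so $A$ is a neighborhood of some point.

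\emph{Sufficiency.} Assume every absorbing, balanced, closed set is a neighborhood of some point, and suppose $E$ is not Baire. A tvs that is not Baire is of first category in itself. Indeed, if some nonempty open $U$ is meager, then by homogeneity every translate $x+U$ is meager. A maximal disjoint family of such translates has dense union, and a Banach category theorem argument then shows $E$ is meager. So write $E=\bigcup_n F_n$ with each $F_n$ closed and nowhere dense, and we may take $F_1\subseteq F_2\subseteq\cdots$.

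The aim is to replace the $F_n$ by a single closed absorbing balanced set with empty interior, which contradicts the hypothesis. The natural candidate is $B_n=\overline{\bigcup_{|t|\le 1} tF_n}$, together with a diagonal set such as $A=\bigcap_n\bigl(B_n\cup \ldots\bigr)$.

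\textbf{Main obstacle.} The balanced hull $\bigcup_{|t|\le1}tF_n$ of a nowhere dense set need not be nowhere dense; for example, a ray can become an open set. So I would instead follow Saxon's idea. Fix a balanced neighborhood base $\{U\}$ and use the sets $\bigcup_{k} k F_k$. More robustly, I would argue as follows. Since $E$ is meager, choose a balanced closed neighborhood $V$ of $0$. For each $n$, the set $nV\setminus F_n$ has nonempty interior meeting every open subset of $nV$. Build inductively points $x_n$ and disjoint small balanced neighborhoods, and set
\[
A=E\setminus \bigcup_n \bigl(\text{open sets } W_n\text{ chosen in } (n+1)V\setminus nV\text{ avoiding } F_n\bigr),
\]
arranged so that $A$ is balanced and closed. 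The $W_n$ should be "cones" $\{tw: |t|\in[1,2],\,w\in W_n'\}$ removed symmetrically. Then $A$ is absorbing, since every $x$ lies in $F_m$ for some $m$ and the removed sets are chosen disjoint from $\bigcup_{m}\frac1{k}F_m$-type sets. The $W_n$ are also dense enough that $A$ has empty interior.

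If this construction becomes unwieldy, the fallback is simply to invoke Saxon \cite[Theorem~1]{Saxon} directly. That is how the paper cites it, so the sufficiency direction would be quoted rather than reproved, and only the easy necessity direction would be written out above.
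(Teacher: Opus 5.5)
Your necessity argument is correct and complete. Your reduction of a non-Baire $E$ to $E$ being meager in itself (Banach category theorem plus homogeneity) is also fine. The paper gives no proof of this statement: it simply quotes Saxon's Theorem~1. So your fallback of citing Saxon is exactly the paper's treatment.

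Read as a proof, however, your sufficiency sketch has a genuine gap, and the construction as written fails.

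\begin{itemize}
\item Your $W_n$ lie in $(n+1)V\setminus nV$ with $n\ge1$, and your cones scale outward ($|t|\ge 1$). Then no removed set meets $V$: $tw\in V$ with $|t|\ge1$ would give $w\in t^{-1}V\subseteq V\subseteq nV$. Hence $V\subseteq A$, so $A$ is a neighborhood of $0$, the opposite of what you need. Since $E\setminus A$ must be open and dense, the removed sets have to accumulate at $0$.
\item The absorbency claim is unsupported. Each set $\bigcup_m\frac1kF_m$ equals $E$, so nothing nonempty is disjoint from it. Also, knowing $x\in F_m$ tells you nothing about the points $tx$ with $|t|\le\rho$, which is what absorbency requires.
\end{itemize}

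This is precisely the obstacle you identified yourself: the $F_m$ need not contain any initial segment of the line through $x$. The sketch does not resolve it.

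The missing idea is to first cover $E$ by an increasing sequence $R_1\subseteq R_2\subseteq\cdots$ of closed \emph{balanced} nowhere dense sets.

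\emph{From the $R_j$ to $B$.} Fix a balanced neighborhood $U$ of $0$ with $\overline U-\overline U\ne E$. Then $N=\bigcap_{s>0}s\overline U$ has empty interior: otherwise $N-N$ would be a neighborhood of $0$ contained in $s(\overline U-\overline U)$ for every $s>0$, forcing $\overline U-\overline U=E$. Put $B=\bigcap_j\bigl(\tfrac1j\overline U\cup R_j\bigr)$.
\begin{itemize}
\item $B$ is closed and balanced.
\item $B$ is absorbing: if $x\in R_m$, take $\rho\le1$ with $\rho x\in\tfrac1m\overline U$; then $\rho x\in B$.
\item $B$ has empty interior: an open $O\subseteq B$ satisfies $O\setminus\tfrac1j\overline U\subseteq R_j$, hence $O\subseteq\bigcap_j\tfrac1j\overline U$.
\end{itemize}

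\emph{Constructing the $R_j$ (real case).} Start from $E=\bigcup_nA_n$ with closed nowhere dense $A_n$.
\begin{itemize}
\item Baire's theorem on $(0,\infty)$, applied to the closed cover $\{t>0: tx\in A_n\}$, gives rationals $0<a<b$ and $n$ with $[a,b]x\subseteq A_n$.
\item Let $r=a/b$, $Q=\{tw: t\in[a,b],\ [a,b]w\subseteq A_n\}\subseteq A_n$, and $j$ with $bx\in j\overline U$. Then $(0,b]x=\bigcup_k r^k[a,b]x$ lies in $F=\overline{\bigcup_k r^k(Q\cap j\overline U)}$.
\item $F$ is nowhere dense. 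For each $K$ it lies in a finite union of closed nowhere dense sets together with $r^Kj\overline U$. So its interior lies in $N$ and is therefore empty.
\item Only countably many such $F$ arise. Apply this to $x$ and to $-x$: some finite union $G$ of such sets and $\{0\}$ contains $[-\rho,\rho]x$. Hence $\rho x$ lies in the closed, balanced, nowhere dense core $\{y: [-1,1]y\subseteq G\}$.
\item The integer multiples of these cores cover $E$. Finite unions of them, suitably enumerated, give the $R_j$.
\end{itemize}
For complex scalars, use polar rectangles and finitely many rotations instead of intervals.
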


Since the Baire property was studied in some classes of topological vector spaces or locally convex spaces which include complete and metrizable spaces.

The condition of completeness in Theorem \ref{t:cm-Baire} can be weaken to the property $(K)$ which is defined by the notion of a $K$-sequence. $K$-sequences in topological vector spaces had already been isolated by Mazur and Orlicz \cite{MazOr} and Alexiewicz \cite{Alex-50}.

\begin{definition}\label{def:ks}
A null sequence $(x_n)_n$ in a tvs $E$ is called a {\em $K$-sequence}  if every subsequence of $(x_n)_n$ contains a subsequence $(y_n)_n$ such that the series $\sum_n y_n$ converges in $E$.
\end{definition}

Then the property $(K)$ is defined as follows.

\begin{definition} \label{def:property-(K)}
A tvs $E$ has the {\em property $(K)$} if every null sequence in $E$ is a $K$-sequence.
\end{definition}

$K$-sequences and the property $(K)$ in the wider class of abelian metrizable groups were considered by  Antosik and Schwartz \cite{AntosikSchwartz1985} and by Burzyk,  Kli\'{s} and Lipecki \cite{BurzykLipecki1984}, see also Chapter 1 of the classical book  \cite{PB}. The following generalization of Theorem \ref{t:cm-Baire} follows from \cite[Theorem~2]{BurzykLipecki1984}.

\begin{theorem}[\cite{BurzykLipecki1984}] \label{t:m-K-Baire}
Each metrizable tvs $E$ with the property $(K)$ is a Baire space.
\end{theorem}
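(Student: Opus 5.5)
Our plan is to combine Saxon's criterion (Theorem \ref{t:Saxon}) with a gliding-hump construction powered by $K$-sequences. Suppose $E$ is metrizable with the property $(K)$ but is not Baire. By Theorem \ref{t:Saxon} there is an absorbing, balanced, closed set $A\subseteq E$ that is not a neighborhood of any point. Fix a decreasing base $(U_n)_n$ of balanced neighborhoods of zero with $U_{n+1}+U_{n+1}\subseteq U_n$.

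First we build a null sequence by recursion. Since $A$ is closed and not a neighborhood of any point, every translate $x+A$ and every multiple $mA$ is closed with empty interior; the latter holds because $mA$ is a homeomorphic image of $A$. We choose points $x_n$ and integers $m_n\to\infty$ inductively. Given the partial sums $s_{n-1}=x_1+\dots+x_{n-1}$, we look for $x_n$ with the following properties:
\begin{itemize}
\item $x_n\in U_n$, which makes $(x_n)_n$ a null sequence;
\item $s_{n-1}+x_n\notin n\,A$, which is possible because $nA$ has empty interior;
\item a whole small ball $s_{n-1}+x_n+V_n$ misses $nA$, using closedness, with $V_n$ chosen so that $V_n\supseteq U_{k}$ for a suitable $k$.
\end{itemize}
The subsequent $x_j$ are then forced into $V_n$ by shrinking $U$-indices.

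The second step is to handle subsequences, since $(K)$ only yields convergence of $\sum y_n$ along some sub-subsequence. We therefore strengthen the recursion so that the avoidance property holds for every finite partial sum over any subset of indices. That is, we require
$$\Big(\sum_{i\in F} x_i\Big) + V_n \ \cap\ nA=\emptyset$$
for all finite $F\subseteq\{1,\dots,n\}$ with $n\in F$. This is feasible at each step because only finitely many sets $F$ are involved, and a finite union of closed nowhere dense sets is nowhere dense. Now $(K)$ gives a subsequence $(y_k)=(x_{n_k})$ whose series converges to some $s$. Since $A$ is absorbing, $s\in mA$ for some $m$. For $n_k\ge m$ we use that $A$ is balanced, so $mA\subseteq n_kA$. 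The tail $\sum_{j>k} y_j$ lies in $V_{n_k}$, so $s\in \sum_{i\le k}y_i+V_{n_k}$, which is disjoint from $n_kA$. This is a contradiction.

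The main obstacle is the bookkeeping that keeps the tails in $V_{n_k}$ uniformly: $V_n$ depends on all earlier choices, and the tail sums of an arbitrary subsequence must stay in it. The $U_{n+1}+U_{n+1}\subseteq U_n$ telescoping handles this, provided we pick $x_{j}\in U_{k_n+j}$ where $V_n\supseteq U_{k_n}$ and $k_n$ is increasing. Closedness of $V_n$ would also be needed to pass to the limit, so we will take $V_n$ to be the closure of a smaller $U$.
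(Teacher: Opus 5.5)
Your proof is correct. It takes a different route from the paper, which does not prove this statement itself: it quotes it from Burzyk--Kli\'{s}--Lipecki and later recovers it as a special case of Theorem~\ref{t:MK-Baire}, via Proposition~\ref{p:K=>MK} and the fact that metrizable spaces are Fr\'{e}chet--Urysohn.

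In the proof of Theorem~\ref{t:kFU-MK-dense-Baire} the paper also starts from Saxon's criterion. It then uses Theorem~\ref{t:kFU-group-charac} to select, in one stroke, a null sequence with $x_k\notin k2^kB$. It rescales this to the Mackey null sequence $y_n=2^{-n}x_n\notin nB$, and delegates the gliding hump to Lemma~14.3 of \cite{kak}, which places a $K$-sequence inside $mB+M$ with $M$ finite.

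You build the gliding hump by hand instead. At step $n$ you pick $x_n$ in $U_{k_{n-1}+n}$ outside the closed nowhere dense set $\bigcup_{F'\subseteq\{1,\dots,n-1\}}\bigl(nA-\sum_{i\in F'}x_i\bigr)$. You then choose $k_n>k_{n-1}$ so that $V_n:=\overline{U_{k_n}}$ still avoids it, which is possible by regularity. The telescoping $U_{p+1}+\dots+U_{p+r}\subseteq U_p$ puts every finite sum of later terms into $U_{k_n}$, hence every tail into $V_n$. The contradiction then comes straight from $s\in mA\subseteq n_kA$.

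Your argument is elementary and self-contained, needs no external lemma on $K$-sequences, and uses only that one particular null sequence has a subsequence with convergent series. Its price is that the recursion is adaptive: $V_n$ depends on $x_1,\dots,x_n$, and nullness of $(x_n)$ comes from a fixed countable base. So it cannot be run in a non-first-countable space. Theorem~\ref{t:kFU-group-charac} only produces null sequences meeting countably many open sets fixed in advance, which is why the paper passes to Mackey null sequences. What the paper's approach buys is the extension to $\kappa$-Fr\'{e}chet--Urysohn spaces, to the weaker property $(MK)$, and to dense subspaces.
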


In the class of all locally convex spaces (lcs for short) Theorem \ref{t:cm-Baire} was generalized in two directions. The first one is to replace {\em completeness} by the strictly weaker condition of {\em sequential completeness}. Recall that a tvs $E$ is {\em sequentially complete} if each Cauchy sequence in $E$ converges to a point of $E$. The second direction is to weaken metrizability by the property of being a Fr\'{e}chet--Urysohn space. A Tychonoff space $X$ is {\em Fr\'{e}chet--Urysohn} if, for any $A\subseteq X$ and each $x\in \overline{A}$, there exists a sequence $\{a_n: n\in \w\}\subseteq A$ that converges to $x$. Topological vector spaces  which are Fr\'{e}chet--Urysohn were characterized by K\c{a}kol, L\'{o}pez-Pellicer and Todd \cite[Theorem~1]{KPLT}. The following generalization of Theorem \ref{t:cm-Baire} was proved by K\c{a}kol and S\'{a}nchez Ruiz \cite{KSR}.

\begin{theorem}[\cite{KSR}] \label{t:sc+FU=>Baire}
Every sequentially complete Fr\'{e}chet--Urysohn lcs is a Baire space.
\end{theorem}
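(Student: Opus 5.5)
We argue by contradiction and use Saxon's characterization (Theorem \ref{t:Saxon}). Suppose $E$ is a sequentially complete Fr\'{e}chet--Urysohn lcs that is not Baire. Then there is an absorbing, balanced, closed set $A\subseteq E$ which is not a neighbourhood of any point. We will build a Cauchy sequence whose limit lies outside $\bigcup_n nA$, which is impossible because $A$ is absorbing.

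\emph{Step 1 (a null sequence avoiding $nA$).} Fix $n$ and $x\in E$. Since $nA$ is closed and $x+nA$ is not a neighbourhood of $x$, we have $x\in\overline{E\setminus (x+nA)}$. By the Fr\'{e}chet--Urysohn property there is a sequence $z_k\to 0$ with $x+z_k\notin nA$ for all $k$. Closedness of $nA$ then gives some slack: for each such $z_k$ there is a neighbourhood $W$ of $0$ with $x+z_k+W$ disjoint from $nA$.

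\emph{Step 2 (inductive construction).} Let $(U_n)$ be closed absolutely convex $0$-neighbourhoods (not necessarily a base) with $U_{n+1}+U_{n+1}\subseteq U_n$. We choose points $x_n$, with $x_0=0$, and closed absolutely convex neighbourhoods $V_n$ such that:
\begin{itemize}
\item $x_{n+1}-x_n\in V_n$ and $V_{n+1}+V_{n+1}\subseteq V_n$;
\item $(x_{n+1}+V_{n+1})\cap nA=\emptyset$ for every $n$.
\end{itemize}
The first step is supplied by Step 1 together with the shrinking of $V$. Then $\sum_{m>n}(x_{m+1}-x_m)$ always lies in $V_{n+1}$, so the limit $x$ of $(x_n)$, if it exists, lies in $x_{n+1}+V_{n+1}$. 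Hence $x\notin nA$ for every $n$, contradicting absorbency.

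\emph{Step 3 (the main obstacle).} The difficulty is guaranteeing that $(x_n)$ is Cauchy in the \emph{original} topology. The $V_n$ only control one countable family of neighbourhoods, but $E$ need not be metrizable. Here we use the Fr\'{e}chet--Urysohn structure more strongly, in the spirit of \cite[Theorem~1]{KPLT}: a Fr\'{e}chet--Urysohn lcs has countable-type behaviour of null sequences (a diagonal/$\alpha_4$-type property). Concretely, in Step 1 we obtain whole null sequences $(z^{(n)}_k)_k$. Using the Fr\'{e}chet--Urysohn property of the tvs (Nyikos' $\alpha_4$ property for Fr\'{e}chet--Urysohn groups), we choose a diagonal null sequence that meets infinitely many of these sequences. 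This lets us pick the increments $x_{n+1}-x_n=z^{(n)}_{k_n}$ so that the increments, and after passing to a summable selection their partial sums, tend to $0$.

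To get summability rather than just convergence of the terms, we pick the increments inside $2^{-n}B$ for a suitable bounded absolutely convex set $B$, for instance the closed absolutely convex hull of the diagonal null sequence. The series $\sum 2^{-n}b_n$ with $b_n\in B$ is then Cauchy in $E$, since $B$ is bounded and the partial sums differ by elements of $2^{-n}B$. Sequential completeness gives the limit $x$. Closedness of $A$ and Step 2 place $x$ outside every $nA$, which gives the contradiction. I expect the delicate point to be reconciling this rescaling by $2^{-n}$ with the requirement in Step 1 that the increment leave $nA$. This is handled by applying Step 1 to the balanced set $A$ at scale $2^{-n}$: note that $2^{-n}A$ is still not a neighbourhood of any point, so the same argument applies.
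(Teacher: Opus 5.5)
\textbf{There is a genuine gap: Step 3 does not show that $(x_n)$ is Cauchy in $E$, and that is the whole content of the theorem.}

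Steps 1--2 are the usual nested Baire-category argument, with two small slips. In Step 1 you need that $nA$ has empty interior, so that $x\in\overline{E\setminus nA}$. From ``$x+nA$ is not a neighbourhood of $x$'' you only get $z_k\notin nA$, not $x+z_k\notin nA$. In Step 2 the tail $\sum_{m>n}(x_{m+1}-x_m)$ lies in $V_{n+1}+V_{n+1}\subseteq V_n$, not in $V_{n+1}$. Both are easily repaired.

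Step 3, however, fails in three places.

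\emph{Order of quantifiers.} The null sequences $(z^{(n)}_k)_k$ are produced at the point $x_n$, which depends on the increments already chosen. But $(\alpha_4)$, like Theorem \ref{t:kFU-group-charac}, applies to a countable family of sequences (or open sets) fixed in advance, so you cannot diagonalize first and induct afterwards. Even for a fixed family, $(\alpha_4)$ only gives a null sequence meeting infinitely many $z^{(n)}$, at terms you do not control, so nothing puts them into your $V_n$.

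\emph{Passing to a selection.} Taking a ``summable selection'' changes the partial sums. The separation $(x_{n+1}+V_{n+1})\cap nA=\emptyset$ was arranged only for the original partial sums, and since $A$ is not convex, deleting increments destroys it.

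\emph{Circular Banach-disc step.} You need the normalized increments $b_n=2^n(x_{n+1}-x_n)$, each picked from a different, stage-dependent null sequence, to lie in one bounded set. That is exactly the non-metrizable diagonalization you set out to perform. If $B$ is the hull of the diagonal sequence and the increments are its terms, you only get $x_{n+1}-x_n\in B$, not $x_{n+1}-x_n\in 2^{-n}B$.

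\textbf{How the paper proceeds.} The paper uses no nested construction. It derives the statement from Corollary \ref{c:lc-kFU-Baire}:
\begin{itemize}
\item sequential completeness gives local completeness, hence property $(MK)$ by Proposition \ref{p:lc-k-complete} (this is where your Banach-disc summation correctly belongs);
\item Fr\'echet--Urysohn implies $\kappa$-Fr\'echet--Urysohn;
\item Theorem \ref{t:kFU-MK-dense-Baire} applies Theorem \ref{t:kFU-group-charac} once, to the fixed open sets $E\setminus k2^kA$, to get $x_k\to0$ with $x_k\notin k2^kA$;
\item then $y_k=2^{-k}x_k$ is a Mackey null $K$-sequence with $y_k\notin kA$, and Lemma 14.3 of \cite{kak} is invoked.
\end{itemize}

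If you turn to that route, scrutinize its last inference, from $\{y_n\}\subseteq mB+M$ with $M$ finite to $\{y_n\}\subseteq tB$. It tacitly uses convexity, which $B$ need not have. Here is a counterexample in $\IR^2$.
\begin{itemize}
\item Put $v_k=(\cos\frac1k,\sin\frac1k)$.
\item Choose pairwise disjoint small open arcs $S_k\ni v_k$ of the unit circle, with angles in $(0,1.1)$.
\item Let $A$ be the complement of $\bigcup_k\{tv:\ |t|>4^{-k},\ v\in S_k\}$.
\end{itemize}
Then $A$ is closed, balanced and absorbing. The points $y_k=2k\,4^{-k}v_k$ satisfy $2^ky_k\to0$ and $y_k\notin kA$. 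Yet $\{y_k\}\subseteq A+M$ for a finite $M$, because $A$ contains a neighbourhood of $(0,1)$.

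Now pass to $c_{00}=\IR^2\oplus F$ with the sup-norm, where $F$ is the span of $e_3,e_4,\dots$. Replace $A$ by $A\times C$, with $C\subseteq F$ closed, balanced, absorbing and nowhere dense. This reproduces, in a non-Baire space, everything the proof uses after its Claim 1.

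So one non-adaptively chosen $K$-sequence cannot finish the argument, at least not via the inference as written. The adaptive difficulty you isolated in Step 3 is genuine and still has to be overcome.
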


The main purpose of this article is to obtain new sufficient conditions on a tvs $E$ under which $E$ is a Baire space and, at the same time, these conditions are strictly weaker than the property $(K)$, sequential completeness and Fr\'{e}chet--Urysohness considered in Theorems \ref{t:m-K-Baire} and \ref{t:sc+FU=>Baire}.

%
%
%

Let us start from a generalization of the property $(K)$. Let $E$ be a locally convex space. One of the most important classes of null sequences in $E$ is the class of Mackey null sequences. A sequence $\{x_n\}_{n\in\w}$ in $E$  is  {\em Mackey null}  if there is a disc $B\subseteq E$ such that $p_B(x_n)\to 0$, where $p_B$ is the Minkowski functional of $B$ (see \cite[2.10.17~Definition]{BS2017tvs}). It is well-known (see \cite[Lemma~2.10.18]{BS2017tvs}) that $\{x_n\}_{n\in\w}$ is Mackey null if, and only if, there exists an increasing unbounded sequence  $\{a_n\}_{n\in\w}$ of positive numbers such that $a_n x_n \to 0$ in $E$. This characterization can be extended to any topological vector space.

\begin{definition}\label{def:mc}
A  sequence $\{x_n\}_{n\in\w}$ in a tvs $E$ is said to be  {\em Mackey null}  if there exists an increasing unbounded sequence  $\{a_n\}_{n\in\w}$ of positive numbers such that $a_n x_n \to 0$ in $E$.
\end{definition}

Replacing ``null sequences'' by ``Mackey null sequences'' we obtain the following notion which is crucial in the proofs of the main results of the article.

\begin{definition}\label{def:MK-property}
A tvs $E$ has the {\em property $(MK)$} if every Mackey null sequence in $E$ is a $K$-sequence.
\end{definition}

Topological vector spaces with the property $(MK)$ are studied in Section \ref{sec:K-MK}. In Proposition \ref{p:K=>MK} we show that if a tvs $E$ has the property $(K)$, then it has the property $(MK)$. If, in addition, $E$ is Fr\'{e}chet--Urysohn, then $E$ has the property $(K)$ if, and only if, it has the property $(MK)$, see Proposition \ref{p:al4-MK=K}. The condition of being Fr\'{e}chet--Urysohn in Proposition \ref{p:al4-MK=K} is essential, see Example \ref{exa:MK-not=K} which states that the {\em complete} lcs $\IR^{\ell_2}$ has the property $(MK)$ but it does not have the property $(K)$. On the other hand, in Proposition \ref{p:lc-k-complete} we prove that each  locally complete lcs $E$ has the property $(MK)$. Recall that an lcs $L$ is {\em locally complete}  if the closed absolutely convex hull of any null sequence in $L$ is compact. For numerous equivalent conditions for an  lcs $L$ to be locally complete see Proposition 5.1.6 and Theorem 5.1.11 in \cite{PB}. It is known that any sequentially complete lcs is locally complete, but the converse assertion is not true in general. In Proposition \ref{p:feral-K} we show that any feral tvs $E$ (that is, any bounded subset of $E$ is finite-dimensional) has the property $(K)$.

Another direction to strengthen Theorem \ref{t:m-K-Baire} and Theorem \ref{t:sc+FU=>Baire} is to consider a topological property which is strictly weaker than the property of being a Fr\'{e}chet--Urysohn space. A considerably wider class of topological spaces than the class of Fr\'{e}chet--Urysohn spaces is the class of $\kappa$-Fr\'{e}chet--Urysohn spaces introduced by Arhangel'skii.

\begin{definition}\label{def:kFU}
A Tychonoff space $X$ is called {\em $\kappa$-Fr\'{e}chet--Urysohn} if, for any open set $U\subseteq X$ and each point $x\in \overline{U}$, there exists a sequence $\{x_n\}_{n\in \w}\subseteq U$ that converges to~$x$.
\end{definition}
\noindent Clearly, every Fr\'{e}chet--Urysohn space is $\kappa$-Fr\'{e}chet--Urysohn. It should be mentioned that without using of the term ``$\kappa$-Fr\'{e}chet--Urysohn'', Mr\'{o}wka proved in \cite{Mrowka} that any product of first countable spaces is  $\kappa$-Fr\'{e}chet--Urysohn. In particular, for every uncountable cardinal $\lambda$, the space $\IR^\lambda$ is $\kappa$-Fr\'{e}chet--Urysohn but not Fr\'{e}chet--Urysohn.

The main results of the article are proved in Section \ref{sec:kFU-Baire1}. In Theorem \ref{t:kFU-group-charac} we characterize $\kappa$-Fr\'{e}chet--Urysohn topological vector spaces. Using this characterization we prove the following theorem which is our principal result. 

\begin{theorem} \label{t:MK-Baire}
A $\kappa$-Fr\'{e}chet--Urysohn tvs $E$ with the property $(MK)$ is a Baire space.
\end{theorem}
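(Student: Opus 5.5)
We argue by contradiction via Saxon's criterion (Theorem \ref{t:Saxon}): it suffices to show that every absorbing, balanced, closed set $A\subseteq E$ is a neighborhood of some point. Suppose instead that some absorbing balanced closed $A$ has empty interior. Then $U=E\setminus A$ is open and dense, and so is each $E\setminus nA$, because $nA$ is closed with empty interior. Since $A$ is absorbing and balanced, $E=\bigcup_n nA$. The plan is to build a Mackey null sequence $(x_n)$ such that no subseries of any subsequence converges. This contradicts $(MK)$.

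\textbf{Step 1: Mackey null sequences in open sets.} Fix a decreasing base $(V_k)$ of balanced zero neighborhoods; we may work along countably many of them. We use the $\kappa$-Fr\'echet--Urysohn property as follows. Fix an open set $W$ with $0\in\overline{W}$. I expect the characterization Theorem \ref{t:kFU-group-charac} to say that for any sequence of such open sets $W_n$ one can pick $x_n\in W_n$ and scalars $a_n\uparrow\infty$ with $a_nx_n\to0$. If we do not rely on that theorem, we argue directly. The set $\bigcup_n \frac1n W$ is open and has $0$ in its closure, so there is a sequence $y_j\to0$ with $y_j\in \frac1{n_j}W$. After rescaling, $n_jy_j\in W$, but this need not tend to $0$. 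The better approach is to apply $\kappa$-FU to the open set $\bigcup_n \frac1{n}(W\cap V_n)$ to get $z_j=\frac1{m_j}w_j\to 0$ with $w_j\in W$. This is the key technical step: producing, inside given open sets accumulating at $0$, a sequence that goes to $0$ ``faster than'' an unbounded multiplier.

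\textbf{Step 2: inductive construction.} We choose points $x_n$ and indices recursively so that every finite partial sum of any subseries stays away from $A$ in a controlled way. At stage $n$, the set $O_n=\{x: s+x\notin nA$ for all finite sums $s$ of earlier $x_i\}$ is a finite intersection of translates of $E\setminus nA$. It is therefore open and dense, so $0\in\overline{O_n\cap V_n}$. Using Step 1, we choose $x_n\in O_n\cap V_n$ with the Mackey condition $a_nx_n\in V_n$ for a fixed increasing unbounded $(a_n)$. To carry this out we run the $\kappa$-FU argument on a diagonal open set $\bigcup_n (O_n\cap \frac1{a_n}V_n)$ and extract a subsequence hitting infinitely many $n$ while converging to $0$.

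\textbf{Step 3: contradiction.} By $(MK)$, some subsequence $(x_{n_k})$ has a convergent series $\sum_k x_{n_k}=s$. Since $A$ is absorbing, $s\in mA$ for some $m$; and since $A$ is balanced, $s\in m'A$ for every $m'\ge m$. The partial sums $s_K$ converge to $s$. Each tail $\sum_{k>K}x_{n_k}$ lies in a small neighborhood, so with $A$ closed we can control membership. Ensuring that ``$s_K+x\notin nA$'' passes to the limit is delicate, because it is an open condition and open conditions do not survive limits. To get around this, we strengthen Step 2 so that $(s+V'_n)\cap nA=\emptyset$ for a neighborhood $V'_n$, with later terms chosen so that the whole tail lies in $V'_n$. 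This gives $s\notin nA$ for all large $n$, contradicting $s\in m'A$ for all $m'\ge m$. The main obstacle is Step 1/2: reconciling the diagonal choice forced by $\kappa$-Fr\'echet--Urysohn, which yields points only from some of the $O_n$, with the recursive dependence of $O_n$ on earlier choices. I would try to handle this by making the construction in blocks, applying $\kappa$-FU once per block.
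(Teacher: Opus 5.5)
\paragraph{Main gap.} Your proposal has a genuine gap, and it is the obstacle you flag yourself: Steps~1--2 never actually produce the Mackey null sequence. A $\kappa$-Fr\'{e}chet--Urysohn tvs need not be first countable (e.g.\ $\IR^\lambda$ for uncountable $\lambda$), so there is no countable base $(V_k)$. Choosing $x_n\in O_n\cap V_n$ with $a_nx_n\in V_n$ one point at a time therefore gives no convergence at all.

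The $\kappa$-Fr\'{e}chet--Urysohn property yields a null sequence only inside an open set that is fixed \emph{before} the sequence is produced. Theorem~\ref{t:kFU-group-charac} extends this to a sequence of open sets, but they too must be fixed in advance. Your $O_n$ depend on the earlier $x_i$, so the ``diagonal open set'' $\bigcup_n (O_n\cap \frac{1}{a_n}V_n)$ is not available. A block construction does not repair this. Merging terms drawn from infinitely many different null sequences into one null sequence requires a diagonalization property of $(\alpha_4)$ type, which $\kappa$-Fr\'{e}chet--Urysohn spaces need not have.

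Step~1 is also incorrect as written. For the open set $\bigcup_n \frac1n(W\cap V_n)$, nothing forces $m_j\to\infty$, since already $0\in\overline{W\cap V_1}$. Moreover, $w_j\in V_{m_j}$ does not imply $w_j\to 0$. Forcing infinitely many indices is exactly what the translation trick in the proof of Theorem~\ref{t:kFU-group-charac} accomplishes ($p_n\to 0$ with $0\notin\overline{p_n+W_n}$). Mackey nullity is then obtained simply by applying that theorem to rescaled open sets.

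\paragraph{How the paper proceeds.} The paper (Theorem~\ref{t:kFU-MK-dense-Baire} with $L=E$) separates the two tasks. Taking $B$ from Saxon's theorem, it applies Theorem~\ref{t:kFU-group-charac} once, to the fixed open dense sets $E\setminus n2^nB$. This gives $x_k\to 0$ with $x_k\notin k2^kB$, because $n_k\ge k$ and $B$ is balanced. Then $y_k=2^{-k}x_k$ is Mackey null with $y_k\notin kB$, hence a $K$-sequence by $(MK)$. The job of your Step~3 is handed to Lemma~14.3 of \cite{kak}, an absorption lemma for $K$-sequences with respect to the increasing closed cover $(kB)$. There only \emph{indices} of an already fixed null sequence are chosen, so controlling tails is unproblematic.

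\paragraph{A caution on the decoupled route.} After this decoupling, the terms can no longer be chosen to avoid the translates $kB-s$ by earlier subsums, which is what your recursion was designed to achieve. The lemma gives only $\{y_n\}\subseteq mB+M$ with $M$ finite. Passing from this to $\{y_n\}\subseteq tB$ needs some additivity of $B$ (automatic if $B$ is absolutely convex), not just absorbency. Already in $\IR^2$ there are a closed balanced absorbing $B$ and $y_n\to 0$ with $y_n\notin nB$ for all $n$, yet $\{y_n\}\subseteq B+M$ for a finite $M$. So the subsum issue you isolated is real, and it is precisely the place where the non-convex case requires care.
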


The following corollary immediately follows from Theorem \ref{t:MK-Baire} and the aforementioned  Proposition \ref{p:lc-k-complete}.

\begin{corollary} \label{c:lc-kFU-Baire}
A locally complete $\kappa$-Fr\'{e}chet--Urysohn locally convex space is a Baire space.
\end{corollary}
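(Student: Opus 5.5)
We argue by contradiction using Saxon's characterization (Theorem \ref{t:Saxon}). Suppose $E$ is not Baire. Then there is an absorbing, balanced, closed set $A\subseteq E$ that is not a neighbourhood of any point, so $\Int A=\emptyset$. Since $A$ is absorbing, $E=\bigcup_n nA$.

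The plan is to build, by induction, a sequence $(x_n)$ and numbers $a_n\uparrow\infty$ such that $a_nx_n\to 0$, so that $(x_n)$ is Mackey null. We then apply property $(MK)$ to extract a subsequence $(y_k)$ with $s=\sum_k y_k$ convergent in $E$. The construction will be arranged so that $s$ lies outside every $mA$, contradicting $E=\bigcup_m mA$.

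\textbf{Key tool.} The $\kappa$-Fr\'echet--Urysohn property enters through the following fact. If $U$ is open and $x\in\overline U$, there is a sequence in $U$ converging to $x$. Apply this with $U=E\setminus mA$, which is open and dense because $mA$ is closed with empty interior. Then every point, in particular every finite partial sum $s_k$, is the limit of a sequence in $E\setminus mA$. Writing such a sequence as $s_k+z_j$, we get $z_j\to0$ with $s_k+z_j\notin mA$.

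\textbf{Making the pieces Mackey null.} Ordinary null sequences are not enough; we need Mackey null ones. I would use the characterization of $\kappa$-Fr\'echet--Urysohn tvs (Theorem \ref{t:kFU-group-charac}), or argue directly as follows. Since $mA$ is balanced and closed with empty interior, the sets $E\setminus cA$ for $c\ge m$ shrink but stay open and dense. Choosing $z$ in $E\setminus (m t)A$ with $t$ large, and then rescaling by $t$, gives $z/t\notin mA$. The aim is a double-sequence or diagonal argument producing a single sequence $(z_j)$ with $c_jz_j\to0$ for some $c_j\to\infty$ and $s_k+z_j\notin mA$. Concretely, I would take $U=\bigcup_{c}c\,(E\setminus \ldots)$-type open sets whose closure contains $0$, and use a diagonal choice across a countable family of sequences. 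This step is the main obstacle: we must guarantee Mackey nullness, and not merely nullness, from the $\kappa$-FU hypothesis. I expect this is exactly what the characterization theorem provides. Roughly, for every open set $U$ with $0\in\overline U$ there are $u_n\in U$ and scalars $a_n\to\infty$ with $a_nu_n\to0$, obtained by applying the property to $\bigcup_n \frac1n U$-like sets.

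\textbf{Induction and stability.} At step $k$, we choose $y_k$ so that $s_k=y_1+\dots+y_k\notin kA$, and moreover so that some neighbourhood $W_k$ of $s_k$ misses $kA$ (this uses openness of the complement). Each later choice of $y_j$ must be small enough that the tail $\sum_{j>k}y_j$ stays in a closed neighbourhood $V_k$ with $s_k+V_k\subseteq W_k$. The $K$-extraction may skip terms, which disturbs these partial sums. To handle this, I would run the construction adaptively in the standard gliding-hump way: apply $(MK)$ to the whole Mackey null sequence of candidates, but build the candidates so that any subsequence sum still avoids each $mA$. This works because the avoidance is verified at the partial sums actually selected, using the nested neighbourhoods $V_k$ and balancedness of $A$ (so that $s\notin mA$ for $m\le k$ follows from $s\in W_k$).

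This yields $s\notin\bigcup_m mA=E$, which is the contradiction.
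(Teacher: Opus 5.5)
\textbf{Verdict: genuine gap.} The gap is exactly the step you flag as the main obstacle. Your construction is adaptive. For the uncontrolled extraction supplied by $(MK)$ to be harmless, at stage $k$ you would need $y_k$ to lie in the open dense set $\bigcap_{s}\bigl(E\setminus(kA-s)\bigr)$, where $s$ runs over the $2^{k-1}$ finite subsums of $y_1,\dots,y_{k-1}$. You would then need every later finite subsum to stay inside a neighbourhood $V_k$ chosen after $y_k$. In a metrizable space this is the classical gliding hump and it works, because you can also put $y_k$ into the $k$-th member of a countable base.

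In a $\kappa$-Fr\'{e}chet--Urysohn space nothing of the sort is available. Membership in countably many adaptively chosen neighbourhoods does not make $(y_k)$ null, let alone Mackey null. Theorem \ref{t:kFU-group-charac} only diagonalizes a sequence $(U_n)$ of open sets \emph{fixed in advance}, and it returns all the points $x_k\in U_{n_k}$ at once, so no $U_n$ may depend on earlier selections. It does give you a Mackey null sequence inside one fixed open $U$ with $0\in\overline U$: apply it to $U_n=nU$ (not $\tfrac1n U$) and set $u_k=x_k/n_k$. But taking one term from each of countably many such sequences, each defined only after the previous choice, is a diagonal selection about which $\kappa$-Fr\'{e}chet--Urysohness says nothing. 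So the sequence to which you apply $(MK)$ is never shown to be Mackey null. Likewise, ``build the candidates so that any subsequence sum still avoids each $mA$'' is a wish, not a construction. Also, $(MK)$ is not a hypothesis here; you must first invoke Proposition \ref{p:lc-k-complete}.

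The paper gets the corollary in one line from Theorem \ref{t:MK-Baire} and Proposition \ref{p:lc-k-complete}. In the proof of Theorem \ref{t:kFU-MK-dense-Baire} it avoids adaptivity altogether. Theorem \ref{t:kFU-group-charac} is applied once, to the fixed family $U_n=E\setminus n2^nB$, giving $x_k\to 0$ with $x_k\notin k2^kB$. Then $y_k=2^{-k}x_k$ is Mackey null with $y_k\notin kB$, and the $K$-property is used through Lemma 14.3 of \cite{kak}, where all later choices are just indices in this fixed null sequence.

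Be warned that your worry about translates is not dispelled by that route as written. The lemma yields $\{y_n\}\subseteq mB+M$ with $M$ finite, and passing to $\{y_n\}\subseteq tB$ is immediate only when $B$ is absolutely convex. For a merely balanced $B$ it fails in general. For example, in $\IR^2$ the set $B=\{(a,b):|b|\geq a^2\}\cup([-1,1]\times\{0\})$ is closed, balanced and absorbing. The sequence $y_n=(1/n,1/n^3)$ is a Mackey null $K$-sequence with $y_n\in B+(0,-1)$ for all $n$, yet $y_n\notin(n-1)B$.

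So on either route, what is really needed is a way to make $\kappa$-Fr\'{e}chet--Urysohness, together with $\Int B=\emptyset$, produce a Mackey null sequence that avoids the translates $kB-s$ by the relevant finite subsums $s$, not merely $kB$ itself. Your proposal identifies this need but does not meet it.
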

\noindent It is clear that Theorem \ref{t:MK-Baire} and Corollary \ref{c:lc-kFU-Baire} essentially generalize Theorem \ref{t:m-K-Baire} and  Theorem  \ref{t:sc+FU=>Baire}, respectively.

In Section \ref{exa:feral-Baire} we show that the property $(MK)$ (and hence the property $(K)$), $\kappa$-Fr\'{e}chet--Urysohness and Baireness are completely independent. Taking into consideration Theorem \ref{t:MK-Baire}, the most intriguing and interesting problem is the following one: {\em Is it true that if an lcs $E$ is Baire, then it is a $\kappa$-Fr\'{e}chet--Urysohn space}? This problem is especially interesting because for the classical function spaces, the $\kappa$-Fr\'{e}chet--Urysohness is a very natural necessary condition to be a Baire space.
Indeed, let $X$ be a Tychonoff space. We denote by $C_p(X)$ and $\CC(X)$ the space $C(X)$ of all real-valued functions on $X$ endowed with the pointwise topology or the compact-open topology, respectively. In \cite[Proposition~2.6]{Sak2}, Sakai proved that if $C_p(X)$ is a Baire space, then it is $\kappa$-Fr\'{e}chet--Urysohn. Analogously, Sakai \cite[Proposition~2.5]{Sakai} proved that also the Baireness of $\CC(X)$ implies that $\CC(X)$ is a $\kappa$-Fr\'{e}chet--Urysohn space.
These results motivate the following problem: Is it true that if an lcs $E$ is Baire, then it is a $\kappa$-Fr\'{e}chet--Urysohn space? In Example  \ref{lcb-not_kFU}  we answer this problem negatively by showing that there is a Baire lcs $E$ which is even feral  but not $\kappa$-Fr\'{e}chet--Urysohn.

Although $\kappa$-Fr\'{e}chet--Urysohn spaces can be not Baire (see Example \ref{exa:metr-not-K-Baire} below), it turns out that any $\kappa$-Fr\'{e}chet--Urysohn lcs $E$ has a weaker Baire-type property. In Theorem \ref{t:kFU=>b-Baire-like} we prove that each $\kappa$-Fr\'{e}chet--Urysohn locally convex space $E$ is $b$-Baire-like, and hence $E$ is quasibarrelled (for all relevant definitions see Section \ref{sec:kFU-Baire}). This somewhat surprising result gives numerous new examples of $b$-Baire-like spaces even in $\CC$-spaces, see Corollary \ref{c:Ck-b-Baire-like} below. 
In Theorem \ref{t:kFU-Baire-like} we characterize $\kappa$-Fr\'{e}chet--Urysohn locally convex spaces which are Baire-like.

In the last Section \ref{sec:B1}, we give some applications of Theorem \ref{t:MK-Baire} and its Corollary \ref{c:lc-kFU-Baire}. In particular, we give  very short proofs of two highly non-trivial results about Baireness of the space $B_\alpha(X)$ of Baire-$\alpha$ functions.
We  prove that for a $k_\IR$-space $X$, $\CC(X)$ is a Baire space if, and only if, $\CC(X)$ is a $\kappa$-Fr\'{e}chet--Urysohn space
if, and only if, $X$ has the property $(\kappa_k)$ of Sakai.


\section{Property $(K)$ and property $(MK)$ of topological vector spaces  } \label{sec:K-MK}


In this section we study the property $(K)$ and the property $(MK)$ and relationships between them. In the next assertion we show that the property $(MK)$ is weaker than the property $(K)$.

\begin{proposition} \label{p:K=>MK}
If a tvs $E$ has the property $(K)$, then it has the property $(MK)$.
\end{proposition}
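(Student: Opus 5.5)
The plan is to show that every Mackey null sequence is in particular a null sequence. Once that is done, the property $(K)$ makes it a $K$-sequence, which is exactly what the property $(MK)$ requires.

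Let $\{x_n\}_{n\in\w}$ be Mackey null in $E$. By Definition \ref{def:mc} there is an increasing unbounded sequence $\{a_n\}_{n\in\w}$ of positive numbers with $a_n x_n\to 0$.

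Fix a neighborhood $U$ of zero. Choose a balanced neighborhood $V\subseteq U$ of zero, which exists in any tvs. Since $a_nx_n\to 0$, there is $N_1$ with $a_nx_n\in V$ for all $n\ge N_1$. Since $a_n\to\infty$, there is $N_2$ with $a_n\ge 1$ for all $n\ge N_2$. For $n\ge\max\{N_1,N_2\}$ we have $|1/a_n|\le 1$, so balancedness of $V$ gives
\[
x_n=\tfrac{1}{a_n}(a_nx_n)\in V\subseteq U.
\]
Hence $x_n\to 0$, so $\{x_n\}_{n\in\w}$ is a null sequence.

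By Definition \ref{def:property-(K)}, this null sequence is a $K$-sequence in the sense of Definition \ref{def:ks}. Since the Mackey null sequence was arbitrary, $E$ has the property $(MK)$ by Definition \ref{def:MK-property}. No step is a real obstacle; the only point needing care is invoking balanced neighborhoods so that the scalar $1/a_n\le 1$ keeps $x_n$ inside $V$.
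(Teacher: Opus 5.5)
Your proof is correct and follows essentially the same route as the paper: a balanced neighborhood of zero and the eventual inequality $a_n\ge 1$ show that a Mackey null sequence is null, and property $(K)$ then makes it a $K$-sequence. The only cosmetic difference is that you derive $a_n\ge 1$ for large $n$ from unboundedness, whereas the paper simply assumes it without loss of generality.
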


\begin{proof}
To prove the proposition it suffices to show that any Mackey null sequence $\{x_n\}_{n\in\w}$ in $E$ is a null sequence. To this end, let $U$ be an arbitrary absorbent and balanced neighborhood of $0$, and let $\{a_n\}_{n\in\w}$ be an increasing unbounded sequence of positive numbers such that $a_n x_n \to 0$ in $E$. We can assume that $a_n\geq 1$. Then there exists $N$ such that $a_n x_n\in U$ for any $n>N$. Since $U$ is balanced and
$a_n\geq 1$, we obtain that $x_n=\tfrac{1}{a_n}\cdot a_nx_n\in U$ for any $n>N$. Thus $x_n\to 0$, as desired.
\end{proof}

For locally convex spaces the property $(MK)$ is weaker than local completeness.

\begin{proposition} \label{p:lc-k-complete}
A locally complete lcs $E$ has the property $(MK)$.
\end{proposition}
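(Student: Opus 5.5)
Let $\{x_n\}_{n\in\w}$ be a Mackey null sequence in $E$ and fix an increasing unbounded sequence $\{a_n\}_{n\in\w}$ of positive numbers with $a_nx_n\to 0$. We must show that every subsequence of $\{x_n\}$ has a further subsequence whose series converges in $E$. Any subsequence $\{x_{n_k}\}$ is again Mackey null, witnessed by $\{a_{n_k}\}$. So it suffices to show that $\{x_n\}$ itself has a subsequence $\{y_k\}$ with $\sum_k y_k$ convergent.

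Since $a_n\to\infty$, we choose indices $n_0<n_1<\cdots$ with $a_{n_k}\ge 2^k$, and set $y_k=x_{n_k}$ and $z_k=a_{n_k}x_{n_k}$. Then $z_k\to 0$ in $E$, so $\{z_k\}$ is a null sequence. Writing $\lambda_k=1/a_{n_k}$, we have $y_k=\lambda_k z_k$ with $\sum_k\lambda_k\le\sum_k 2^{-k}=2<\infty$.

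The key step uses local completeness. Let $K$ be the closed absolutely convex hull of $\{z_k\}_{k\in\w}$. By the definition of local completeness recalled in the introduction, $K$ is compact. Put $S_m=\sum_{k\le m}\lambda_k z_k$ and $\Lambda=\sum_k\lambda_k\le 2$. For $m<m'$ we have
\[
S_{m'}-S_m=\sum_{m<k\le m'}\lambda_k z_k\in \Big(\sum_{m<k\le m'}\lambda_k\Big)K ,
\]
because $K$ is absolutely convex. Hence $\{S_m\}$ is Cauchy: for a $0$-neighborhood $U$, boundedness of $K$ gives $\varepsilon>0$ with $\varepsilon K\subseteq U$, and the tails $\sum_{k>m}\lambda_k$ eventually fall below $\varepsilon$. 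Moreover, all $S_m$ lie in the compact set $\Lambda K$. A Cauchy sequence lying in a compact set converges, since a compact set is complete in the uniform structure of $E$. Thus $\sum_k y_k$ converges in $E$.

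The only point needing care is this last completeness step: the convergence takes place inside $\Lambda K$, not in $E$ globally. An alternative route is to use the equivalent formulation that $E_K$, the span of $K$ normed by the Minkowski functional $p_K$, is a Banach space (one of the equivalent conditions in \cite[Proposition~5.1.6]{PB}). Since $p_K(\lambda_kz_k)\le\lambda_k$, the series is absolutely convergent in this Banach space. The inclusion of $E_K$ into $E$ is continuous, so the series also converges in $E$. Either route gives the conclusion, and hence $\{x_n\}$ is a $K$-sequence. Therefore $E$ has the property $(MK)$.
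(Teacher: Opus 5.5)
Your proof is correct and follows the paper's argument: pass to a subsequence with $a_{n_k}\ge 2^k$, write $x_{n_k}=\lambda_k z_k$ with $z_k\to 0$, and use local completeness to get that the closed absolutely convex hull $K$ of $\{z_k\}$ is compact. The paper finishes with the Banach space $E_K$, which is your alternative route. Your primary route (the partial sums are Cauchy and lie in the compact set $\Lambda K$) is an equally valid minor variant, and it avoids needing the fact that compact discs are Banach discs.
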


\begin{proof}
Let $S=\{x_n\}_{n\in\w}\subseteq E$ be a Mackey null sequence. To prove that $E$  has the property $(MK)$, we show that $S$ is a $K$-sequence. Let $\{z_i\}_{i\in\w} $ be an arbitrary subsequence of $S$. As $S$ is Mackey null, there exists an  increasing unbounded sequence  $\{a_n\}_{n\in\w}$ of positive numbers such that $a_nx_n \to 0$ in $E$. Therefore there is an increasing sequence  $(n_k)_{k\in\w}$ in $\w$ such that $a_{n_k}\geq 2^{k+1}$ and $\{x_{n_k}\}_{k\in\w}$ is a subsequence of $\{z_i\}_{i\in\w}$. It suffices to show that the series $\sum_{k\in\w} x_{n_k}$ converges in $E$.

Set $y_k:=a_{n_k} x_{n_k}$ and $\mu_k:=\tfrac{1}{a_{n_k}}$ for $k\in \w$. Then $x_{n_k} = \mu_k y_k$ and $|\mu_k|\leq \tfrac{1}{2^{k+1}}$ for $k\in \w$.
Let $K$ be the closed absolutely convex hull of $\{y_k\}_k$. Since $y_k\to 0$ and $E$ is a locally complete lcs, \cite[Theorem 2.10.21]{BS2017tvs} implies that $K$ is a compact absolutely convex subset of $E$.
Since $y_k\in K$ and $|\mu_k|\leq \tfrac{1}{2^{k+1}}$ for $k\in \w$, the sequence $\{\sum_{i\leq k} \mu_k y_k\}_{k\in\w}$ is fundamental in the Banach space $E_K$ (where $E_K$ denotes the span of $K$ endowed with the norm topology defined by the Minkowski functional of $K$). Therefore the series $\sum_{k\in\w} \mu_k y_k$ converges in $E_K$ and hence also in $E$.
\end{proof}

In the proof of Proposition \ref{p:lc-k-complete} we essentially used the local convexity of $E$.
\begin{problem}
Is it true that a (sequentially) complete tvs has the property $(MK)$?
\end{problem}

Using Proposition \ref{p:lc-k-complete} we show that the converse in Proposition \ref{p:K=>MK} is not true in general.

\begin{example} \label{exa:MK-not=K}
The complete locally convex space  $\IR^{\ell_2}$ has the property $(MK)$ but it does not have the property $(K)$.
\end{example}

\begin{proof}
Since $\IR^{\ell_2}$ is complete, Proposition \ref{p:lc-k-complete} implies that it has the property $(MK)$. On the other hand, consider the sequence $\{x_n\}_{n\in\w}$ in $\IR^{\ell_2}$ defined by
\[
x_n=\big(\xi(n)\big)_{\xi\in\ell_2} \quad (n\in\w).
\]
It is clear that it converges to zero in $\IR^{\ell_2}$. Let $\{x_{n_k} \}_{k\in\w}$ be an arbitrary subsequene of $\{x_n\}_{n\in\w}$. Take $\xi\in\ell_2$ such that
\[
\xi(n) = \begin{cases}
\frac{1}{k+1}, & \mbox{if } n = n_k\text{ for some }k\in\w,
\\
0, & \text{othewise}.
\end{cases}
\]
The choice of $\xi$ implies that the $\xi$-coordinate of the series $\sum_k x_{n_k}$ is infinite. Therefore $\sum_k x_{n_k}$ diverges. Thus $\IR^{\ell_2}$  does not have the property $(K)$.
\end{proof}

Proposition \ref{p:K=>MK} motivates the problem to find classes of topological vector spaces for which the property $(K)$ is equivalent to the property $(MK)$. If $E$ is a metrizable lcs, then every null sequence in $E$ is Mackey null \cite[Proposition 5.1.4]{PB}. Therefore, a metrizable lcs has the property $(K)$ if, and only if, it has the property $(MK)$. We shall significantly strengthen this statement in Proposition \ref{p:al4-MK=K} below.

Recall that a Tychonoff space $X$ has the {\em property $(\alpha_4)$} at a point $x\in X$  if for every family of sequences $\{\xi_n\}_{n\in\w}$ converging to $x$, there exists a sequence $\xi$ converging to $x$ such that $\xi_n\cap\xi\neq\emptyset$ for infinitely many $n\in\w$.
Following Arhangel'skii,  $X$ has the {\em property $(\alpha_4)$} if it has the property $(\alpha_4)$ at each point $x\in X$.
Each Fr\'{e}chet--Urysohn topological group has the property $(\alpha_4)$, see \cite[Theorem 4]{Nyikos1981}.

\begin{proposition} \label{p:al4-MK=K}
Let $E$ be a topological vector space with the property $(\alpha_4)$ {\rm(}for example, $E$ is Fr\'{e}chet--Urysohn{\rm)}. Then $E$ has the property $(K)$ if, and only if, it has the property $(MK)$.
\end{proposition}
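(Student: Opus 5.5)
One direction is already available: by Proposition \ref{p:K=>MK}, the property $(K)$ implies the property $(MK)$ in any tvs. So the plan is to prove the converse under $(\alpha_4)$. The idea is to show that every null sequence has a subsequence that is Mackey null, and then use $(MK)$.

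Let $\{x_n\}_{n\in\w}$ be a null sequence in $E$ and let $\{z_i\}_{i\in\w}$ be an arbitrary subsequence. We must find a further subsequence whose series converges. For each $m\in\w$, the sequence $\xi_m:=\{m z_i\}_{i\in\w}$ converges to $0$, since multiplication by the fixed scalar $m$ is continuous. Apply $(\alpha_4)$ at $0$ to the family $\{\xi_m\}_{m\in\w}$. This gives a sequence $\xi$ converging to $0$ that meets $\xi_m$ for infinitely many $m$. Choose such $m_1<m_2<\cdots$ and indices $i_k$ with $m_k z_{i_k}\in\xi$.

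The main obstacle is arranging that the indices $i_k$ are strictly increasing, so that $\{z_{i_k}\}$ is a genuine subsequence and $\{m_k z_{i_k}\}$ is a sequence extracted from $\xi$ converging to $0$.

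To handle this, I would first replace $\xi_m$ by the tail $\{m z_i\}_{i\geq m}$. This forces $i_k\geq m_k\to\infty$, so the indices $i_k$ are unbounded. Passing to a further subsequence of $k$, we may assume $i_k$ is strictly increasing while $m_k$ stays increasing and unbounded. Some care is needed because $\xi$ may repeat a point, for instance when $z_i=0$.

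I would dispose of the degenerate case first. If infinitely many $z_i$ vanish, then the subsequence of zeros has a trivially convergent series. Otherwise, discarding the zeros, the points $m z_i$ are nonzero. A fixed nonzero point $v$ can equal $m_k z_{i_k}$ for only finitely many $k$: since $i_k\to\infty$ and $z_{i_k}\to 0$, we get $z_{i_k}=v/m_k$ with $m_k\to\infty$, which is consistent. The real point, though, is that we only need the terms $m_k z_{i_k}$ to be points of the null sequence $\xi$ taken with distinct values, so that they still converge to $0$. Selecting $k$ so that the values $m_kz_{i_k}$ are pairwise distinct and are enumerated along $\xi$, the selected terms converge to $0$, because $\xi$ converges to $0$ and each value occurs at only finitely many positions in this selection.

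With $a_k:=m_k$, which is increasing and unbounded, we then have $a_k z_{i_k}\to 0$. Hence $\{z_{i_k}\}_k$ is Mackey null in the sense of Definition \ref{def:mc}. By $(MK)$ it is a $K$-sequence, so it has a subsequence $(y_n)_n$ with $\sum_n y_n$ convergent. This $(y_n)_n$ is a subsequence of $\{z_i\}_i$, so $\{x_n\}_n$ is a $K$-sequence, and $E$ has the property $(K)$.

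The parenthetical claim in the statement follows from \cite[Theorem 4]{Nyikos1981}, since every Fr\'{e}chet--Urysohn topological group has $(\alpha_4)$.
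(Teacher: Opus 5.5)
Your route is the paper's: scale the null sequence by unbounded scalars, apply $(\alpha_4)$ at $0$ to extract a Mackey null subsequence, then invoke $(MK)$. Using the tails $\xi_m=\{mz_i\}_{i\ge m}$ to force $i_k\to\infty$ correctly settles the monotonicity of the indices, which the paper's one-line proof leaves implicit.

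The gap is in how you handle repeated values. You claim that a fixed nonzero $v$ can equal $m_kz_{i_k}$ for only finitely many $k$. This is false, and your own justification ends in ``which is consistent'', not in a contradiction. Consequently the step ``select $k$ so that the values $m_kz_{i_k}$ are pairwise distinct'' can be impossible. For example, let $v\neq 0$ and $z_i=v/i$ for $i\ge 1$. Then $v=mz_m\in\xi_m$ for every $m$. The sequence $\xi=(v,-v,-v/2,-v/3,\dots)$ converges to $0$ and meets every $\xi_m$ exactly in $\{v\}$. Since $(\alpha_4)$ is allowed to return this $\xi$, every admissible choice gives $m_kz_{i_k}=v\not\to 0$. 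The paper's terse argument with $\{2^kx_n\}_n$ does not address this point either, but your text asserts something untrue to get past it.

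The repair is a case split.
\begin{enumerate}
\item[(a)] Suppose some point $v$ of $\xi$ lies in $\xi_m$ for infinitely many $m$. Then $v\neq 0$, since you discarded zeros, and $z_{i(m)}=v/m$ with $i(m)\ge m$ for these $m$. Choose $m_1<m_2<\cdots$ among them with $i(m_k)$ strictly increasing. The subsequence $(z_{i(m_k)})_k$ is then Mackey null directly, because $\sqrt{m_k}\,z_{i(m_k)}=v/\sqrt{m_k}\to 0$.
\item[(b)] Otherwise every point of $\xi$ lies in only finitely many $\xi_m$. Choose inductively $m_k>m_{k-1}$ with $\xi\cap\xi_{m_k}\neq\emptyset$ and with $\xi_{m_k}$ avoiding the previously chosen points $w_1,\dots,w_{k-1}$, and pick $w_k\in\xi\cap\xi_{m_k}$. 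The $w_k$ are pairwise distinct points of the range of a null sequence, so $w_k\to 0$, and the rest of your argument goes through.
\end{enumerate}

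Alternatively, you can avoid the split by replacing the multipliers $m$ with $c_m\in[m,m+1)$. Choose them inductively so that no ratio $c_m/c_{m'}$ with $m\neq m'$ lies in the countable set $\{r: z_{i'}=rz_i \text{ for some } i,i'\}$. Then the $\xi_m$ are pairwise disjoint, so any $\xi$ meeting infinitely many of them does so in distinct points.
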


\begin{proof}
Taking into account Proposition \ref{p:K=>MK}, we have to prove only that if $E$ has the property $(MK)$, then it has  the property $(K)$. To this end, it suffices to prove that each null sequence $\xi=\{x_n\}_{n\in\w}$ contains a Mackey null subsequence. For every $k\in\w$, set $\xi_k=\{ 2^k x_n\}_{n\in \w}$. The property $(\alpha_4)$ implies that there are sequences $(k_m)_m$ and $(n_m)_m$ in $\w$ such that $\{2^{k_m} x_{n_m}\}_{m\in \w}$ is a null sequence. Then $\{x_{n_m}\}_{m\in \w}$ is a Mackey null sequence.
\end{proof}

The following assertion follows from Propositions \ref{p:lc-k-complete} and \ref{p:al4-MK=K}.
\begin{corollary} \label{c:FU+sc=>K}
Every locally {\rm(}in particular, sequentially{\rm)} complete Fr\'{e}chet--Urysohn lcs have the property $(K)$.
\end{corollary}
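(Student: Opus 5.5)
This is an immediate combination of two earlier results, so the plan is simply to chain Proposition \ref{p:lc-k-complete} with Proposition \ref{p:al4-MK=K}.

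Let $E$ be a locally complete Fr\'{e}chet--Urysohn lcs.

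First, by Proposition \ref{p:lc-k-complete}, $E$ has the property $(MK)$.

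Second, I need $E$ to have the property $(\alpha_4)$. Every Fr\'{e}chet--Urysohn topological group has $(\alpha_4)$ by \cite[Theorem 4]{Nyikos1981}, and $E$, with its addition, is such a group. So Proposition \ref{p:al4-MK=K} applies, and its nontrivial direction $(MK)\Rightarrow(K)$ shows that $E$ has the property $(K)$.

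For the parenthetical "in particular, sequentially" case, I would use the fact recalled in the paper that every sequentially complete lcs is locally complete; that case then reduces to the one above.

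There is no real obstacle. The only point to check is that Nyikos' theorem may be applied to the additive topological group of $E$, and this is immediate.
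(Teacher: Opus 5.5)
Your proposal is correct and is the paper's argument: the corollary combines Proposition~\ref{p:lc-k-complete} (local completeness gives $(MK)$) with Proposition~\ref{p:al4-MK=K}. The $(\alpha_4)$ hypothesis of the latter holds for Fr\'{e}chet--Urysohn topological vector spaces by Nyikos' theorem, and the sequentially complete case reduces to the locally complete one.
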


\begin{problem}\label{prob:FU+sc=>K}
Is it true that every sequentially complete Fr\'{e}chet--Urysohn topological vector space has the property $(K)$?
\end{problem}
In the special important case when the space is metrizable, the answer to Problem \ref{prob:FU+sc=>K} is positive (see \cite[Observation 1.2.16(a)]{PB}).
\begin{proposition}[\cite{PB}] \label{p:metr+c=>K}
Every  complete metrizable tvs has the property $(K)$.
\end{proposition}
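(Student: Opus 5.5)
To prove that every complete metrizable tvs $E$ has the property $(K)$, I will fix a null sequence $(x_n)_n$ in $E$ and an arbitrary subsequence $(z_i)_i$ of it. The goal is to extract a further subsequence $(y_k)_k$ whose partial sums form a Cauchy sequence in $E$. Completeness of $E$ then gives convergence of $\sum_k y_k$, which is exactly the $K$-sequence condition of Definition~\ref{def:ks}.

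\textbf{Step 1: a basis of neighborhoods.} Since $E$ is metrizable, it has a countable base $\{U_k\}_{k\in\w}$ of balanced neighborhoods of zero. Using continuity of addition, I choose the base so that $U_{k+1}+U_{k+1}\subseteq U_k$ for every $k$. By induction on $m$ this yields, for all $k$ and $m\ge 1$,
\[
U_{k+1}+U_{k+2}+\cdots+U_{k+m}\subseteq U_k .
\]
The induction is the usual telescoping: $U_{k+m-1}+U_{k+m}\subseteq U_{k+m-1}+U_{k+m-1}\subseteq U_{k+m-2}$, and so on down to $U_k$.

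\textbf{Step 2: choosing the subsequence.} Since $z_i\to 0$, I can pick indices $i_0<i_1<\cdots$ with $y_k:=z_{i_k}\in U_{k}$ for each $k$. For $p<q$ the block of partial sums then satisfies
\[
\sum_{k=p+1}^{q} y_k\in U_{p+1}+\cdots+U_q\subseteq U_p .
\]
The sets $U_p$ form a base at zero, so the partial sums $s_q=\sum_{k\le q}y_k$ are Cauchy in the uniform structure of $E$. This uniform structure coincides with the one given by a translation-invariant metric, which exists by the Birkhoff--Kakutani theorem. Completeness of $E$ therefore gives convergence of $\sum_k y_k$.

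There is no real obstacle here; the whole content is Step 1. Alternatively, one can work directly with an invariant metric $d$ and choose $d(y_k,0)<2^{-k}$. Then the triangle inequality and invariance give $d(s_q,s_p)\le\sum_{k=p+1}^q 2^{-k}$, which is a more concise route to the same Cauchy estimate.
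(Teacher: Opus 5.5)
Your proof is correct. Choosing $y_k\in U_k$ from a base satisfying $U_{k+1}+U_{k+1}\subseteq U_k$ (equivalently, $d(y_k,0)<2^{-k}$ for an invariant metric $d$) makes the partial sums Cauchy, and completeness then gives convergence of $\sum_k y_k$. The paper gives no proof of its own and simply cites Observation 1.2.16(a) of \cite{PB}; as far as I know, the argument there is this same standard subsequence-plus-geometric-series estimate, so there is no different route to compare.
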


Recall that a tvs $E$ is {\em quasi-complete} if any closed bounded subset of $E$ is complete. Each complete tvs is quasi-complete,  each quasi-complete tvs is sequentially complete, and each sequentially complete lcs is locally complete, see \cite{Jar}. Recall also that a tvs $E$ is called {\em feral} if every bounded subset of $E$ is finite-dimensional.

A subset $A$ of a Tychonoff space $X$ is {\em functionally bounded} if $f(A)$ is bounded for every $f\in C(X)$.
Recall that  $X$ is a {\em $\mu$-space} if any functionally bounded subset of $X$ is relatively compact.

Below we give a sufficient condition on a tvs to have the property $(K)$.

\begin{proposition} \label{p:feral-K}
Let $E$ be a feral topological vector space. Then
\begin{enumerate}
\item[{\rm (i)}] $E$ is quasi-complete {\rm(}hence sequentially complete{\rm)};
\item[{\rm (ii)}] $E$ is a $\mu$-space;
\item[{\rm (iii)}] $E$ has  the property $(K)$.
\end{enumerate}
\end{proposition}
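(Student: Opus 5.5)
The whole argument rests on one fact: in a feral tvs $E$ every bounded set $B$ lies in a finite-dimensional subspace $F=\operatorname{span}(B)$. Every finite-dimensional subspace of a Hausdorff tvs is closed and carries its unique Euclidean vector topology. Moreover, the closure of a bounded set is again bounded, so it also lies in a finite-dimensional subspace. We then derive (i), (ii) and (iii) in turn by reducing each one to this Euclidean subspace.

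For (i), let $B\subseteq E$ be closed and bounded, and let $F$ be the finite-dimensional subspace spanned by $B$. Then $F$ is closed in $E$ and, with the induced topology, is topologically isomorphic to some $\IR^m$ (or $\mathbb{C}^m$). Take a Cauchy net in $B$. It is a Cauchy net in $F$, and the uniform structure induced on $F$ is the Euclidean one, so the net converges in $F$. Since $B$ is closed, the limit lies in $B$. Hence $B$ is complete and $E$ is quasi-complete. Sequential completeness follows because a Cauchy sequence is bounded, so its closure is a closed bounded set, to which we apply the same argument.

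For (ii), let $A$ be functionally bounded. Then $A$ is bounded in the tvs sense. Indeed, if not, there are a balanced neighborhood $U$ of $0$, points $a_n\in A$ and scalars $t_n\to0$ with $t_na_n\notin U$. Passing to a subsequence we may assume the $a_n$ leave every finite-dimensional bounded piece, and then we build a continuous function unbounded on $A$, using the fact that a tvs is Tychonoff and uniform via a continuous $F$-seminorm. A more direct route is to use Kalton's/standard fact that functionally bounded subsets of a tvs are bounded, since each continuous $F$-seminorm $q$ together with the maps $x\mapsto q(x)$ gives continuous functions. If $A$ were unbounded, one produces a discrete sequence and a continuous function unbounded on it. I expect this step, functionally bounded $\Rightarrow$ bounded, to be the main obstacle, and I would first try to cite it as a standard fact. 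Once $A$ is bounded, it lies in a finite-dimensional closed subspace $F\cong\IR^m$ and is bounded there. So $\overline{A}$ is a closed bounded subset of $\IR^m$, which is compact, and $E$ is a $\mu$-space.

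For (iii), let $(x_n)$ be a null sequence. It is bounded, so it lies in a finite-dimensional subspace $F\cong\IR^m$, where it is null in the norm. Given any subsequence, choose a further subsequence $(y_k)$ with $\|y_k\|\le 2^{-k}$. Then $\sum_k y_k$ converges absolutely in $F$, and hence converges in $E$, since $F$ carries the subspace topology. Thus every null sequence is a $K$-sequence.
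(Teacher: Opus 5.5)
Your proposal is correct and follows the paper's route. Each part is reduced to the closed, Euclidean, finite-dimensional span of a bounded set, and (ii) rests on the fact that functionally bounded subsets of a tvs are bounded, which the paper likewise simply asserts; your absolute-convergence argument in (iii) is just the finite-dimensional case of the cited result that complete metrizable spaces have property $(K)$.

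One caution: your parenthetical justification via continuous $F$-seminorms does not prove that fact, because every tvs topology is generated by $F$-seminorms bounded by $1$ (replace $q$ by $\min(q,1)$), so boundedness of $F$-seminorms on $A$ carries no information. If you want a proof rather than a citation, use two standard facts. First, functionally bounded subsets of a topological group are precompact: a non-precompact $A$ contains a $U$-separated sequence $(b_n)$, which gives a discrete family $\{b_n+V\}_n$ and hence a continuous function unbounded on $A$. Second, precompact subsets of a tvs are bounded.
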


\begin{proof}
(i) follows from the fact that a closed bounded subset of $E$ is even compact.

(ii) follows from the fact that any functionally bounded subset of $E$ is bounded.

(iii) Let $\xi=\{x_n\}_{n\in\w}$ be a null sequence in $E$. Since $\xi$ is bounded and $E$ is feral, $\xi$ lies in some finite-dimensional subspace $L\subseteq E$. By Proposition \ref{p:metr+c=>K}, the subspace $L$ has property $(K)$. Therefore $\xi$ is a $K$-sequence.
\end{proof}

The following statement shows that it is consistent  that the answer to Problem \ref{prob:FU+sc=>K} is positive.
\begin{theorem} \label{t:FU_sc_tvs=>K}
It is consistent that each sequentially complete Fr\'{e}chet--Urysohn tvs $E$ has the property $(K)$.
\end{theorem}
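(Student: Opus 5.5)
We work in a model of ZFC in which every Fréchet--Urysohn topological group (equivalently, every Fréchet--Urysohn tvs) of countable tightness at the origin is in fact metrizable, or at least has a countable local base at $0$. The relevant consistency result is due to Hrušák and Ramos-García: it is consistent with ZFC that every separable Fréchet--Urysohn topological group is metrizable. The overall strategy is to reduce to the separable case, invoke this consistency result, and then apply Proposition \ref{p:metr+c=>K}.

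\textbf{Step 1 (reduction to the separable case).} Let $E$ be a sequentially complete Fréchet--Urysohn tvs and let $\xi=\{x_n\}_{n\in\w}$ be a null sequence. We must find, inside any given subsequence, a further subsequence with convergent series. Let $L$ be the closed linear span of $\xi$. Then $L$ is a separable tvs, since the rational linear combinations of the $x_n$ are dense in $L$. It is Fréchet--Urysohn, because this property is inherited by subspaces. It is also sequentially complete: $L$ is closed, so a Cauchy sequence in $L$ converges in $E$ to a point of $L$.

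\textbf{Step 2 (metrizability).} In the Hrušák--Ramos-García model, $L$ is metrizable as a separable Fréchet--Urysohn topological group. Since $L$ is metrizable and sequentially complete, it is complete: for metrizable tvs, completeness coincides with sequential completeness, as the uniformity has a countable base.

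\textbf{Step 3 (conclusion).} By Proposition \ref{p:metr+c=>K}, $L$ has property $(K)$. Hence $\xi$ is a $K$-sequence in $L$, and convergence of the relevant series in $L$ gives convergence in $E$. Since $\xi$ was an arbitrary null sequence, $E$ has property $(K)$.

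\textbf{Main obstacle.} The difficulty lies entirely in Step 2, specifically in locating and correctly citing the consistency theorem. Its precise statement is that it is consistent that every separable Fréchet--Urysohn topological group is metrizable; it was proved by forcing in Hrušák--Ramos-García, ``Malykhin's problem'', answering Malykhin's question. One must check that the theorem applies to groups that are not assumed countable, only separable. If only the countable version is available, I would instead take the countable subgroup, or the $\mathbb{Q}$-linear span $D$ of $\xi$. If $D$ is metrizable, then $L=\overline{D}$ has a countable local base at $0$, since closures of the traces of a countable base of $D$ form a base of $L$ by regularity. So $L$ is metrizable in either case, and the rest of the argument proceeds unchanged.
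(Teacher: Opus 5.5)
Your proposal is correct and is essentially the paper's own proof. You pass to a closed separable linear subspace $L$ containing the null sequence (you take the closed linear span and explicitly check separability, Fr\'echet--Urysohnness and sequential completeness), use the Hru\v{s}\'ak--Ramos-Garc\'{\i}a model to make $L$ metrizable and hence complete, and conclude by Proposition~\ref{p:metr+c=>K}. One caveat is that your opening sentence overstates the model: no model of ZFC makes every Fr\'echet--Urysohn tvs first countable, since the $\Sigma$-product in $\IR^{\omega_1}$ is a ZFC counterexample. Your argument, however, only uses the correct separable version, and your fallback via the countable $\mathbb{Q}$-linear span is also sound.
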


\begin{proof}
It is known (see \cite{HrusakRamos-Garcia2014}) that there is a $\mathrm{ZFC}$ model in which each Fr\'{e}chet--Urysohn separable topological group is metrizable. Let us show that $E$ has the property $(K)$. Let $\{x_n\}_{n\in\w}$ be a null sequence in $E$, and let $L\subseteq E$ be a closed separable linear subspace that contains the sequence $\{x_n\}_{n\in\w}$. Then $L$ is metrizable. Since $L$ is sequentially complete and metrizable, it follows that $L$ is complete (\cite[Proposition~3.2.2]{Jar}). Therefore, by Proposition \ref{p:metr+c=>K},  $L$ has the property $(K)$. Thus, $\{x_n\}_{n\in\w}$ is a $K$-sequence.
\end{proof}

Example \ref{exa:MK-not=K} shows that there are locally complete (even complete) locally convex spaces without the property $(K)$. On the other hand, in \cite{La-Li} it is shown that there are non-complete metrizable locally convex spaces with the property $(K)$.
Taking into account that a metrizable lcs is locally complete if, and only if, it is complete (\cite[Corollary~5.1.9]{PB}), the obtained results show that, in the realm of all locally convex spaces,  the  following diagram holds (in which $FU$= Fr\'{e}chet--Urysohn):

\[
\xymatrix{
\mbox{Fr\'{e}chet}  \ar@{=>}[r] & \mbox{FU with $(K)$}  \ar@{=>}[r] \ar@{<=>}[d]  & \mbox{$(K)$} \ar@{=>}[d] \ar@/^/[rd]|-{\setminus} \ar@/_1pc/[l]|-{\setminus} & \mbox{complete}  \ar@{=>}[d] \ar@/_1pc/[l]|-{\setminus}  \\
& \mbox{FU with $(MK)$}  \ar@{=>}[r]  & \mbox{$(MK)$}\ar@/^1pc/[r]|-{\setminus} & \mbox{locally complete} \ar@{=>}[l]  }
\]

In the next assertion we show that spaces with the property $(MK)$ are (closely) hereditary. Recall that a subspace $Y$ of a Tychonoff space $X$ is {\em sequentially closed} if for every sequence $\{y_n\}_{n\in\w}\subseteq Y$ converging to some $x\in X$, it follows that $x\in Y$.

\begin{proposition} \label{p:MK-subspace}
Let $L$ be a sequentially closed linear subspace of a topological vector space $E$. If $E$ has the property $(MK)$, then also $L$ has  the property $(MK)$.
\end{proposition}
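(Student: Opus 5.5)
The plan is to unwind the definitions directly. The subspace topology on $L$ interacts well with both notions: convergence in $L$ is just convergence in $E$ of points of $L$, and the sequential closedness of $L$ lets us bring limits back into $L$.

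\emph{Step 1: Mackey null in $L$ implies Mackey null in $E$.} Let $\{x_n\}_{n\in\w}$ be a Mackey null sequence in $L$. Then there is an increasing unbounded sequence $\{a_n\}_{n\in\w}$ of positive numbers with $a_nx_n\to 0$ in $L$. Since $L$ carries the topology induced from $E$, also $a_nx_n\to 0$ in $E$. Hence $\{x_n\}_{n\in\w}$ is Mackey null in $E$.

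\emph{Step 2: extracting convergent subseries.} As $E$ has the property $(MK)$, the sequence $\{x_n\}_{n\in\w}$ is a $K$-sequence in $E$. It is null in $E$, and hence null in $L$ with respect to the subspace topology. Let $\{z_i\}_{i\in\w}$ be an arbitrary subsequence. The $K$-property in $E$ gives a further subsequence $\{y_k\}_{k\in\w}$ such that $\sum_k y_k$ converges in $E$ to some $s\in E$.

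\emph{Step 3: the sum lies in $L$.} The partial sums $s_m=\sum_{k\le m}y_k$ belong to $L$, since $L$ is a linear subspace. They form a sequence in $L$ converging to $s$ in $E$. Because $L$ is sequentially closed, $s\in L$. Convergence $s_m\to s$ in $E$ with all terms in $L$ is exactly convergence in $L$, so the series converges in $L$. Therefore $\{x_n\}_{n\in\w}$ is a $K$-sequence in $L$, and $L$ has the property $(MK)$.

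There is no serious obstacle here. The only point needing care is Step 3: sequential closedness, rather than full closedness, suffices precisely because the sum of a series is defined as a limit of a sequence.
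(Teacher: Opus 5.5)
Your proposal is correct and follows essentially the same argument as the paper. Both pass from Mackey null in $L$ to Mackey null in $E$, use the property $(MK)$ of $E$ to extract a subseries converging in $E$, and then apply sequential closedness of $L$ to the partial sums to get convergence in $L$. Your explicit check that the sequence is null in $L$, as the definition of a $K$-sequence requires, is a small detail the paper leaves implicit.
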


\begin{proof}
Let $S=\{x_n\}_{n\in\w}$ be a Mackey null sequence in $L$. Then $S$ is a Mackey null sequence in $E$. Therefore $S$ is a $K$-sequence in $E$. To show that $S$ is a $K$-sequence in $L$, let $(y_n)$ be a subsequence of $S$. Choose a subsequence $(z_k)$ of $(y_k)$ such that the series $\sum_k z_k$ is convergent in $E$. Since all $z_k$ are in $L$ and $L$ is sequentially closed, it follows that $\sum_k z_k$ is convergent in $L$. Thus $S$ is a $K$-sequence in $L$, and hence $L$ has  the property $(MK)$.
\end{proof}

In the proof of the main results of the article (Theorem \ref{t:kFU-MK-dense-Baire}) we need the following two more specific notions.

\begin{definition}\label{def:rMK-property}
A subset $A$ of a tvs $E$ is said to have a {\em relative property $(MK)$} if every Mackey null sequence in $A$ is a $K$-sequence in $E$.
\end{definition}

In particular, if $A=E$, we obtain the property $(MK)$ defined in Definition \ref{def:MK-property}.

\begin{definition}
A subspace $S$ of a uniform space $X$ is said to be {\em relatively sequentially complete} if every Cauchy sequence in $S$ converges to an element of $X$.
\end{definition}

\begin{proposition}\label{p:rsc-rksc}
A relatively sequentially complete  subset $L$ of a locally convex space $E$ has the relative property $(MK)$.
\end{proposition}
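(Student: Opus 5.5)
Take a Mackey null sequence $S=\{x_n\}_{n\in\w}\subseteq L$ and an arbitrary subsequence $\{z_i\}_{i\in\w}$ of it; it suffices to find a further subsequence whose series converges in $E$. As in the proof of Proposition \ref{p:lc-k-complete}, fix an increasing unbounded sequence $\{a_n\}_{n\in\w}$ of positive numbers with $a_nx_n\to 0$, and choose $n_0<n_1<\cdots$ such that each $x_{n_k}$ is a term of $\{z_i\}_{i\in\w}$ and $a_{n_k}\ge 2^{k+1}$. I will show that the series $\sum_k x_{n_k}$ converges in $E$.

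The key step is to check that the partial sums $s_m:=\sum_{k\le m} x_{n_k}$ form a Cauchy sequence in $E$. Let $U$ be an absolutely convex neighborhood of zero; here local convexity is used. Since $y_k:=a_{n_k}x_{n_k}\to 0$, there is $N$ such that $y_k\in U$ for all $k\ge N$. For $N\le m<p$ we get
\[
s_p-s_m=\sum_{k=m+1}^{p}\frac{1}{a_{n_k}}\,y_k,\qquad \sum_{k=m+1}^{p}\frac{1}{a_{n_k}}\le \sum_{k>m}2^{-k-1}\le 1 .
\]
Hence $s_p-s_m$ is an absolutely convex combination of elements of $U$, scaled by a coefficient of total modulus at most $1$. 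Because $U$ is absolutely convex, $s_p-s_m\in U$. Thus $(s_m)$ is Cauchy.

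It remains to land in $E$. Each $s_m$ is a finite sum of elements of $L$. If $L$ is a linear subspace, then all $s_m$ lie in $L$, so $(s_m)$ is a Cauchy sequence in $L$. Relative sequential completeness then gives that $(s_m)$ converges to a point of $E$. Therefore $\sum_k x_{n_k}$ converges in $E$, so $S$ is a $K$-sequence in $E$ and $L$ has the relative property $(MK)$.

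The main obstacle is that the statement speaks of a subset $L$, while the partial sums need not stay in an arbitrary subset. I would handle this by reading ``relatively sequentially complete'' as applying to the linear span, or by assuming $L$ is a subspace, which is the case needed later (a dense subspace in Theorem \ref{t:kFU-MK-dense-Baire}). If $L$ really is only a subset, I would try to replace $L$ by a set closed under the finite sums used, for example $L$ absolutely convex; I would flag this hypothesis explicitly in the write-up.
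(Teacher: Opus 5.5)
Your main argument is correct for $L$ closed under finite sums, e.g.\ a linear subspace; the paper's proof tacitly needs the same hypothesis. It takes a different route from the paper.

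\textbf{Comparison of routes.} The paper passes to the completion $\widehat E$. Since $\widehat E$ is locally complete, Proposition~\ref{p:lc-k-complete} makes $(x_n)$ a $K$-sequence in $\widehat E$, so some subseries converges in $\widehat E$. Its partial sums are then Cauchy in $E$ and lie in $L$, and relative sequential completeness puts the limit in $E$.

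You prove the Cauchy property of the subseries directly in $E$. You use only the estimate $\sum_{k>m}1/a_{n_k}\le 2^{-m-1}$ and the fact that absolutely convex neighbourhoods of zero form a base.

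The paper's version is a two-line reduction to an earlier result. However, it relies on the existence of $\widehat E$ and, through Proposition~\ref{p:lc-k-complete}, on compactness of closed absolutely convex hulls of null sequences and on the Banach disc $E_K$. Your version is self-contained. It also shows that local convexity is used only to make the chosen subseries Cauchy, and completeness enters only at the last step, relative to $L$.

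\textbf{The word ``subset''.} You are right to worry about it. The paper's proof uses the same hypothesis when it writes $\{\sum_{i\le n}y_i\}_{n\in\w}\subseteq L$, and without it the statement is false. In the space of finitely supported sequences with the $\ell_1$-norm, take $L=\{2^{-n}e_n\}_{n}\cup\{0\}$. It is compact, hence relatively sequentially complete, and $(2^{-n}e_n)_n$ is Mackey null. But no subseries converges in that space.

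\textbf{Your fallback fails.} Assuming ``$L$ absolutely convex'' does not help, because absolutely convex sets are not closed under addition, and the statement fails for them too. Take $E=\{v\in\ell_1:\sum_n 2^n|v_n|<\infty\}$ with the $\ell_1$-norm, and $L=\{v\in\ell_1:\sum_n 2^n|v_n|\le 1\}$. Then $L\subseteq E$ is absolutely convex and compact, and $x_n=2^{-n}e_n\in L$ is Mackey null. Yet every subseries $\sum_k x_{n_k}$ converges in $\ell_1$ to a vector outside $E$.

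The correct hypothesis is that $L$ is closed under finite sums, for instance a linear subspace. This is exactly the case used in Theorem~\ref{t:kFU_rsc-Baire}.
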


\begin{proof}
Let $\widehat E$ be a completion of $E$. Let $(x_n)_{n}\subseteq L$ be a Mackey null sequence. Then $(x_n)_{n}$ is Mackey null also in $\widehat E$. Since, by Proposition \ref{p:lc-k-complete},  $\widehat E$ has the property  $(MK)$, $(x_n)_{n}$ is a $K$-sequence in $\widehat E$. To show that $(x_n)_{n}$ is a $K$-sequence also in $E$, let $(z_m)_m$ be an arbitrary subsequence of  $(x_n)_{n}$. Then there is a subsequence  $(y_n)_{n}$ of  $(z_m)_{m}$ such that the series $\sum_n y_n$ converges in $\widehat E$. Since $L$ is a relatively sequentially complete subset of $E$ and the sequence $\big\{\sum_{i\leq n} y_i\big\}_{n\in\w}\subseteq L$ is Cauchy in $E$, the series $\sum_n y_n$ converges in $E$. Thus $(x_n)_{n}$ is a $K$-sequence in $E$.
\end{proof}


\section{Baire property of $\kappa$-Fr\'{e}chet--Urysohn topological vector spaces  } \label{sec:kFU-Baire1}


We start this section with the next characterization of tvs which are $\kappa$-Fr\'{e}chet--Urysohn.

\begin{theorem} \label{t:kFU-group-charac}
For a topological vector space $E$, the following assertions are equivalent:
\begin{enumerate}
\item[{\rm(i)}] for every sequence $\{U_n\}_{n\in\w}$ of open subsets of $E$ such that $0\in \overline{U_n}$ for every $n\in\w$, there are a strictly increasing sequence $(n_k)\subseteq \w$ and a sequence $\{x_k\}_{k\in\w}$ in $E$ such that $x_k\in U_{n_k}$ $(k\in\w)$ and $x_k\to 0$;
\item[{\rm(ii)}] $E$ is a $\kappa$-Fr\'{e}chet--Urysohn space.
\end{enumerate}
\end{theorem}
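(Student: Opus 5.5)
The implication (i)$\Rightarrow$(ii) is essentially immediate. Let $U$ be open and $x\in\overline{U}$. By translation invariance we may take $x=0$: replace $U$ by the open set $U-x$, which has $0$ in its closure. Now apply (i) to the constant sequence $U_n:=U-x$. This gives points $x_k\in U-x$ with $x_k\to 0$, so $x+x_k\in U$ and $x+x_k\to x$. Hence $E$ is $\kappa$-Fr\'{e}chet--Urysohn.

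For (ii)$\Rightarrow$(i) the idea is to glue the open sets $U_n$ into a single open set that still has $0$ in its closure, and to use the vector structure to keep track of the index $n$. Fix a countable decreasing sequence of scalars converging to zero? That alone does not encode $n$. Instead I will first try the following. Fix a nonzero vector $e\in E$; if $E=\{0\}$ the statement is trivial. Consider
\[
W:=\bigcup_{n\in\w}\Big(U_n+\tfrac{1}{n+1}e\Big)\cap V_n ,
\]
where the $V_n$ are suitable neighborhoods of $\tfrac{1}{n+1}e$ chosen to separate the pieces. Each $\tfrac{1}{n+1}e$ lies in the closure of $U_n+\tfrac{1}{n+1}e$, so every point of $\{\tfrac{1}{n+1}e\}$ is in $\overline{W}$, and therefore $0\in\overline{W}$. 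By (ii) there is a sequence $w_k\in W$ with $w_k\to 0$. Write $w_k=u_k+\tfrac{1}{m_k+1}e$ with $u_k\in U_{m_k}$.

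The crux, and the main obstacle, is to guarantee two things. First, that $m_k\to\infty$; then passing to a subsequence gives a strictly increasing $(n_k)$. Second, that $u_k=w_k-\tfrac{1}{m_k+1}e\to 0$. Once $m_k\to\infty$, the second point is automatic, since $\tfrac{1}{m_k+1}e\to 0$.

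To force $m_k\to\infty$, I choose $W$ so that its part coming from any fixed finite set of indices stays away from $0$. Since $E$ is Hausdorff, I use a continuous linear-like device. Take a closed balanced neighborhood $V$ of $0$ with $e\notin V+V$. Then intersect the $n$th piece with $\tfrac{1}{n+1}\big(e+\operatorname{Int}V\big)$. This set is open and contains $\tfrac{1}{n+1}e$. For fixed $n$, this set is bounded away from $0$: if $0$ were in the closure of $\tfrac{1}{n+1}(e+\operatorname{Int}V)$, then $-e\in V$, i.e.\ $e\in V\subseteq V+V$, a contradiction. Hence, for each fixed $N$, the finite union of the first $N$ pieces has closure missing $0$. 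So only finitely many $w_k$ can lie in it, which yields $m_k\to\infty$.

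It remains to check that $\tfrac{1}{n+1}e$ is still in the closure of the intersected piece. This holds because $\tfrac{1}{n+1}e+\big(U_n\cap \tfrac{1}{n+1}\operatorname{Int}V\big)$ is nonempty near $\tfrac{1}{n+1}e$: the point $0$ is in the closure of $U_n$, and $\tfrac{1}{n+1}\operatorname{Int}V$ is a neighborhood of $0$. Then, as shown above, $0\in\overline{W}$ and the argument closes.
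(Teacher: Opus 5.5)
Your proof is correct and follows the paper's argument. The paper also glues the open pieces $(p_n+U_n)\cap(p_n+W_n)$, where $0\notin\overline{p_n+W_n}$, into a single open set with $0$ in its closure, takes a null sequence in that set, shows the indices are unbounded, and subtracts $p_{n_k}$. The only difference is the auxiliary null sequence: you build $p_n=\tfrac{1}{n+1}e$ and $W_n=\tfrac{1}{n+1}\operatorname{Int}V$ explicitly by scalar multiplication, whereas the paper extracts an injective null sequence $p_n\neq 0$ by applying $\kappa$-Fr\'{e}chet--Urysohnness to the open set $E\setminus\{0\}$, so its construction uses only the additive group structure.
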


\begin{proof}
(i)$\Ra$(ii) Let $U$ be an open subset of $E$ such that $0\in \overline{U}\SM U$. For every $n\in\w$, set $U_n:=U$. Then (ii) immediately follows from (i).
\smallskip

(ii)$\Ra$(i) Let $\{U_n: n\in \w\}$  be a sequence of open subsets of $E$ such that $0\in \overline{U_n}$ for every $n\in\w$. Choose an arbitrary one-to-one sequence $\{p_n\}_{n\in\w}$ in $E\SM \{0\}$  converging to $0$ (such a sequence exists by the $\kappa$-Fr\'{e}chet--Urysohness applying to the open set $E\SM\{0\}$).  Choose a sequence $\{W_n: n\in \w\}$ of open neighborhoods of $0$ such that
\begin{equation} \label{equ:kFU-group-charac-1}
0\not\in \overline{p_n +W_n}
\end{equation}
for every $n\in \w$. For each $n\in\w$, set
\begin{equation} \label{equ:kFU-group-charac-2}
\mathcal{O}_n=(p_n+U_n)\cap (p_n+W_n).
\end{equation}
We claim that $0\in \overline{\bigcup_{n\in\w} \mathcal{O}_n}$.  Indeed, let $V$ be a neighborhood of $0$. Choose a neighborhood $V_1$ of $0$ such that $V_1+ V_1\subseteq U$. Since $p_n\to 0$, there is an $m\in\w$ such that $p_n\in V_1$ for each $n\geq m$.  As $0\in \overline{U_m}$, there is a point $s_m\in U_m$ such that $s_m\in V_1\cap W_m$. Then $p_m+ s_m\in V_1+ V_1\subseteq V$ and $p_m+ s_m\in \mathcal{O}_m$.
\smallskip

Since $E$ is $\kappa$-Fr\'{e}chet--Urysohn, there is a sequence $(z_k)\subseteq \bigcup \{\mathcal{O}_n: n\in\w\}$ converging to $0$. For every $k\in\w$, choose an index $n_k$ such that $z_k\in \mathcal{O}_{n_k}$. The sequence $(n_k)$ cannot be bounded because of (\ref{equ:kFU-group-charac-1}) and the inclusion $\mathcal{O}_{n_k} \subseteq p_{n_k}+W_{n_k}$. Passing to a subsequence if needed, we can assume that $(n_k)$ is strictly increasing.  For every $k\in\w$, set $x_k:= z_k -p_{n_k} $. Then, by (\ref{equ:kFU-group-charac-2}), $x_k\in U_{n_k}$ and, clearly, $x_k\to 0$. Thus (i) is satisfied.
\end{proof}

The following theorem is the main result of this paper, it immediately implies Theorem \ref{t:MK-Baire}.

\begin{theorem} \label{t:kFU-MK-dense-Baire}
If a topological vector space $E$ contains a dense $\kappa$-Fr\'{e}chet--Urysohn linear subspace $L$ with the relative property $(MK)$, then $E$ is a Baire space.
\end{theorem}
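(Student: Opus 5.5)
We argue via Saxon's characterization (Theorem \ref{t:Saxon}). Suppose $E$ is not Baire. Then there is an absorbing, balanced, closed set $A\subseteq E$ that is a neighborhood of no point, i.e.\ $A$ has empty interior. Since $A$ is absorbing, $E=\bigcup_n nA$. Each $nA$ is closed with empty interior, so each $U_n:=E\SM nA$ is open and dense in $E$. The plan is to produce a point of $L$ that lies in no $nA$; this contradicts $E=\bigcup_n nA$.

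The construction is inductive and uses Theorem \ref{t:kFU-group-charac} inside $L$. Set $V_n:=L\cap U_n$. Because $L$ is dense and $U_n$ is open and dense, $V_n$ is open in $L$ and dense in $L$, so $0\in\overline{V_n}$ in $L$. To control the size of the terms we rescale. For $k\in\w$ put
\[
W_{k}:=2^{k}\,(L\SM 4^{k} A)=L\SM 2^{k}4^{k}A .
\]
Since $A$ is balanced, these sets are open and dense in $L$, so $0$ lies in the closure of each of them. Theorem \ref{t:kFU-group-charac}(i) then gives $n_k\uparrow$ and $w_k\in W_{n_k}$ with $w_k\to0$. Writing $y_k:=2^{-n_k}w_k$, we get $2^{n_k}y_k\to0$, so $(y_k)$ is Mackey null in $L$. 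By the relative property $(MK)$, some subsequence has a convergent sum in $E$. The trouble is that this alone does not keep the sum out of $\bigcup_n nA$.

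So the real argument must be a nested, Baire-category-style construction in which each partial sum stays away from $nA$ with some margin. My plan is the following. At stage $k$ we have a point $s_k=\sum_{i<k}y_i\in L$ and a closed neighborhood $N_k$ of $0$ with
\[
(s_k+N_k)\cap kA=\emptyset,
\]
together with $N_{k+1}+N_{k+1}\subseteq N_k$. To continue, we need points $y$ near $0$ with $s_k+y\notin (k+1)A$. The set $\{y\in L: s_k+y\in U_{k+1}\}$ is open and dense in $L$, so $0$ is in its closure. Here is the key obstacle: Theorem \ref{t:kFU-group-charac} supplies a null sequence from a \emph{countable family of sets fixed in advance}, not adaptively. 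I would get around this by a diagonal, tree-like bookkeeping. For each finite choice already made, there are countably many relevant open sets, and one applies (i) to an enumeration of all of them, rescaled by $2^{j}$ so that Mackey-nullness holds. One also passes to subsequences repeatedly; this is why $(MK)$ asks for a \emph{subsequence of every subsequence} to be summable.

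Finally, suppose the chosen series converges to some $s\in E$. We would like $s\in s_k+N_k$ for every $k$, which holds if the tails $\sum_{i\ge k}y_i$ lie in $N_k$. Closedness of $N_k$ and the choice $y_i\in N_{i+1}$ give $\sum_{i\ge k}y_i\in\overline{N_{k+1}+N_{k+2}+\cdots}\subseteq N_k$. Then $s\notin kA$ for every $k$, contradicting $E=\bigcup_k kA$. I expect the main difficulty to be reconciling the adaptive choice of $y_k$ with the non-adaptive selection in Theorem \ref{t:kFU-group-charac} and with the subsequence passage forced by $(MK)$. In particular, after passing to a subsequence, the margins $N_k$ must still apply. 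To secure this I would pick the sets so that any subsequence still works: at stage $k$, require that the avoidance condition for $s+y$ holds uniformly for all partial sums from earlier terms, of which there are only finitely many.
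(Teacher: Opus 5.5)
Your proposal has a genuine gap, located where you expect it, and the suggested ``diagonal, tree-like bookkeeping'' does not close it.

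Theorem~\ref{t:kFU-group-charac}(i) yields a null sequence only for a countable family of open sets fixed \emph{in advance}. Your stage-$k$ requirements are the open dense set $\{y\in L: s+y\notin (k+1)A \text{ for all earlier partial sums } s\}$ and the margin $N_k$. Both depend on $y_0,\dots,y_{k-1}$, which are themselves outputs of (i). So there is no countable list of ``relevant open sets'' to enumerate beforehand: the possible earlier choices range over finite tuples of points of $L$.

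Applying (i) afresh at each stage does not work either. You get a different null sequence at each stage, and a sequence taking one term from each need not converge at all, let alone be Mackey null. Gluing them would need a diagonal property of $(\alpha_4)$-type, which does not follow from $\kappa$-Fr\'echet--Urysohnness; it already fails in $\IR^{2^\w}$. Choosing $y_k$ inside shrinking neighborhoods $N_k$ does not help, because $L$ need not be first countable. Hence no Mackey null sequence satisfying your avoidance conditions is ever produced, and relative $(MK)$ is never legitimately applied.

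The missing idea, which the paper uses, is to separate the two tasks. Apply (i) only once, to the fixed family $U_n=L\SM n2^nB$. This gives $x_k\to 0$ with $x_k\notin k2^kB$, so $y_k=2^{-k}x_k$ is Mackey null and $y_k\notin kB$; this is essentially your first paragraph. All adaptive choices are then made among the \emph{indices} of this single sequence. Every subsequence is again Mackey null, so relative $(MK)$ still applies to it. All but finitely many of its terms lie in any prescribed margin neighborhood, so the tail control is automatic. The paper packages this in the Burzyk--Kli\'s--Lipecki type lemma \cite[Lemma~14.3]{kak}, whose mechanism is essentially your last paragraph run on indices. It gives $m\in\w$ and a finite $M\subseteq E$ with $\{y_n\}_{n\in\w}\subseteq mB+M$.

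Be aware of the price of selecting indices instead of points. Your partial sums $s_F$ come back as the finite set $M$ of translates. In your adaptive scheme they were harmless, since finitely many open dense conditions can be imposed at once. To contradict $y_n\notin nB$ one must absorb $M$ into a multiple of $B$. This is immediate when $B$ is absolutely convex, since $mB+M\subseteq(m+s)B$ if $M\subseteq sB$. For a merely balanced $B$ it needs a real argument: already in $\IR^2$ there are a closed balanced absorbing $B$, a point $z$, and a null sequence contained in $B+z$ but in no $tB$. The paper disposes of this step in one line by appeal to absorbency, which is not sufficient for non-convex $B$, so that step is where the remaining work lies if you follow this route.
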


\begin{proof}
Suppose for a contradiction that $E$ is not a Baire space. Then, by Theorem \ref{t:Saxon}, 
there exists an absorbing closed and balanced set $B$ in $E$ with the empty interior.
\smallskip

{\em Claim 1. There exists a sequence $\{x_k\}_{k\in\w}$ in $L$ such that $x_k \to 0$ and $x_k\notin k2^k B$.} Indeed, for every $k\in\w$, set $U_n:=L\setminus k2^k B$. Since $B$ is nowhere dense in $E$, we have $0\in \overline{U_n}$. As $L$ is $\kappa$-Fr\'{e}chet--Urysohn, Theorem \ref{t:kFU-group-charac} implies that there exist a sequence $\{x_k\}_{k\in\w}$ in $L$ and a sequence $(n_k)_k\subseteq \w$ such that $x_k\to 0$ and $x_k\in U_{n_k}$ for each $k\in \w$. We can assume that $n_k\geq k$. Then $x_k\notin k2^k B$. Claim 1 is proved.
\smallskip

For every $n\in\w$, set $y_n:=\tfrac{1}{2^n} x_n\in L$. Clearly, $y_n\notin n B$ and the sequence $(y_n)_n$ is Mackey null. Since $L$ has the relative property $(MK)$, the sequence $(y_n)_n$ is a $K$-sequence in $E$. Applying Lemma 14.3 of \cite{kak} to the sequence $(kB)$ and the $K$-sequence $(y_n)_n$ we can find a finite subset $M$ of $E$ and a number $m\in\w$ such that $\{y_n\}_{n\in\w} \subseteq mB+M$. Since $B$ is absorbing and $M$ is finite, we obtain that $\{y_n\}_{n\in\w} \subseteq tB$ for some $t\in\w$. Therefore, if $k>t$, we obtain $x_k=2^k y_k\in k 2^k B$. But this contradicts the choice of the sequence $\{x_k\}_{k\in\w}$.
\end{proof}

We select the following corollary of Theorem \ref{t:MK-Baire} and Proposition \ref{p:K=>MK}.

\begin{corollary} \label{c:K+FU=>Baire}
Each Fr\'{e}chet--Urysohn tvs $E$ with the property $(K)$ is a Baire space.
\end{corollary}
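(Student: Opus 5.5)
This is a direct consequence of Theorem \ref{t:MK-Baire} together with Proposition \ref{p:K=>MK}, so the plan is simply to verify the hypotheses of Theorem \ref{t:MK-Baire}. Let $E$ be a Fr\'{e}chet--Urysohn tvs with the property $(K)$. I will check two things: that $E$ is $\kappa$-Fr\'{e}chet--Urysohn, and that $E$ has the property $(MK)$.

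For the first point, Definition \ref{def:kFU} asks only for convergent sequences from \emph{open} sets. Fr\'{e}chet--Urysohness gives such sequences for \emph{arbitrary} sets $A\subseteq E$ and $x\in\overline{A}$. Restricting to open $U$ therefore shows that $E$ is $\kappa$-Fr\'{e}chet--Urysohn; this is the remark made right after Definition \ref{def:kFU}.

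For the second point, Proposition \ref{p:K=>MK} says directly that property $(K)$ implies property $(MK)$. Its proof uses only that Mackey null sequences are null, so it applies to an arbitrary tvs and needs no local convexity.

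With both hypotheses in hand, Theorem \ref{t:MK-Baire} applies and yields that $E$ is a Baire space. Alternatively, one can apply Theorem \ref{t:kFU-MK-dense-Baire} with $L=E$: relative property $(MK)$ for $A=E$ is exactly property $(MK)$. There is no real obstacle here; the only point to watch is that no step secretly needs local convexity, and none does.
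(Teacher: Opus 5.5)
Your proposal is correct and matches the paper's argument exactly: Fr\'{e}chet--Urysohn trivially implies $\kappa$-Fr\'{e}chet--Urysohn, Proposition~\ref{p:K=>MK} upgrades $(K)$ to $(MK)$, and Theorem~\ref{t:MK-Baire} then gives that $E$ is Baire. Your remark that no step uses local convexity is also accurate.
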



The following result is classical (for the proof see, for example, \cite[Theorem~14.2]{kak}), we provide its completely independent proof.
\begin{corollary} \label{c:product-Frechet-Baire}
The product of a family of Fr\'{e}chet spaces is a Baire space. In particular, $\IR^\lambda$ is a Baire space for any cardinal $\lambda$.
\end{corollary}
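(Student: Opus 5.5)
The plan is to apply Theorem \ref{t:MK-Baire}. For this it suffices to show that a product $E=\prod_{i\in I}E_i$ of Fréchet spaces is $\kappa$-Fréchet--Urysohn and has the property $(MK)$. The second assertion, about $\IR^\lambda$, is then the special case $E_i=\IR$ for all $i\in\lambda$.

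\emph{Step 1: property $(MK)$.} Each Fréchet space $E_i$ is complete. A product of complete uniform spaces is complete, so $E$ is a complete locally convex space. In particular $E$ is sequentially complete, hence locally complete. By Proposition \ref{p:lc-k-complete}, $E$ has the property $(MK)$. One could argue more directly instead: a Mackey null sequence is null by the proof of Proposition \ref{p:K=>MK}. Each coordinate sequence is then null in the metrizable complete space $E_i$, and so is Mackey null there. A diagonal choice of subsequences over uncountably many coordinates fails, however, so the route through local completeness is the one to take.

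\emph{Step 2: $\kappa$-Fréchet--Urysohn.} Each $E_i$ is metrizable and thus first countable. By Mrówka's theorem quoted in the introduction, any product of first countable spaces is $\kappa$-Fréchet--Urysohn, so $E$ is. If one wants to avoid citing Mrówka, the argument runs as follows. Let $U$ be open and $x\in\overline U$. Inductively pick points $u_n\in U$ and basic open boxes $B_n\subseteq U$ around $u_n$, with supports $F_n$ finite. Choose $u_{n+1}$ to be within the $(n+1)$-st ball of $x$ in the coordinates of $F_0\cup\dots\cup F_n$. Arrange also that these coordinates stay inside the box constraints. Then modify $u_n$ to a point $u_n'\in B_n$ that agrees with $x$ on coordinates outside $F_0\cup\dots\cup F_n$ and is close to $x$ on the countable set $\bigcup_k F_k$. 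The main subtlety is this bookkeeping: one must ensure that the box $B_n$ constrains only coordinates in $F_n$, so that resetting the other coordinates to those of $x$ keeps the point in $U$. Once that is done, $u_n'\to x$, because on each of the countably many relevant coordinates the approximation improves, and elsewhere the points equal $x$.

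\emph{Conclusion.} With Steps 1 and 2, Theorem \ref{t:MK-Baire} gives that $E$ is Baire. The main obstacle is Step 2 if proved from scratch. Relying on Mrówka's result as stated in the introduction, the proof is immediate.
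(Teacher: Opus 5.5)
Your proposal is correct and is essentially the paper's proof. The paper likewise cites Mr\'{o}wka's theorem for the $\kappa$-Fr\'{e}chet--Urysohn property of the product and uses that a product of complete spaces is complete. It then applies Corollary \ref{c:lc-kFU-Baire}, which is exactly your combination of Proposition \ref{p:lc-k-complete} with Theorem \ref{t:MK-Baire}. Your optional direct proof of Mr\'{o}wka's result (keep $u_n$ on $F_0\cup\dots\cup F_n$ and reset all other coordinates to those of $x$) is also sound, though not needed.
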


\begin{proof}
By \cite[Theorem~3.1]{Mrowka}, the product of any family of metrizable topological spaces is a $\kappa$-Fr\'{e}chet--Urysohn space. On the other hand, the product of any family of complete topological vector spaces is complete, see Proposition 3.2.6 of \cite{Jar}. Now Corollary \ref{c:lc-kFU-Baire} applies.
\end{proof}

\begin{problem}
Let $E$ be a  {\rm(}sequentially{\rm)} complete  {\rm(}$\kappa$-{\rm)}Fr\'{e}chet--Urysohn tvs. Is $E$ Baire?
\end{problem}

The following theorem provides a partial answer to this question.

\begin{theorem} \label{t:FU_sc_tvs-Baire}
It is consistent that each sequentially complete Fr\'{e}chet--Urysohn tvs $E$ is a Baire space.
\end{theorem}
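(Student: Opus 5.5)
The plan is to combine the consistency result of Theorem \ref{t:FU_sc_tvs=>K} with Corollary \ref{c:K+FU=>Baire}. Concretely, I will work in the same model of $\mathrm{ZFC}$ used in the proof of Theorem \ref{t:FU_sc_tvs=>K}, namely the model from \cite{HrusakRamos-Garcia2014} in which every separable Fr\'{e}chet--Urysohn topological group is metrizable.

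Let $E$ be a sequentially complete Fr\'{e}chet--Urysohn tvs. By Theorem \ref{t:FU_sc_tvs=>K}, in that model $E$ has the property $(K)$. Since $E$ is also Fr\'{e}chet--Urysohn, Corollary \ref{c:K+FU=>Baire} (itself derived from Theorem \ref{t:MK-Baire} via Proposition \ref{p:K=>MK}) shows that $E$ is a Baire space.

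The only point needing care is that Theorem \ref{t:FU_sc_tvs=>K} really gives property $(K)$ in the stated model. I will re-check its ingredients. First, for a null sequence $(x_n)$, the closed linear span $L$ of $\{x_n\}$ is separable. Second, $L$ is Fr\'{e}chet--Urysohn, since this property is hereditary. Third, $L$ is sequentially complete, being closed in the sequentially complete space $E$. Hence in the model $L$ is metrizable, therefore complete, and so $L$ has $(K)$ by Proposition \ref{p:metr+c=>K}.

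I do not expect a genuine obstacle here: the argument is a direct composition of results already established. The set-theoretic input is entirely contained in the cited model.
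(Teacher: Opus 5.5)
Your proof is correct and follows the paper's argument exactly: in the model used for Theorem~\ref{t:FU_sc_tvs=>K}, that theorem gives property $(K)$, and Corollary~\ref{c:K+FU=>Baire} then yields Baireness. Your re-check of the ingredients of Theorem~\ref{t:FU_sc_tvs=>K} is also accurate: the closed linear span of the sequence is separable, Fr\'{e}chet--Urysohnness is hereditary, closed subspaces inherit sequential completeness, and a metrizable sequentially complete tvs is complete.
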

\begin{proof}
By Theorem \ref{t:FU_sc_tvs=>K}, it is consistent that each sequentially complete Fr\'{e}chet--Urysohn tvs $E$ has the property $(K)$. Now Corollary \ref{c:K+FU=>Baire} 
implies that $E$ is a Baire space.
\end{proof}

The following statement follows from Theorem \ref{t:kFU-MK-dense-Baire} and Proposition \ref{p:rsc-rksc}.

\begin{theorem} \label{t:kFU_rsc-Baire}
If a locally convex space $E$ contains a relatively sequentially complete  $\kappa$-Fr\'{e}chet--Urysohn dense linear subspace, then $E$ is a Baire space.
\end{theorem}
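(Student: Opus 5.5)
The plan is to combine Proposition \ref{p:rsc-rksc} with Theorem \ref{t:kFU-MK-dense-Baire}. Let $L$ be the given relatively sequentially complete, $\kappa$-Fr\'{e}chet--Urysohn, dense linear subspace of the locally convex space $E$.

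First, view $L$ as a subset of $E$ with its uniform structure induced from $E$. Relative sequential completeness means that every Cauchy sequence in $L$ converges to some point of $E$. This is exactly the hypothesis of Proposition \ref{p:rsc-rksc}, so that proposition tells us that $L$ has the relative property $(MK)$ in $E$: every Mackey null sequence in $L$ is a $K$-sequence in $E$.

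The one point to check is that the proof of Proposition \ref{p:rsc-rksc} really needs only this relative hypothesis. That proof passes to the completion $\widehat E$, uses Proposition \ref{p:lc-k-complete} there, and then brings the limit back into $E$. Since the partial sums $\sum_{i\le n} y_i$ lie in $L$ because $L$ is a linear subspace, Cauchy partial sums do converge in $E$. So the proposition applies exactly as stated.

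Now $L$ is a dense $\kappa$-Fr\'{e}chet--Urysohn linear subspace of $E$ with the relative property $(MK)$. Theorem \ref{t:kFU-MK-dense-Baire} then applies directly and gives that $E$ is a Baire space.

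There is no real obstacle here. The argument is a two-line combination of earlier results, and the only care needed is the observation above that linearity of $L$ keeps the partial sums of the series inside $L$.
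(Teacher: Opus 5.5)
Your proposal is correct and is exactly the paper's argument: Proposition~\ref{p:rsc-rksc} gives $L$ the relative property $(MK)$, and Theorem~\ref{t:kFU-MK-dense-Baire} then shows that $E$ is Baire. Your remark is also worthwhile: the proposition is stated for arbitrary subsets, but its proof needs the partial sums $\sum_{i\le n} y_i$ to lie in $L$, and that is guaranteed here because $L$ is a linear subspace.
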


\begin{corollary} \label{c:kFU_complete-Baire}
The completion $\widehat E$ of a $\kappa$-Fr\'{e}chet--Urysohn locally convex space $E$ is a Baire space.
\end{corollary}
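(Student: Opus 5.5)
The plan is to apply Theorem \ref{t:kFU_rsc-Baire} to the completion $\widehat E$, using $E$ itself as the dense linear subspace. Recall that the completion $\widehat E$ of a locally convex space $E$ is again a locally convex space. It contains $E$ as a dense linear subspace, and the topology induced on $E$ coincides with the original topology of $E$. Consequently $E$, regarded as a subspace of $\widehat E$, is still $\kappa$-Fr\'{e}chet--Urysohn, since this is a purely topological property of $E$.

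The only remaining hypothesis of Theorem \ref{t:kFU_rsc-Baire} is that $E$ is a relatively sequentially complete subset of $\widehat E$. Let $(x_n)_n$ be a Cauchy sequence in $E$ with respect to the uniformity inherited from $\widehat E$, which is the canonical uniformity of $E$. Then $(x_n)_n$ is a Cauchy sequence in the complete space $\widehat E$, so it converges to some element of $\widehat E$. This is precisely relative sequential completeness of $E$ in $\widehat E$.

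Theorem \ref{t:kFU_rsc-Baire} then yields that $\widehat E$ is a Baire space. Alternatively, one can combine Proposition \ref{p:rsc-rksc} with Theorem \ref{t:kFU-MK-dense-Baire} directly. There is no real obstacle here. The only point to check is that the uniformity of $E$ as a subspace of $\widehat E$ agrees with its own uniformity, which is part of the definition of the completion.
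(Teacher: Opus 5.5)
Your proposal is correct and follows the paper's intended argument: the corollary is stated immediately after Theorem \ref{t:kFU_rsc-Baire} and follows from it by taking $E$ as the dense, $\kappa$-Fr\'{e}chet--Urysohn, relatively sequentially complete linear subspace of the complete space $\widehat E$, exactly as you do. (The paper also sketches a second route in the proof of Theorem \ref{t:kFU-Baire-like}: $\widehat E$ is itself $\kappa$-Fr\'{e}chet--Urysohn by a density result from \cite{Gabr-B1}, hence Baire by Corollary \ref{c:lc-kFU-Baire}.)
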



\section{ Independence of  the property $(MK)$, $\kappa$-Fr\'{e}chet--Urysohness and Baireness}\label{exa:feral-Baire}



In this section we show that the property $(MK)$, $\kappa$-Fr\'{e}chet--Urysohness and Baireness are completely independent. We start from the next simple example which shows that $\kappa$-Fr\'{e}chet--Urysohness implies neither Baireness nor the property $(K)$.

\begin{example} \label{exa:metr-not-K-Baire}
The subspace $E$ of $\IR^\w$ of all vectors with finite support is trivially $\kappa$-Fr\'{e}chet--Urysohn but not Baire and hence, by Theorem \ref{t:m-K-Baire}, $E$ does not have the property $(MK)$.
\end{example}

The following example shows that the property $(K)$ implies neither $\kappa$-Fr\'{e}chet--Urysohness nor  Baireness.

\begin{example} \label{exa:K-not-Baire-kFU}
The direct locally convex sum $\varphi$ of a countably infinite family of $\IR$-s has the property $(K)$, but it is  neither $\kappa$-Fr\'{e}chet--Urysohn nor  Baire.
\end{example}

\begin{proof}
Since $\varphi$ is feral (see \cite[Proposition~4.3.4]{Jar}), it has the property $(K)$ by Proposition \ref{p:feral-K}. It is clear that $\varphi$ is not a Baire space. Finally, $\varphi$ is not a  $\kappa$-Fr\'{e}chet--Urysohn space by Proposition 2.9 of \cite{Gabr-B1}.
\end{proof}

By Corollary \ref{c:lc-kFU-Baire}, a locally complete $\kappa$-Fr\'{e}chet--Urysohn locally convex space is Baire. One can naturally ask whether the converse is true, that is: {\em Is it true that a locally complete Baire space is $\kappa$-Fr\'{e}chet--Urysohn}? We answer this problem in the negative. First we prove several auxiliary results.

\begin{lemma}\label{l:Rn+1-ls}
For every $n\in\NN$, there exists a sequence $(V_m)_{m\in \w}$ of open subsets of $\IR^{n+1}$ such that
\begin{enumerate}
\item[{\rm(i)}] for every $m\in\w$, $V_m$ is bounded and $0\not\in \overline{V_m}$;
\item[{\rm(ii)}] $(V_m)_{m\in \w}$ converges to zero, i.e., for every neighborhood $W$ of $0$ there is $k\in\w$ such that $V_m\subseteq W$ for every $m\geq k$;
\item[{\rm(iii)}] $|\{m\in\w: V_m\cap L\neq\emptyset\}|\leq n $ for any linear subspace $L\subseteq \IR^{n+1}$ of dimension $n$.
\end{enumerate}
\end{lemma}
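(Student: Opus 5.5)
Fix $n\in\NN$. The idea is to put each $V_m$ as a tiny ball around a point $q_m$, and to choose the points $q_m$ in general position: any $n+1$ of them are linearly independent. Then a hyperplane through $0$ (a linear subspace of dimension $n$) can contain at most $n$ of the points. Balls that are small enough should then still meet at most $n$ points' worth of hyperplanes. Openness alone does not give this, so the radii must be controlled by a compactness argument.

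\textbf{Choosing the centres.} Take a sequence of points $q_m\in\IR^{n+1}$ with $q_m\to0$, $q_m\ne0$, and any $n+1$ of them linearly independent. A convenient explicit choice is the moment curve:
\[
q_m=t_m\,(1,t_m,t_m^2,\dots,t_m^n)
\]
with distinct $t_m\in(0,1)$ and $t_m\to0$. Any $n+1$ of these vectors form a scaled Vandermonde matrix, which is nonsingular. Alternatively, choose the points inductively, avoiding the finitely many hyperplanes spanned by $n$ earlier points.

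\textbf{Choosing the radii.} Set $V_m=B(q_m,r_m)$, where the radii $r_m>0$ satisfy $r_m<|q_m|/2$ and $r_m\to0$. Then (i) and (ii) are immediate, since $\overline{V_m}$ lies in the closed ball of radius $\tfrac32|q_m|$ and misses $0$.

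For (iii), we want every choice of $u_i\in V_{m_i}$ with distinct $m_0,\dots,m_n$ to be linearly independent. This is the only delicate point, because infinitely many $(n+1)$-tuples of indices must be handled simultaneously. The trick is to choose $r_m$ inductively. Suppose $r_0,\dots,r_{m-1}$ have been fixed, with closed balls $\overline{B}(q_i,r_i)$ such that every selection from any $n+1$ distinct ones among them is independent.

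Consider each $n$-element set $I\subseteq\{0,\dots,m-1\}$ (finitely many). The set of $n$-tuples $(u_i)_{i\in I}$ with $u_i\in\overline{B}(q_i,r_i)$ is compact. Each such tuple is linearly independent, so it spans a hyperplane $H(u)$. The function $u\mapsto \operatorname{dist}(q_m,H(u))$ is continuous: it equals $|\det(u,q_m)|/\operatorname{vol}_n(u)$, and the denominator is nonzero on the compact set.

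We will also ensure $q_m$ is independent of every such selection; this is a shrinking-radius condition on the earlier balls. To make it available, at each earlier stage impose that $q_j$, for future $j$, lies off the hyperplanes in question. This holds if we choose the points $q_m$ inductively together with the radii. At step $m$, we need $q_m$ outside the union of the compact families of hyperplanes $H(u)$ over all $I$. Each such family is a closed set: it is the union of a compact family of hyperplanes through $0$. It is not all of a neighbourhood of $0$ only if the radii are small, so the points must be chosen carefully.

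\textbf{Step order.} At step $m$, first pick $q_m$ with $0<|q_m|<1/m$ outside the closed set $F_m=\bigcup_I\bigcup_u H(u)$. Then take $r_m$ smaller than $\operatorname{dist}(q_m,F_m)$ (positive, since $F_m$ is closed) and smaller than $1/m$ and $|q_m|/2$. With this choice, any $u_m\in\overline{B}(q_m,r_m)$ avoids every $H(u)$, which gives independence of all $(n+1)$-selections involving $m$ as the largest index. Consequently any hyperplane $L$ meets at most $n$ of the $V_m$: if it met $n+1$ of them, the chosen points would be dependent.

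\textbf{Main obstacle.} The danger is that $F_m$ might contain a whole neighbourhood of $0$, making it impossible to choose $q_m$ near $0$. We prevent this by keeping $F_m$ a finite union of thin cones around the hyperplanes $\operatorname{span}\{q_i:i\in I\}$. Because the earlier $r_i$ can be taken small relative to $|q_i|$, these angular thickenings stay small. So we additionally require each $r_i\le\varepsilon_i|q_i|$, with $\varepsilon_i$ small enough that $F_m$ misses a fixed open cone around a direction of the moment curve. Then $q_m$ can be taken in that cone near $0$. I expect verifying this cone-avoidance uniformly in $m$ to be the main technical step. I would first try to handle it by fixing the directions $q_m/|q_m|$ along a convergent sequence on the sphere, chosen in general position, and letting the thickenings be summable.
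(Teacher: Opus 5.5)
\textbf{There is a genuine gap.} Your reduction is fine. Taking $V_m$ to be small balls around centres in general position gives (i) and (ii) at once, and (iii) reduces to showing that every selection $u_i\in\overline{B}(q_{m_i},r_{m_i})$ from $n+1$ distinct balls is linearly independent. But that reduction is the whole lemma, and your induction does not establish it.

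At stage $m$ you need a point outside $F_m=\bigcup_I\bigcup_u H(u)$. The set $F_m$ depends on radii $r_i$ ($i<m$) that were fixed before $q_m$ is chosen, and nothing you prove guarantees $F_m\neq\IR^{n+1}$ for every $m$. Your worry that $F_m$ might contain a neighbourhood of $0$ is the same issue, since $F_m$ is a union of linear hyperplanes and hence scale-invariant. Also, (ii) never requires $q_m$ near $0$: condition (iii) is invariant under $V_m\mapsto a_mV_m$ with $a_m>0$, so one can rescale at the end, as the paper does.

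The invariants you suggest do not work as stated:
\begin{enumerate}
\item[(a)] ``$F_m$ misses a fixed open cone $\Gamma$'' is incompatible with taking the centres in $\Gamma$. For $m\ge n$, each earlier centre $q_i$ lies in $\operatorname{span}\{q_j:j\in I\}\subseteq F_m$ for any $n$-set $I\ni i$.
\item[(b)] ``Summable thickenings'' fails for a union bound. For each $n$-set $I\ni i$, varying only $u_i$ over $\overline{B}(q_i,r_i)$ already sweeps a double wedge about $\operatorname{span}\{q_j:j\in I\setminus\{i\}\}$. Its dihedral angle is at least $2r_i/|q_i|$, so it covers a fixed positive proportion of the sphere, and infinitely many such $I$ arise as $m\to\infty$.
\item[(c)] ``At each earlier stage impose that $q_j$, for future $j$, lies off the hyperplanes'' is circular, since the future $q_j$ are not yet known.
\end{enumerate}
So overlaps must be exploited through the specific geometry of the centres. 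Openness and compactness alone cannot give independence uniformly over infinitely many $(n+1)$-tuples.

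The missing ingredient is such a tuple-uniform quantitative estimate for an explicit configuration, and the paper supplies one. It identifies $\IR^{n+1}$ with $\IR_n'[x]$, so $n$-dimensional subspaces are kernels of nonzero polynomials $f$. It takes centres $\delta_{q_m}$, which are points of the moment curve, with $q_m=(n+1)2^m$ growing geometrically. It uses one fixed neighbourhood $\e U$ with $\e=1/(2(n+1)2^n)$, so the relative radii are of order $q_m^{-n}$.

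Claim 2 is Lagrange interpolation combined with $|1-2^{m_i-m_j}|\ge\frac12$. It shows that if $|f(q_{m_i})|<\e$ at $n+1$ distinct nodes, then $|f(k)|\le\frac12$ for $k=0,\dots,n$. This contradicts the normalisation $f\in V$, $\max_k|f(k)|=\frac23$, and the bound holds for all tuples simultaneously. Only afterwards does the paper set $V_m=a_mU_m$ with $a_m\to0$.

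To rescue your ball picture, you would have to fix such centres and radii in advance and prove an estimate of exactly this kind. The greedy induction does not supply it.
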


\begin{proof}
Denote by $\IR_n[x]$ the linear space of all polynomials of degree at most $n$ over the field $\IR$.
We shall identify $\IR^{n+1}$ with the dual space $\IR_n'[x]$ of $\IR_n[x]$. With this identification and taking into account that any $n$-dimensional linear subspace of $\IR_n'[x]$ is uniquely defined by a nonzero polynomial $f\in \IR_n[x]$ (as the kernel of $f$ under the duality $f(l)=l(f)$ for $f\in \IR_n[x]$ and $l\in \IR_n'[x]$), to prove the lemma we first construct a sequence $(U_m)_{m\in \w}$ of open {\em bounded} subsets of $\IR_n'[x]$ such that $0\not\in \overline{U_m}$ for every $m\in\w$, and
\begin{equation}\label{eq:lemRn+1}
|\{m\in\w: 0\in f(U_m) \}|\leq n \; \text{ for each }\; f\in \IR_n[x]\setminus \{0\}.
\end{equation}

For $t\in\IR$, let $\delta_t\in \IR_n'[x]$ denote the Dirac functional: $\delta_t(f)=f(t)$ for $f\in \RR_n[x]$.
\smallskip

{\em {\bf Claim 1.} If $t_0,t_1,...,t_m$ are distinct elements of $\IR$ and $m\leq n$, then the functionals $\delta_{t_0},\delta_{t_1},...,\delta_{t_m}$ are linearly independent.}
\smallskip

{\em Proof of Claim 1.} By adding distinct elements $t_{m+1},\dots,t_n$, without loss of generality we assume that $m=n$. For every $i=0,1,...,n$, let $f_i\in \IR_n[x]$ be such that $f_i(t_j)=0$ if $j\neq i$ and $f_i(t_j)=1$ if $j= i$. Then the matrix $\big(\delta_{t_i}(f_j)\big)_{ij}=\big(f_j(t_i)\big)_{ij}$ is the identity matrix and, therefore, nonsingular. Consequently, the functionals $\delta_{t_0},\delta_{t_1},...,\delta_{t_m}$ are linearly independent. Claim 1 is proved.
\smallskip

We set
\[
\e := \tfrac{1}{2(n+1)2^n} \; \text{ and } \; q_m := (n+1) 2^m \; \text{ for }\; m\in\w.
\]
\smallskip

{\em {\bf Claim 2.} Let $m_0,m_1,...,m_n\in\w$ be distinct numbers, and let $f\in \IR_n[x]$ be such that $|f(q_{m_i})|<\e$ for $i=0,1,...,n$.  Then $|f(k)|\leq \tfrac{1}{2}$ for every $k=0,1,...,n$.}
\smallskip

{\em Proof of Claim 2.} To prove the claim, for every $i=0,1,...,n$, we set $y_i:=f(q_{m_i})$ and
\[
f_i(x) =\frac
{(q_{m_0}-x)(q_{m_1}-x)\cdots(q_{m_{i-1}}-x)(q_{m_{i+1}}-x)\cdots(q_{m_{n}}-x)}
{(q_{m_0}-q_{m_i})(q_{m_1}-q_{m_i})\cdots(q_{m_{i-1}}-q_{m_i})(q_{m_{i+1}}-q_{m_i})\cdots(q_{m_{n}}-q_{m_i})}.
\]
Then
\[
f(x)=\sum_{i=0}^n y_i f_i(x).
\]
By the assumption of the claim we have $|y_i|<\e$. Since $n < q_{m_j}$ for each $j=0,1,...,n$, we obtain
\[
|f_i(k)|\leq |f_i(0)| \;\; \mbox{ for every $k=0,1,...,n$}.
\]

Let us estimate $|f_i(0)|$. Since
\[
f_i(0) =\frac{1}
{(1-\frac{q_{m_i}}{q_{m_0}})(1-\frac{q_{m_i}}{q_{m_1}})\cdots(1-\frac{q_{m_i}}{ q_{m_{i-1}}})(1-\frac{q_{m_i}}{q_{m_{i+1}}})\cdots(1-\frac{q_{m_i}}{q_{m_{n}}})}
\]
and $|1-\tfrac{q_{m_i}}{q_{m_j}}|=|1-2^{m_i-m_j}|\geq \frac{1}{2}$ for distinct $i$ and $j$, we obtain
\[
|f_i(0)|\leq 2^{n}.
\]
Therefore, for every $k=0,1,...,n$, we have
\[
|f(k)| = \Big|\, \sum_{i=0}^n y_i f_i(k) \,\Big| \leq \sum_{i=0}^n |y_i| \cdot|f_i(k)| \leq \sum_{i=0}^n |y_i|\cdot |f_i(0)| < \e \cdot (n+1) 2^n
= \tfrac{1}{2}.
\]
Claim 2 is proved.
\smallskip

Now we define
\begin{align*}
V  &:= \big\{ f\in \IR_n[x] : |f(\delta_k)|<1 \text{ for }k=0,1,...,n\big\},
\\
U &:= \big\{ l\in \IR_n'[x] : l(V) \subseteq (-1,1) \big\}.
\end{align*}
Since, by Claim 1,  $\delta_0,\dots,\delta_n$ are linearly independent in the $(n+1)$-dimensional space $\IR'_n[x]$, it follows that $V$ and hence also $U$ are open bounded neighborhoods of zero. For every $m\in\w$, set
\[
U_m := \delta_{q_m} + \e U.
\]
Then all $U_m$ are open bounded subsets of $\IR_n'[x]$. We claim that $0\not\in\overline{U_m}$. Indeed, choose a polynomial $g\in V$ such that $g(q_m)>2$. Then, for every $l\in U$, we have
\[
g(\delta_{q_m} + \e l)\geq g(q_m)-\e>1,
\]
which implies that $0\not\in\overline{U_m}$.

Let us check that the sequence $(U_m)_{m\in\om}$ satisfies condition (\ref{eq:lemRn+1}).
Assuming the converse we could find distinct numbers $m_0,m_1,...,m_n\in\w$ and $g\in \IR_n[x]\setminus\{0\}$ such that $0\in g(U_{m_i})$ for $i=0,1,...,n$.
Let $s=\max \{|g(k)|:k=0,1,...,n\}$. Since $g\neq 0$, we have $s>0$. Set $f(x):= \tfrac{2g(x)}{3s}$.
Then $f\in V$ and $|f(k_0)|>\tfrac{1}{2}$ for some $k_0\in \{0,1,...,n\}$.

Let $i\in\{0,1,...,n\}$. Then $0\in f(U_{m_i})$. Choose $h_i\in U$ such that $f(\delta_{q_{m_i}}-\e h_i)=0$. Then $f(q_{m_i})=\e f(h_i)$. Since $f\in V$ and $h_i\in U$, we have $|f(h_i)|< 1$. Therefore, $|f(q_{m_i})|<\e$. Applying Claim 2 to the polynomial $f$, we obtain  that $|f(k_0)|\leq \tfrac{1}{2}$. This is a contradiction.
\smallskip

To finish the proof of the lemma, for every $m\in\w$,  it suffices to set $V_m:= a_m U_m = a_m(\delta_{q_m} + \e U)$, where $a_m\to 0$ sufficiently fast. Then the properties of $U_m$-s proved above imply (i)-(iii) for the sequence  $(V_m)_{m\in \w}$, as desired.
\end{proof}

\begin{proposition} \label{p:feral-not-kFU}
Let $A$ be an infinite set, and let $E\subseteq \IR^A$ be a dense linear subspace. If $E$ is feral, then $E$ is not $\kappa$-Fr\'{e}chet--Urysohn.
\end{proposition}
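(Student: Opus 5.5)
The plan is to use the finite-dimensional sets of Lemma \ref{l:Rn+1-ls} to build a single open set $W\subseteq E$ with $0\in\overline W$ such that no sequence in $W$ converges to $0$. The key input is that $E$ is feral: every convergent sequence is bounded, so it lies in a finite-dimensional subspace $F\subseteq E$. Condition (iii) of the lemma is designed to rule out exactly such sequences.

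\emph{Construction.} Since $A$ is infinite, choose pairwise disjoint finite sets $A_0,A_1,\dots\subseteq A$ with $|A_j|=j+1$. Let $\pi_j\colon\IR^A\to\IR^{A_j}\cong\IR^{j+1}$ be the coordinate projection. For each $j$, Lemma \ref{l:Rn+1-ls} gives a sequence $(V^j_m)_{m\in\w}$ of open subsets of $\IR^{j+1}$. For $j=0$ we take any bounded open sets $V^0_m\subseteq\IR$ with $0\notin\overline{V^0_m}$ converging to $0$. Multiplying each $V^j_m$ by a small positive scalar preserves (i)--(iii), since scalar multiples of $n$-dimensional subspaces are the same subspaces. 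So we may assume $V^j_m$ lies in the ball of radius $\tfrac1{m+1}$ for all $j\le m$. Put
\[
W_m:=E\cap\bigcap_{j\le m}\pi_j^{-1}(V^j_m),\qquad W:=\bigcup_{m\in\w}W_m .
\]
Each $W_m$ is open in $E$, and $W$ is the open set we test.

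\emph{Step 1: $0\in\overline W$.} Take a basic neighbourhood $O=\{x:|x(a)|<\e \text{ for } a\in G\}$ with $G\subseteq A$ finite. Choose $m$ with $\tfrac1{m+1}<\e$. The set
\[
\{x\in\IR^A:\pi_j(x)\in V^j_m\ (j\le m),\ |x(a)|<\e\ (a\in G\setminus\textstyle\bigcup_{j\le m}A_j)\}
\]
is a nonempty open subset of $\IR^A$, because the $A_j$ are disjoint and the $V^j_m$ are nonempty. It is contained in $O$, since every coordinate from $A_j$ with $j\le m$ is at most $\tfrac1{m+1}<\e$ in absolute value. By density of $E$ this set meets $E$, so $O\cap W_m\neq\emptyset$.

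\emph{Step 2: no sequence in $W$ converges to $0$.} Suppose $x_k\in W_{m_k}$ and $x_k\to0$. Since $W_{m}\subseteq\pi_0^{-1}(V^0_m)$ and $0\notin\overline{V^0_m}$, each value of $m$ occurs for only finitely many $k$, so we may assume $m_k\to\infty$ strictly. The sequence $\{x_k\}$ is bounded, and $E$ is feral, so it lies in a finite-dimensional subspace $F$ with $\dim F=d$. Fix $j\ge d$; then for all $k$ with $m_k\ge j$ we have $\pi_j(x_k)\in V^j_{m_k}\cap\pi_j(F)$. Hence the subspace $\pi_j(F)$, of dimension at most $d\le j$, meets infinitely many of the sets $V^j_m$. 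Enlarge $\pi_j(F)$ to a $j$-dimensional subspace of $\IR^{j+1}$; this contradicts (iii) of Lemma \ref{l:Rn+1-ls}. Therefore $W$ witnesses that $E$ is not $\kappa$-Fr\'echet--Urysohn.

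The main obstacle is Step 2. A naive construction using a separate block $A_n$ for each $n$ lets a sequence visit each block only finitely often, and then (iii) is never violated. The remedy is to intersect over all $j\le m$ inside $W_m$, so that a single convergent sequence is forced to hit infinitely many $V^j_m$ for every fixed $j$, and in particular for some $j\ge\dim F$. The price is Step 1, where shrinking the $V^j_m$ uniformly for $j\le m$ is needed so that the intersection still approaches $0$.
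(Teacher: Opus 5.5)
Your proposal is correct and follows the paper's own proof essentially step for step. Both use disjoint blocks of size $n+1$, the sets $E\cap\bigcap_{j\le m}\pi_j^{-1}(V^j_m)$, and feralness plus (iii) of Lemma~\ref{l:Rn+1-ls} to rule out a null sequence. The only cosmetic differences are how you get the two auxiliary facts: you use an extra block $j=0$ (instead of the diagonal set $V_{m,m}$) to show $0\notin\overline{W_m}$, and a uniform shrinking of the $V^j_m$ (instead of points $v_k\in W_k$ converging coordinatewise) to show $0\in\overline{W}$.
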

\begin{proof}
Suppose for a contradiction that $E$ is a $\kappa$-Fr\'{e}chet--Urysohn space.
\smallskip

Let $(M_n)_n$ be a pairwise disjoint sequence of finite subsets of $A$ such that $|M_n|=n+1$. For every $n\in\NN$, by Lemma \ref{l:Rn+1-ls}, there exists a sequence $(V_{n,k})_{k\in\NN}$ of open subsets of $\IR^{M_n}$ such that
\begin{enumerate}
\item[$(\alpha)$] $0\notin \overline{V_{n,k}}$ for every $k\in\NN$,
\item[$(\beta)$] $(V_{n,k})_k$ converges to $0$, and
\item[$(\gamma)$] $|\{k\in\NN: V_{n,k}\cap L\neq\emptyset\}|\leq n$ for any $n$-dimensional linear subspace $L\subseteq \IR^{M_n}$.
\end{enumerate}

For every  $n\in \NN$, set
\[
W_n := \bigcap_{i=1}^n \pi^{-1}_{M_i}(V_{i,n})\;\; \mbox{ and }\;\; U_n := E \cap W_n
\]
and put $U:=\bigcup_{n=1}^\infty U_n$. The density of $E$ in $\IR^A$ implies that all $U_n$ are nonempty.
\smallskip

(a) We claim that $0\notin\overline{U_n}$ for every  $n\in\NN$. Indeed, otherwise, we would have $\pi_{M_n}(0)\in \overline{\pi_{M_n}(U_n)}\subseteq \overline{V_{n,n}}$ that is impossible by the property $(\alpha)$ of the sequence $(V_{n,k})_{k\in\NN}$.
\smallskip

(b) We show that $0\in \overline{U}$. Since $U$ is dense in $W := \bigcup_{n=1}^\infty W_n$, it suffices to verify that $0\in \overline{W}$.
As $(M_n)_n$ is pairwise disjoint , for every $k\in \NN$, there is $v_k\in \IR^A$ such that $\pi_{M_n}(v_k)\in V_{n,k}$ for $n\in \NN$ and $\pi_\alpha(v_k)=0$ for $\alpha\in A\setminus \bigcup_{n=1}^\infty M_n$. Then $v_k\in W_k$ for all $k\in \NN$. It follows from $(\beta)$ that the sequence $(v_k)_k$ converges to $0$. Consequently, $0\in \overline{W}$.
\smallskip

(c) Let us show that $\{n\in\NN: U_{n}\cap L\neq\emptyset\}$  is finite for any finite-dimensional linear subspace $L\subseteq E$. Indeed, let $m$ be the dimension of $L$. By $(\gamma)$, choose $n_0>m$ such that $V_{m,n}\cap \pi_{M_m}(L)=\emptyset$ for every $n\geq n_0$. Then, for every $n\geq n_0$, we obtain
\[
\pi_{M_m}(U_n \cap L)\subseteq \pi_{M_m}(U_n) \cap \pi_{M_m}(L)\subseteq V_{m,n}\cap \pi_{M_m}(L)=\emptyset,
\]
and hence $U_n \cap L=\emptyset$. Thus the set $\{n\in\NN: U_{n}\cap L\neq\emptyset\}$ is finite, as desired.
\smallskip

By our supposition, $E$ is a $\kappa$-Fr\'{e}chet--Urysohn space. Therefore, by (b), there exists a null sequence $(x_k)_k\subseteq U$. Then $x_k\in U_{n_k}$ for some $n_k\in \NN$. Passing to a subsequence if needed and by (a),  we can assume that $(n_k)_k$ is a strictly increasing sequence.
Since $E$ is feral, the sequence  $(x_{k})_k$ is contained in some finite-dimensional subspace $L\subseteq E$. Applying (c) we see that the set $S:=\{n\in\NN: U_{n}\cap L\neq\emptyset\}$ is finite. Consequently, $0\in \overline{\{x_{k}\}_k}\subseteq \cup_{n\in S} \overline{U_n}$ that contradicts (a).
\end{proof}


Let $(G,+)$ be an abelian group, and let $X$ be a set. The group $G$ acts on $X^G$ by a shift as follows:
\[
s: G\times X^G \to X^G,\ s(g,f)(h)=f(g+h),
\]
where $f\in X^G$ and $g,h\in G$. An element $f\in X^G$ is called a \emph{Korovin mapping} if for any $M\subseteq G$ with $|M|\leq\w$ and each $h\in X^M$, there exists an element $g=g_h\in G$ such that $h=s(g,f){\restriction}_M$. For a Korovin mapping $f$, the subspace
\[
G_f = \{s(g,f): g\in G\} \; \mbox{ of }\; X^G,
\]
is called a \emph{Korovin orbit}. For $M\subseteq G$, let $\pi_M: X^G\to X^M$ be the projection. If $M=\{\alpha\}$, we set $\pi_\alpha=\pi_{\{\alpha\}}: X^G\to X$, so that $\pi_\alpha(f)=f(\alpha)$ for every $f\in M^G$.

\begin{lemma}\label{l:korovin}
Let $f$ be a Korovin map. Then for any countable $A\subseteq G_f$ and each mapping $\varphi: A\to X$, there is an $\alpha\in G$ such that $\varphi=\pi_\alpha{\restriction}_A$.
\end{lemma}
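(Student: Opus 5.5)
The plan is to write the countable set $A$ as the orbit of a countable set of group elements, and then turn the prescribed values into a single function on that countable set, to which the Korovin property of $f$ applies directly.

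Let $A\subseteq G_f$ be countable and fix $\varphi: A\to X$. Every element of $A$ has the form $s(g,f)$ for some $g\in G$, so choose for each $a\in A$ one such $g_a$ with $a=s(g_a,f)$. Put $M:=\{g_a: a\in A\}\subseteq G$; this set is countable. Since $a\mapsto g_a$ is a choice, distinct elements of $A$ get distinct group elements. More importantly, $g_a=g_b$ forces $a=s(g_a,f)=s(g_b,f)=b$, so $a\mapsto g_a$ is injective. Therefore the formula
\[
h(g_a):=\varphi(a)\qquad (a\in A)
\]
defines a function $h\in X^M$ without any ambiguity.

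Next apply the definition of a Korovin mapping to the countable set $M$ and to $h$. This yields $\alpha\in G$ with $s(\alpha,f){\restriction}_M=h$, which means $f(\alpha+g_a)=\varphi(a)$ for all $a\in A$.

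Finally, verify that this $\alpha$ works:
\[
\pi_\alpha(a)=a(\alpha)=s(g_a,f)(\alpha)=f(g_a+\alpha)=f(\alpha+g_a)=\varphi(a),
\]
where the middle equality uses commutativity of $G$. Hence $\varphi=\pi_\alpha{\restriction}_A$.

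I see no real obstacle here. The only step needing care is that $h$ is well defined, which is the injectivity of $a\mapsto g_a$ noted above, together with using commutativity to match $s(\alpha,f)$ evaluated at $g_a$ with $s(g_a,f)$ evaluated at $\alpha$.
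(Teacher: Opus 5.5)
Your proof is correct and takes the same route as the paper: pick group elements representing the points of $A$, define $h$ on the resulting countable set $M$ via $\varphi$, apply the Korovin property, and use commutativity to obtain $\pi_\alpha(a)=\varphi(a)$. Your observation that $g_a=g_b$ forces $a=b$ is the same fact the paper uses when it notes that the $\alpha_n$ are distinct, and indexing by $A$ rather than by an enumeration covers finite $A$ without a separate remark.
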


\begin{proof}
Let $A=\{f_n:n\in\NN\}$, where all $f_n$ are distinct. Then $f_n=s(\alpha_n,f)$ for some $\alpha_n\in G$; as $f_n\not= f_m$ it follows that also $\alpha_n\not= \alpha_m$ for all distinct $n,m\in\NN$. Set $M:=\{\alpha_n\}_{n\in\NN}$, and define $h\in X^M$ by $h(\alpha_n):=\varphi(f_n)$ for $n\in\NN$. Since $f$ is a Korovin map, for $M$ and $h$, there is $\alpha\in G$ such that $h=s(\alpha,f){\restriction}_M$. Hence, for every $n\in\NN$, we have
\[
\varphi(f_n)=h(\alpha_n)=s(\alpha,f)(\alpha_n)=f(\alpha+\alpha_n)=f(\alpha_n+\alpha)=s(\alpha_n,f)(\alpha)=f_n(\alpha)=\pi_\alpha(f_n).
\]
Thus $\varphi=\pi_\alpha{\restriction}_A$.
\end{proof}

Let $X$ be a Tychonoff space. A subset $B$ of $X$ is {\em $C$-embedded} if any continuous function $f:B\to\IR$ can be extended to a continuous function $\bar f:X\to\IR$. Recall also that a subset $Y$ of $X$ is {\em $G_\delta$-dense} in $X$ if $Y$ intersects any non-empty $G_\delta$-set of $X$.

\begin{proposition}\label{p:P-G_delta-dense}
Let $G$ be a set of cardinality $2^\omega$. Then there exists $P\subseteq \IR^G$ such that
\begin{enumerate}
\item[{\rm(i)}] $P$ is $G_\delta$-dense in $\IR^G$;
\item[{\rm(ii)}] every countable subset of $P$ is closed, discrete, and $C$-embedded in $P$;
\item[{\rm(iii)}]  for any countable $Q\subseteq P$ and each function $g: Q\to \IR$, there exists an $\alpha\in G$ such that $g=\pi_\alpha{\restriction}_Q$.
\end{enumerate}
\end{proposition}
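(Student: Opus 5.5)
The construction will be a Korovin orbit. The plan is to take the abelian group $G$ of cardinality $2^\omega$ and $X=\IR$, build a Korovin mapping $f\in\IR^G$, and set $P:=G_f=\{s(g,f):g\in G\}$. Property (iii) is then exactly Lemma \ref{l:korovin}, and (i) and (ii) should follow from (iii) together with the Korovin property.

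\emph{Existence of a Korovin mapping.} Choose a group structure on $G$, for instance $G=\bigoplus_{\mathfrak c}\mathbb{Z}_2$, or any abelian group of size $\mathfrak c$. There are $\mathfrak c^{\omega}=\mathfrak c$ pairs $(M,h)$ with $M\subseteq G$ countable and $h\in\IR^M$; enumerate them as $\{(M_\beta,h_\beta):\beta<\mathfrak c\}$. We build $f$ by transfinite recursion, keeping $\operatorname{dom}$ of the partial function small ($<\mathfrak c$) at each stage before $\mathfrak c$. At stage $\beta$, let $D_\beta$ be the domain defined so far, with $|D_\beta|<\mathfrak c$. Pick $g_\beta\in G$ such that the translate $g_\beta+M_\beta$ is disjoint from $D_\beta$; this is possible because the set of bad $g$ is $\bigcup_{m\in M_\beta}(D_\beta-m)$, which has size $<\mathfrak c$. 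Then define $f(g_\beta+m):=h_\beta(m)$ for $m\in M_\beta$. The translates are injective, so there are no conflicts. At the end, extend $f$ arbitrarily, for instance by $0$. Then $s(g_\beta,f){\restriction}_{M_\beta}=h_\beta$, so $f$ is Korovin. (If $\mathfrak c$ is singular we still have $|D_\beta|\le|\beta|\cdot\omega<\mathfrak c$, so the cardinality count still works.)

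\emph{(i) $G_\delta$-density.} A nonempty $G_\delta$ subset of $\IR^G$ contains a set of the form $\pi_M^{-1}(y)$ for some countable $M\subseteq G$ and $y\in\IR^M$: each basic $G_\delta$ depends on countably many coordinates, and we can pick a point inside it. By the Korovin property there is $g$ with $s(g,f){\restriction}_M=y$, so $s(g,f)\in P$ lies in that set.

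\emph{(ii).} Let $Q\subseteq P$ be countable and $p\in P\setminus Q$. Apply (iii) to $Q\cup\{p\}$ with the function equal to $0$ on $Q$ and $1$ at $p$. This gives a coordinate $\pi_\alpha$ separating $p$ from $Q$, so $Q$ is closed. Discreteness follows by the same argument applied to $q\in Q$ and $Q\setminus\{q\}$, which is countable.

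For $C$-embeddedness, take any $g:Q\to\IR$; every such function is continuous since $Q$ is discrete. By (iii) there is $\alpha$ with $g=\pi_\alpha{\restriction}_Q$, and $\pi_\alpha{\restriction}_P$ is a continuous extension. Note that (iii) requires $Q\subseteq G_f$ to be countable; finite $Q$ is covered as well, since Lemma \ref{l:korovin} works for any countable set, finite included.

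The main obstacle is the recursion for the Korovin mapping, specifically the cardinal bookkeeping that keeps the domain of $f$ of size $<\mathfrak c$ at each stage. Everything else reduces directly to Lemma \ref{l:korovin}.
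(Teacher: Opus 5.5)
Your proof is correct, and it uses the same construction as the paper: $P$ is the Korovin orbit $G_f$ of a Korovin map $f\colon G\to\IR$, and (iii) comes from Lemma~\ref{l:korovin}. The difference is that the paper gets every other ingredient by citation, while you prove each one directly:
\begin{enumerate}
\item[(a)] The paper takes the existence of $f$ from \cite[Theorem~1]{ReznichenkoTkachenko2024}. You build $f$ by transfinite recursion, and it is sound: the bad set $\bigcup_{m\in M_\beta}(D_\beta-m)$ has size at most $|\beta|\cdot\omega<2^\omega$, and this bound needs no regularity of $2^\omega$.
\item[(b)] The paper gets (i) from the fact that $G_f$ fills countable subproducts together with \cite[Proposition~2.16]{BG-Baire}. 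Your direct argument works once the reduction is stated precisely: pick a point $x$ of $\bigcap_n O_n$, choose basic open sets $x\in B_n\subseteq O_n$, and let $M$ be the union of their finite supports. Then $\pi_M^{-1}(x{\restriction}_M)\subseteq\bigcap_n O_n$, and the Korovin property supplies a point of $P$ in this set.
\item[(c)] The paper cites \cite[Theorem~2]{ReznichenkoTkachenko2024} for (ii). You derive (ii) from (iii): a single coordinate $\pi_\alpha$ separates a point from a countable set, and any $g\colon Q\to\IR$ extends as $\pi_\alpha{\restriction}_P$.
\end{enumerate}
Your route makes the proposition self-contained. It also shows that (ii) is a formal consequence of (iii), not an extra property of Korovin orbits needing its own theorem. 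The paper's route is shorter and relies on the existing theory of Korovin orbits.
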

\begin{proof}
We can assume that $G$ is an abelain group of cardinality $2^\w$. It follows from \cite[Theorem 1]{ReznichenkoTkachenko2024} that there is a Korovin map $f:G\to \IR$. Let $P=G_f$ be the Korovin orbit.
We show that $P$ is as desired.

(i) By \cite[Proposition 1]{ReznichenkoTkachenko2024}, the subset $P$ of $\IR^G$ {\em fills countable subproducts} in $\IR^G$ in the sense that  $\pi_M(P)=\IR^M$, for each countable set $M \subseteq G$. Then, by \cite[Proposition~2.16]{BG-Baire}, the space $P$ is $G_\delta$-dense in $\IR^G$.

(ii) It follows from \cite[Theorem 2]{ReznichenkoTkachenko2024} that every countable subset of $P$ is closed, discrete, and $C$-embedded in $P$.

(iii) follows from Lemma \ref{l:korovin} applied to $X=\IR$.
\end{proof}

\begin{example}\label{lcb-not_kFU}
There exists a locally convex space $E$ such that:
\begin{enumerate}
\item[{\rm(i)}] $E$ is a Baire space;
\item[{\rm(ii)}]  $E$ is feral, and hence $E$ is quasi-complete and has the property $(K)$;
\item[{\rm(iii)}]  $E$ is not $\kappa$-Fr\'{e}chet--Urysohn;
\item[{\rm(iv)}] the completion $\widehat E$ of $E$ is a $\kappa$-Fr\'{e}chet--Urysohn space.
\end{enumerate}
\end{example}

\begin{proof}
Let $G$ be a set of cardinality $2^\omega$. It follows from Proposition \ref{p:P-G_delta-dense} that there is a $G_\delta$-dense subset $P$ in $\IR^G$ such that the following assertion holds.

\smallskip

{\em {\bf Claim 1.} For any countable $Q\subseteq P$ and each function $g: Q\to \IR$, there exists an $\alpha\in G$ such that $g=\pi_\alpha{\restriction}_Q$.}

\smallskip

(i)
The space $P$ is $G_\delta$-dense  in the Baire space $\IR^G$ (Corolary \ref{c:product-Frechet-Baire}) and, therefore, by \cite[Corollary~2.20]{BG-Baire}, $P$ is a Baire space.
Claim 1 implies that $P$ is a linearly independent subset of $\IR^G$. Let $E$ be the linear span of $P$. Then $P$ is a Hamel basis of $E$. Since $P$ is dense in $\IR^G$ and hence also in $E$, the space $E$ is Baire.

\smallskip

(ii) Suppose for a contradiction that there is a countably infinite bounded set $\{x_n\}_{n\in\NN}\subseteq E$ which is not contained in a finite-dimensional subspace. Fix a countable $Q\subseteq P$ such that the sequence $\{x_n\}_{n\in\w}$ lies in the linear span of $Q$. For every $n\in\NN$, there exists a finite $Q_n\subseteq Q$ and a function $\mu_n: Q_n\to\IR$ such that $0\notin \mu_n(Q_n)$ and $x_n=\sum_{q\in Q_n}\mu_n(q) q$. We extend $\mu_n$ onto $Q$  setting $\mu_n(q)=0$ for every $q\in Q\SM Q_n$. Since $\{x_n\}_{n}$ is not contained in a finite-dimensional subspace, $|\bigcup_n Q_n|=\w$. Then there exists $(n_k)_{k\in\NN}\subseteq\NN$ such that
\[
R_k :=Q_{n_k}\setminus\bigcup_{i=1}^{k-1} Q_{n_i}\neq \emptyset \; \mbox{ for every $k>1$}.
\]
Fix $q_1\in Q_{n_1}$ and $q_k\in R_k$ for $k>1$.

Taking into account that $\mu_{n_k}(q_k)\neq 0$, by induction on $k$, we can define a sequence $(\lambda_k)_k\subseteq\IR$ such that
\[
v_k := \sum_{i=1}^k \lambda_i \mu_{n_k}(q_i) > k \quad \mbox{ for every $k\in\NN$}.
\]
Define a function $g: Q\to\IR$ by
\[
g(q)=\begin{cases}
\lambda_i,& \text{if }q=q_i\text{ for some }i\in\NN,\\
0, & \text{otherwise}.
\end{cases}
\]
Claim 1 implies that there exists $\alpha\in G$ such that $g=\pi_\alpha{\restriction}_Q$. Then
\[
\pi_\alpha(x_{n_k})=\pi_\alpha\Big( \sum_{q\in Q_{n_k}}\mu_{n_k}(q) q\Big)= \sum_{q\in Q_{n_k}}\mu_{n_k}(q)\cdot g(q) =v_k>k
\]
for every $k\in\NN$. Therefore, the continuous functional $\pi_\alpha$ is unbounded on the bounded set $\{x_n\}_{n\in\w}$, a contradiction.

The space $E$ is quasi-complete and has the property $(K)$ by Proposition \ref{p:feral-K}.
\smallskip

(iii) Since $E$ is dense in $\IR^G$ and $E$ is feral, it follows from Proposition \ref{p:feral-not-kFU} that $E$ is not a $\kappa$-Fr\'{e}chet--Urysohn space.
\smallskip

(iv) The completion $\widehat E$ of $E$ is $\IR^G$.
Since an arbitrary power of $\IR$ is $\kappa$-Fr\'{e}chet--Urysohn by  \cite[Theorem 3.1]{Mrowka}, $\widehat E$ is a $\kappa$-Fr\'{e}chet--Urysohn space.
\end{proof}

Example \ref{lcb-not_kFU} shows that the quasi-completeness of a Baire lcs $E$ does not imply that $E$ is $\kappa$-Fr\'{e}chet--Urysohn.
\begin{problem}
Let $E$ be a complete Baire lcs. Is $E$ a $\kappa$-Fr\'{e}chet--Urysohn space?
\end{problem}


\section{Baire-like and $b$-Baire-like properties of $\kappa$-Fr\'{e}chet--Urysohn locally convex spaces  } \label{sec:kFU-Baire}


In this section we characterize $\kappa$-Fr\'{e}chet--Urysohn locally convex spaces which are Baire-like, and show that such spaces are $b$-Baire-like. First we recall basic definitions. A subset $A$ of topological vector space $E$ is {\em bornivorous} if for every bounded subset $B$ of $E$, there exists $n\in\w$ such that $B\subseteq nA$. A locally convex space $E$ is called
\begin{enumerate}
\item[$\bullet$] (Ruess \cite{Ruess-76}) {\em $b$-Baire-like} if for any increasing sequence $\{A_n\}_{n\in\w}$ of closed absolutely convex and bornivorous subsets of $E$ covering $E$, there is an $n\in\w$ such that $A_n$ is a neighborhood of zero;
\item[$\bullet$] (Saxon \cite{Saxon-Baire}) {\em Baire-like} if for any increasing sequence $\{A_n\}_{n\in\w}$ of closed absolutely convex subsets of $E$ covering $E$, there is an $n\in\w$ such that $A_n$ is a neighborhood of zero.
\end{enumerate}
The relationships between the introduced notions are described in the next diagram in which none of the implications is invertible (for more details, see \cite[Section~2.4]{kak} and \cite[Section~1.2]{FLPSR})
\[
\xymatrix{
\mbox{Baire} \ar@{=>}[r] & \mbox{Baire-like}  \ar@{=>}[rd]  \ar@{=>}[r] & \mbox{$b$-Baire-like} \ar@{=>}[r]  & \mbox{quasibarrelled}\\
& & \mbox{barrelled} \ar@{=>}[ru] &}.
\]
It is known that for each Tychonoff space $X$, the space $C_p(X)$ is $b$-Baire-like \cite[Corollary~2.8]{kak}, and the space $B_1(X)$ is Baire-like \cite[Theorem~1.1]{BG-Baire-lcs}. Recall that the space $B_1(X)$ of all Baire-one functions on $X$ is the family of all real-valued  functions on $X$ which are limits of sequences in $C_p(X)$.





Our first and somewhat unexpected result shows that $\kappa$-Fr\'{e}chet--Urysohness of an lcs $E$ implies that $E$ is $b$-Baire-like.
\begin{theorem} \label{t:kFU=>b-Baire-like}
Each $\kappa$-Fr\'{e}chet--Urysohn lcs $E$ is $b$-Baire-like. Consequently, $E$ is quasibarrelled.
\end{theorem}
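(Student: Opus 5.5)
Suppose $E$ is not $b$-Baire-like and argue by contradiction, as in the proof of Theorem \ref{t:kFU-MK-dense-Baire}. Then there is an increasing sequence $\{A_n\}_{n\in\w}$ of closed absolutely convex bornivorous sets with $\bigcup_n A_n=E$ such that no $A_n$ is a neighborhood of zero. Since $A_n$ is closed and absolutely convex, if it had an interior point $a$, then $-a\in A_n$ as well, and $0=\tfrac12 a+\tfrac12(-a)$ would be an interior point. Hence each $A_n$, and likewise each $nA_n$ and each $n2^nA_n$, has empty interior. Therefore the open set $U_n:=E\SM n2^nA_n$ is dense, and in particular $0\in\overline{U_n}$.

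Next apply Theorem \ref{t:kFU-group-charac}(i) to the sequence $\{U_n\}_{n\in\w}$. It gives a strictly increasing $(n_k)$ and a null sequence $x_k\in U_{n_k}$, so $x_k\notin n_k2^{n_k}A_{n_k}$. Since $n_k\ge k$ and the $A_n$ increase and are balanced, we have $k2^kA_k\subseteq n_k2^{n_k}A_{n_k}$, and hence $x_k\notin k2^kA_k$. Now set $y_k:=2^{-k}x_k$. Then $y_k\to 0$, and in fact $y_k$ decays fast, though local convexity makes this unnecessary. The sequence $\{y_k\}$ is bounded, and $y_k\notin kA_k$.

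The final step is to contradict bornivorousness. Each $A_n$ is bornivorous, but we need a single $n$ and $m$ with $\{y_k\}\subseteq mA_n$, whereas the sets vary with $k$. This is the main obstacle, and convexity together with the covering property must do the work. I would consider the point $z:=\sum_k y_k$, or better $z:=\sum_k k\,y_k$ after rescaling $y_k:=4^{-k}x_k$ so that $k y_k\to 0$ rapidly. The trouble is that convergence of this series requires completeness, which we do not have, so this route is wrong. The correct route uses the bounded set directly. The set $B:=\{k y_k\}_k$, with $y_k=4^{-k}x_k$, is null and hence bounded. Since $A_1$ is bornivorous, $B\subseteq mA_1\subseteq mA_k$ for some $m$ and all $k$. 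So $ky_k\in mA_k$, that is, $y_k\in \tfrac{m}{k}A_k\subseteq A_k$ for $k\ge m$, using that $A_k$ is balanced. To get the contradiction, I arrange from the start that $x_k\notin k4^kA_k$ by taking $U_n:=E\SM n4^nA_n$. Then $ky_k\notin k^2A_k\supseteq mA_k$ for $k\ge m$, which contradicts $ky_k\in mA_k$. So in fact bornivorousness of the single set $A_1$ suffices, and the covering hypothesis is not even needed.

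Quasibarrelledness then follows from the diagram: $b$-Baire-like implies quasibarrelled. I would double-check that the $\kappa$-Fr\'echet--Urysohn property is used only through Theorem \ref{t:kFU-group-charac}, and that Hausdorffness is not needed beyond what that theorem requires.
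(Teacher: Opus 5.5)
Your proof is correct and is essentially the paper's argument. You apply Theorem \ref{t:kFU-group-charac}(i) to the open sets $U_n=E\SM n4^nA_n$ (the paper simply uses $U_n=E\SM nA_n$), extract a null sequence with $x_k\notin n_k4^{n_k}A_{n_k}$, and reach a contradiction because a bounded set built from it is absorbed by the single bornivorous set $A_1$ (the paper uses $A_0$), and hence lies in $mA_k$ for every $k$.

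The rescaling $y_k=4^{-k}x_k$ and the set $\{ky_k\}$ are superfluous, since the null sequence $\{x_k\}$ is itself bounded. As you observe, neither your argument nor the paper's uses the covering hypothesis.
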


\begin{proof}
Let $\{A_n\}_{n\in\w}$ be an increasing sequence of closed absolutely convex and bornivorous subsets of $E$ covering $E$. Suppose for a contradiction that none of the sets $A_n$ is a neighborhood of zero in $E$. For each $n\in\w$, set $U_n:=E\SM (nA_n)$. Then $U_n$ is an open subset of $E$ such that $0\in \overline{U_n}$ (because $\Int(nA_n)=\emptyset$). Since  $E$ is $\kappa$-Fr\'{e}chet--Urysohn, (i) of Theorem \ref{t:kFU-group-charac} implies that there are a strictly increasing sequence $(n_k)\subseteq \w$ and a sequence $\{x_k\}_{k\in\w}$ in $E$ such that $x_k\in U_{n_k}$ $(k\in\w)$ and $x_k\to 0$.
By construction, the set $K:=\{x_k\}_{k\in\w}\cup\{0\}$ is compact and, for every $n\in\w$, the intersection $nA_n\cap K$ is finite.

On the other hand, since $K$ is compact and hence bounded, there is an $n\in\w$ such that $K\subseteq nA_0$. Since $\{A_n\}_{n\in\w}$ is increasing, we obtain that $K\subseteq nA_n$. Since $nA_n\cap K$ must be finite, we obtain a contradiction. Thus $E$  is a $b$-Baire-like space.
\end{proof}


Every metrizable lcs is $b$-Baire-like, see \cite[Proposition~2.11]{kak}. However, Example \ref{exa:metr-not-K-Baire} shows that there are metrizable (hence $\kappa$-Fr\'{e}chet--Urysohn) locally convex spaces which are not Baire-like. (Indeed, it is easy to see that the space $E$ from Example \ref{exa:metr-not-K-Baire} is not barrelled, and hence it is not Baire-like.) In fact this example shows that to obtain the stronger property of being Baire-like we need some additional property in the completion of the space, as for example, of being ``massive'' in the sense of (iii) of the next theorem.

\begin{theorem} \label{t:kFU-Baire-like}
For a $\kappa$-Fr\'{e}chet--Urysohn lcs $E$, the following assertions are equivalent:
\begin{enumerate}
\item[{\rm(i)}] $E$ is Baire-like;
\item[{\rm(ii)}]  $E$ is barrelled;
\item[{\rm(iii)}] for every increasing sequence $\{A_n\}_{n\in\w}$ of closed absolutely convex subsets of $E$ covering $E$, it follows that  $\overline{E}=\bigcup_{n\in\w} \overline{A_n}$, where $\overline{E}$ is the completion of $E$.
\end{enumerate}
\end{theorem}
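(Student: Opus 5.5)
We prove (i)$\Ra$(ii)$\Ra$(iii)$\Ra$(i). The implication (i)$\Ra$(ii) is the classical fact that every Baire-like space is barrelled (see the diagram above). It also follows directly: for a barrel $T$, the sets $A_n:=nT$ form an increasing sequence of closed absolutely convex sets covering $E$, so some $nT$, and hence $T$, is a neighborhood of zero.

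For (ii)$\Ra$(iii), let $\{A_n\}_{n\in\w}$ be an increasing sequence of closed absolutely convex sets covering $E$, and let $\overline{E}$ denote the completion. Suppose some $z\in\overline{E}$ lies in no $\overline{A_n}$, where closures are taken in $\overline{E}$. By Hahn--Banach, for each $n$ there is $f_n\in\overline{E}'=E'$ with $|f_n|\le 1$ on $A_n$ and $|f_n(z)|>1$; after rescaling we can take $|f_n(z)|\ge n$ and $|f_n|\le 1$ on $A_n$. Since the $A_n$ increase and cover $E$, the sequence $\{f_n\}$ is pointwise bounded on $E$: each $x\in E$ lies in $A_m$ for some $m$, so $|f_n(x)|\le 1$ for all $n\ge m$. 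Barrelledness gives equicontinuity of $\{f_n\}$ on $E$, hence on $\overline{E}$ by extension. An equicontinuous family is bounded at every point of the completion, which contradicts $|f_n(z)|\ge n$. This direction does not use $\kappa$-Fr\'{e}chet--Urysohness.

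For (iii)$\Ra$(i), we combine (iii) with Corollary \ref{c:kFU_complete-Baire}, which states that $\overline{E}$ is a Baire space. Given an increasing sequence $\{A_n\}$ as above, (iii) gives $\overline{E}=\bigcup_n \overline{A_n}$, so by Baireness some $\overline{A_n}$ has nonempty interior in $\overline{E}$. Since $\overline{A_n}$ is absolutely convex, the standard argument $\tfrac12(x+U)+\tfrac12(-x-U)\subseteq\overline{A_n}$ shows that $\overline{A_n}$ is a neighborhood of zero in $\overline{E}$. Next, $\overline{A_n}\cap E=A_n$ because $A_n$ is closed in $E$. Hence $A_n$ is a neighborhood of zero in $E$, and $E$ is Baire-like.

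The main obstacle is the step (ii)$\Ra$(iii), specifically passing from equicontinuity on $E$ to a contradiction at the point $z$ of the completion. We handle it as follows: equicontinuity gives a zero-neighborhood $V$ in $E$ with $|f_n|\le1$ on $V$, hence $|f_n|\le1$ on $\overline{V}^{\overline{E}}$, which is a zero-neighborhood in $\overline{E}$ and is therefore absorbing. So $z\in t\overline{V}$ for some $t>0$, giving $|f_n(z)|\le t$ for all $n$, which is the required contradiction.
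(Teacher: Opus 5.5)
Your implications (i)$\Ra$(ii) and (iii)$\Ra$(i) are fine and match the paper. For the Baireness of $\overline{E}$ you cite Corollary~\ref{c:kFU_complete-Baire} directly; the paper obtains the same fact via \cite{Gabr-B1} and Corollary~\ref{c:lc-kFU-Baire}.

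The gap is in (ii)$\Ra$(iii), at the step ``after rescaling we can take $|f_n(z)|\ge n$ and $|f_n|\le 1$ on $A_n$''. Rescaling $f_n$ multiplies $\sup_{A_n}|f_n|$ and $|f_n(z)|$ by the same factor, so the ratio $|f_n(z)|/\sup_{A_n}|f_n|$ cannot be increased this way. That ratio is bounded by the gauge of $\overline{A_n}$ at $z$, and $z\notin\overline{A_n}$ only tells you the gauge exceeds $1$. In fact a functional with $|f_n|\le 1$ on $A_n$ and $|f_n(z)|>n$ exists if and only if $z\notin n\overline{A_n}$. This does not follow from $z\notin\overline{A_n}$. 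With only $|f_n(z)|>1$, your final bound $|f_n(z)|\le t$ gives no contradiction. The step you flagged as the main obstacle, extending equicontinuity to $\overline{E}$, is routine; the normalization is the real problem.

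What your argument does prove, if you start from $z\notin n\overline{A_n}$ for all $n$, is the weaker covering $\overline{E}=\bigcup_n n\overline{A_n}$. Reaching (iii) needs an extra ingredient, and the natural one uses $\kappa$-Fr\'{e}chet--Urysohness after all, contrary to your remark.
\begin{itemize}
\item By Corollary~\ref{c:kFU_complete-Baire}, $\overline{E}$ is Baire. So some $n\overline{A_n}$ has nonempty interior, and hence some $\overline{A_m}$ contains a zero-neighborhood $W$ of $\overline{E}$.
\item Given $z\in\overline{E}$, use density of $E$ to pick $e\in E$ with $2(z-e)\in W$. Then pick $j\ge m$ with $2e\in A_j$.
\item By convexity, $z=\tfrac12(2e)+\tfrac12\cdot 2(z-e)\in\overline{A_j}$.
\end{itemize}
The paper avoids this by citing Valdivia's result \cite[Proposition~2.13]{kak} for (ii)$\Ra$(iii). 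Alternatively, your weak covering together with Baireness of $\overline{E}$ already gives (ii)$\Ra$(i) directly, and the same convexity trick then yields (i)$\Ra$(iii).
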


\begin{proof}
(i)$\Ra$(ii) is clear. 

(ii)$\Ra$(iii) immediately follows from Valdivia's Proposition 2.13 of \cite{kak}.

(iii)$\Ra$(i) Let $\{A_n\}_{n\in\w}$ be an increasing sequence of closed absolutely convex subsets of $E$ covering $E$. For every $n\in\w$, let $\overline{A_n}$ be the closure of $A_n$ in $\overline{E}$. Note that $\{\overline{A_n}\}_{n\in\w}$ is an increasing sequence of closed absolutely convex subsets of $\overline{E}$. By (iii), we have $\overline{E}=\bigcup_{n\in\w} \overline{A_n}$. Since $E$ is dense in $\overline{E}$, Corollary 2.2 of \cite{Gabr-B1} implies that also $\overline{E}$ is $\kappa$-Fr\'{e}chet--Urysohn. As $\overline{E}$ is complete and hence locally complete, Corollary \ref{c:lc-kFU-Baire} implies that $\overline{E}$ is a Baire space. Therefore there is an $m\in\w$ such that $\overline{A_m}$ is a neighborhood of the origin in $\overline{E}$. Hence $A_m=\overline{A_m}\cap E$ is a neighborhood of zero in $E$. Thus $E$ is Baire-like.
\end{proof}


\section{Applications to spaces of continuous functions and Baire-one functions} \label{sec:B1}


In this section, applying our main results, we provide simple proofs of some highly non-trivial results.
Recall that  Theorem~3.1 of \cite{Osipov-252} states that $C_p(X)$ is $\kappa$-Fr\'{e}chet-Urysohn if, and only if, $B_1(X)$ is Baire. The necessity in this result is an easy corollary of Theorem \ref{t:kFU_rsc-Baire}.

\begin{proposition}[\protect{\cite{Osipov-252}}]\label{p:kappa-baire1}
If $C_p(X)$ is $\kappa$-Fr\'{e}chet--Urysohn, then $B_1(X)$ is Baire.
\end{proposition}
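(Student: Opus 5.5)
We apply Theorem \ref{t:kFU_rsc-Baire} with $E:=B_1(X)$ and the dense linear subspace $L:=C_p(X)$. Here $B_1(X)$ carries the topology of pointwise convergence inherited from $\IR^X$, so it is a locally convex space. We must verify three things: that $C_p(X)$ is dense in $B_1(X)$, that it is $\kappa$-Fr\'{e}chet--Urysohn, and that it is relatively sequentially complete in $B_1(X)$.

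Density is immediate. Every $f\in B_1(X)$ is the pointwise limit of a sequence in $C_p(X)$, so $f$ lies in the closure of $C_p(X)$. In fact $C_p(X)$ is already dense in $\IR^X$, because $X$ is Tychonoff. The second property, $\kappa$-Fr\'{e}chet--Urysohness of $C_p(X)$, is exactly our hypothesis.

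The key step is relative sequential completeness. We use the uniformity of $B_1(X)$ induced as a topological vector space, which is the restriction of the product uniformity of $\IR^X$. Let $(f_n)_n\subseteq C_p(X)$ be Cauchy in this uniformity. Then for every $x\in X$ the real sequence $(f_n(x))_n$ is Cauchy, so it converges to some $f(x)$. Hence $f_n\to f$ pointwise. By definition $f$ is a pointwise limit of a sequence of continuous functions, so $f\in B_1(X)$, and $f_n\to f$ in $B_1(X)$. Thus every Cauchy sequence in $C_p(X)$ converges to an element of $B_1(X)$.

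With all three properties established, Theorem \ref{t:kFU_rsc-Baire} yields that $B_1(X)$ is a Baire space. I do not expect a real obstacle. The only point needing care is that the uniform structure used in the definition of ``relatively sequentially complete'' is the canonical tvs uniformity, so that being Cauchy reduces to being coordinatewise Cauchy. Beyond that, the argument is direct bookkeeping of the definitions.
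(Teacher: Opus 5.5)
Your proposal is correct and follows the paper's own proof: apply Theorem \ref{t:kFU_rsc-Baire} with $E=B_1(X)$ and the dense linear subspace $L=C_p(X)$, which is $\kappa$-Fr\'{e}chet--Urysohn by hypothesis and relatively sequentially complete in $B_1(X)$. The only difference is that you write out the routine check of relative sequential completeness (a Cauchy sequence is pointwise Cauchy, and its pointwise limit lies in $B_1(X)$), which the paper simply asserts.
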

\begin{proof}
Since $C_p(X)$ is a relatively sequentially complete and dense subset of $B_1(X)$, Theorem \ref{t:kFU_rsc-Baire} implies that $B_1(X)$ is Baire.
\end{proof}

For any countable ordinal $\alpha>1$, let $B_\alpha(X)$ be the family of all functions $f:X\to \IR$ which are pointwise limits of sequences from $\bigcup_{\beta<\alpha}B_\beta(X)$. The space $B_\alpha(X)$ is the space of all Baire-$\alpha$ functions and is endowed with the topology induced from the product $\IR^X$.

The second application of Theorem \ref{t:kFU_rsc-Baire} is the next result proved in Corollary~4.2 of \cite{BG-Baire}.

\begin{proposition}[\protect{\cite{BG-Baire}}]\label{p:baire2}
For each Tychonoff space $X$ and for every countable ordinal $\alpha>1$, the space $B_{\alpha}(X)$ is a  Baire space.
\end{proposition}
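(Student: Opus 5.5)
The plan is to apply Theorem \ref{t:kFU_rsc-Baire} with $E=B_\alpha(X)$ and the dense subspace $L=B_1(X)$. For this, three facts are needed: $B_1(X)$ is a linear subspace of $B_\alpha(X)$ and is dense in it; $B_1(X)$ is relatively sequentially complete in $B_\alpha(X)$; and $B_1(X)$ is $\kappa$-Fr\'{e}chet--Urysohn. Here $B_\alpha(X)$ is a locally convex space, being a linear subspace of $\IR^X$.

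Density is immediate. $C_p(X)\subseteq B_1(X)$, and $C_p(X)$ is dense in $\IR^X$: any finitely many distinct points of a Tychonoff space can be given arbitrary values by a continuous function. Hence $B_1(X)$ is dense in $B_\alpha(X)$.

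Relative sequential completeness uses $\alpha>1$. Let $(f_n)$ be a Cauchy sequence in $B_1(X)$ for the topology of $\IR^X$. Then it converges pointwise to some $f\in\IR^X$. By the definition of $B_\alpha(X)$ we have $f\in B_2(X)\subseteq B_\alpha(X)$, so $(f_n)$ converges to an element of $B_\alpha(X)$.

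The main obstacle is showing that $B_1(X)$ is $\kappa$-Fr\'{e}chet--Urysohn for every Tychonoff $X$. I would instead take $L=B_\beta(X)$ for a suitable $\beta<\alpha$ (or a cleverly chosen intermediate space) only if this fails. The first attempt is to verify Definition \ref{def:kFU} directly, using that basic open sets of $\IR^X$ depend on finitely many coordinates. Let $U$ be open in $B_1(X)$ and $f\in\overline U$. For each finite $F\subseteq X$ and $n\in\w$, pick $g_{F,n}\in U$ within $1/n$ of $f$ on $F$, together with a basic neighbourhood of $g_{F,n}$ contained in $U$. The difficulty is that uncountably many $F$ may be needed. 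So I would try to glue countably many of these witnesses into a single sequence. The gluing would use Baire-one functions, which can be defined piecewise on suitable functionally $F_\sigma$/$G_\delta$ partitions: e.g.\ produce $h_n\in B_1(X)$ agreeing with $g_{F,n}$ on its finitely many support coordinates while being close to $f$ elsewhere, using that $B_1(X)$ is closed under such pasting. If this direct approach stalls, I would fall back on an argument in the spirit of Mr\'{o}wka's theorem for products of first countable spaces, exploiting that $B_1(X)$ is "large" in $\IR^X$ (closed under sequential limits of continuous functions along pasting). Once $\kappa$-Fr\'{e}chet--Urysohness of the dense subspace is established, Theorem \ref{t:kFU_rsc-Baire} yields that $B_\alpha(X)$ is Baire.
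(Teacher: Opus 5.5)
Your reduction is the paper's: Theorem~\ref{t:kFU_rsc-Baire} with $L=B_1(X)$. The paper applies it to $E=B_2(X)$ and then uses that a space with a dense Baire subspace is Baire. Applying it directly to $E=B_\alpha(X)$, as you do, is equally valid, and your density and relative sequential completeness arguments are correct.

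The gap is the step you flag and leave open: that $B_1(X)$ is $\kappa$-Fr\'{e}chet--Urysohn. This is the only nontrivial input. The paper does not prove it but quotes Theorem~5.15 of \cite{Gabr-seq-Ascoli}; your proposal neither cites nor proves it. Your fallbacks do not remove the obstacle. Using some $B_\beta(X)$ instead needs the same property for $B_\beta(X)$. Mr\'{o}wka's theorem concerns the full product $\IR^X$, and $\kappa$-Fr\'{e}chet--Urysohness does not pass to dense linear subspaces (Example~\ref{lcb-not_kFU}, or $C_p([0,1])\subseteq\IR^{[0,1]}$).

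Your gluing idea can be completed, but two ingredients are missing.

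First, to land in $U$ the glued function must reproduce $g_{F,n}$ on the support $S$ of its basic neighbourhood, while $g_{F,n}$ is controlled only on $F$. If a point of $S\SM F$ recurs, nothing forces convergence to $f$ there. The fix is to choose recursively: $g_n\in U$ within $\frac{1}{n+1}$ of $f$ on $F_n$, then a basic neighbourhood of $g_n$ inside $U$ with finite support $S_n$, and set $F_{n+1}=F_n\cup S_n$. Then $A=\bigcup_n F_n$ is countable and $g_n(a)\to f(a)$ for each $a\in A$.

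Second, you need a concrete pasting device that yields pointwise convergence on all of $X$. It cannot be continuous in general, since that would make every $C_p(X)$ $\kappa$-Fr\'{e}chet--Urysohn, which is false for $X=[0,1]$. One device that works:
\begin{enumerate}
\item Take a continuous $\phi\colon X\to\IR^\w$ injective on $A$, and put $Z_a=\phi^{-1}(\phi(a))$ for $a\in A$. These are pairwise disjoint zero-sets, so $\chi_{Z_a}\in B_1(X)$.
\item Put $h_n=f+\sum_{a\in S_n}(g_n(a)-f(a))\chi_{Z_a}$. Then $h_n\in B_1(X)$ and $h_n=g_n$ on $S_n$, so $h_n\in U$.
\item Each $x\in X$ lies in at most one $Z_a$, and that $a$ belongs to $F_n$ for all large $n$; if $x$ lies in none, then $h_n(x)=f(x)$. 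Hence $h_n(x)\to f(x)$.
\end{enumerate}
With this argument, or the citation, your proof is complete.
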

\begin{proof}
Since $B_1(X)$ is a relatively sequentially complete and dense subset of $B_2(X)$ and $B_1(X)$ is $\kappa$-Fr\'{e}chet--Urysohn  \cite[Theorem 5.15]{Gabr-seq-Ascoli}, Theorem \ref{t:kFU_rsc-Baire} implies that $B_2(X)$ is a Baire space. Since $B_2(X)$  is a dense subspace of $B_\alpha(X)$, it follows that $B_{\alpha}(X)$ is a  Baire space.
\end{proof}

Below we give applications to the space $\CC(X)$.
Let $X$ be a Tychonoff space. According to \cite{mcoy}, a family $\AAA$ of non-empty subsets of a space $X$ is said to be {\em moving off} if for each compact set $K\subseteq X$, there exists $A\in\AAA$ such that $K\cap A=\emptyset$.

A  family $\AAA$ of non-empty subsets of a space $X$ is {\em compact-finite} if for every compact set $K\subseteq X$, the family $\{A\in\AAA: A\cap K\not=\emptyset\}$ is finite. A  family $\{A_i\}_{i\in I}$ of non-empty subsets of a space $X$ is said to be {\em strongly compact-finite} if for every $i\in I$, there is an open set $U_i$ such that $A_i\subseteq U_i$ and the family $\{U_i\}_{i\in I}$ is compact-finite. The following characterization of $\kappa$-Fr\'{e}echet--Urysohn spaces $\CC(X)$ was obtained by Sakai \cite[Theorem~2.3]{Sakai}.

\begin{theorem}[\cite{Sakai}] \label{t:Ck-kFU}
For a Tychonoff space $X$, the space $\CC(X)$ is $\kappa$-Fr\'{e}chet--Urysohn if, and only if, $X$ satisfies the following property:
\begin{enumerate}
\item[$(\kappa_k)$] every moving off family of compact subsets of $X$ has a countable subfamily which is strongly compact-finite.
\end{enumerate}
\end{theorem}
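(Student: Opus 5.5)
The plan is to prove the two implications separately, working at the point $0$. This is enough because $\CC(X)$ is a topological vector space, so translations are homeomorphisms and the $\kappa$-Fr\'{e}chet--Urysohn condition at an arbitrary point reduces to the condition at $0$. Basic neighborhoods have the form $g+[C,\e]$, where $[C,\e]=\{h:\ |h|<\e \text{ on } C\}$ with $C$ compact and $\e>0$.

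\emph{Necessity.} Assume $\CC(X)$ is $\kappa$-Fr\'{e}chet--Urysohn and let $\AAA$ be a moving off family of compact sets. I will use the open set
\[
U=\bigcup_{A\in\AAA}\{f\in \CC(X):\ f>1 \text{ on } A\}.
\]
First I check that $0\in\overline U$. Given a compact $K$, choose $A\in\AAA$ disjoint from $K$. Since $A$ and $K$ are disjoint compact sets in a Tychonoff space, there is $f\in C(X)$ with $f=0$ on $K$ and $f=2$ on $A$; then $f\in U\cap[K,\e]$. By the $\kappa$-Fr\'{e}chet--Urysohn property there are $f_n\in U$ with $f_n\to 0$, together with sets $A_n\in\AAA$ such that $f_n>1$ on $A_n$.

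No single $A$ can occur infinitely often among the $A_n$, since $f_n\to0$ uniformly on that compact set $A$. So after passing to a subsequence the $A_n$ are distinct. Put $U_n=\{f_n>1/2\}$, an open set containing $A_n$. For a compact $K$ we have $f_n<1/2$ on $K$ eventually, so only finitely many $U_n$ meet $K$. Hence $\{A_n\}_n$ is a strongly compact-finite countable subfamily of $\AAA$.

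\emph{Sufficiency.} Assume $(\kappa_k)$ and let $U$ be open with $0\in\overline U\SM U$. For each triple $\tau=(g,C,\e)$ with $g+[C,\e]\subseteq U$, set
\[
L_\tau=C\cap\{|g|\ge \e/2\}.
\]
This is a nonempty compact set: if it were empty, then $|g|<\e/2$ on $C$, so $0\in g+[C,\e]\subseteq U$, contradicting $0\notin U$. Suppose the family $\{L_\tau\}$ is moving off. Then $(\kappa_k)$ yields triples $\tau_i=(g_i,C_i,\e_i)$ and open sets $W_i\supseteq L_{\tau_i}$ such that $\{W_i\}_i$ is compact-finite. For each $i$ I choose $\varphi_i\in C(X,[0,1])$ with $\varphi_i=1$ on $L_{\tau_i}$ and $\varphi_i=0$ off $W_i$, and set $h_i=g_i\varphi_i$.

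On $C_i$ we have $|h_i-g_i|=|g_i|(1-\varphi_i)$. This is $0$ on $L_{\tau_i}$ and less than $\e_i/2$ on $C_i\SM L_{\tau_i}$, so $h_i\in g_i+[C_i,\e_i]\subseteq U$. Moreover, any compact $K$ meets only finitely many $W_i$, so $h_i=0$ on $K$ for all large $i$. Therefore $h_i\to0$ in $\CC(X)$, which proves that $\CC(X)$ is $\kappa$-Fr\'{e}chet--Urysohn.

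\emph{Main obstacle.} The hard step is verifying that $\{L_\tau\}$ is moving off. Given a compact $K$, I need a triple $\tau$ with $L_\tau\cap K=\emptyset$, that is, $|g|<\e/2$ on $K\cap C$. The difficulty is that $\e$ is forced by the choice of $g$ and $C$. My first attempt is to pass to a suitable subfamily of triples where $\e$ can be controlled. Specifically, I would fix $g\in U$ with a basic neighborhood $g+[C,\e]\subseteq U$, and then use $0\in\overline U$ to pick $g'\in U\cap[K\cup C,\delta]$ for very small $\delta$, together with its own neighborhood. This is where the argument is delicate, and I would expect to index the family by a countable scale of $\e$'s, or to replace $U$ by a smaller open set whose points carry uniform neighborhood radii, as in Sakai's original argument. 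If this step fails, the remaining steps still go through once any moving off family with the properties above is found.
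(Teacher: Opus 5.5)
Your necessity argument is correct; the paper gives no proof of this theorem and only cites Sakai. The sufficiency part, however, has a genuine gap. It is not just a delicate verification: the family $\{L_\tau\}$ is in general \emph{not} moving off, and no choice of triples or shrinking of $U$ can make it so.

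Take any $x_0\in X$ and the open set $U=\{f\in\CC(X): f(x_0)\neq 0\}$. Then $0\in\overline{U}\SM U$, witnessed by the constants $1/n$. Suppose $g+[C,\e]\subseteq U$. Then $x_0\in C$: otherwise choose $\psi\in C(X,[0,1])$ with $\psi=1$ on $C$ and $\psi(x_0)=0$, and $g\psi$ lies in $g+[C,\e]\SM U$. Also $|g(x_0)|\ge\e$: otherwise $g-g(x_0)$ lies in $g+[C,\e]\SM U$. Hence $x_0\in L_\tau$ for every $\tau$. The compact set $\{x_0\}$ therefore meets every $L_\tau$, so no infinite subfamily is even compact-finite, and the same holds for any open $U'\subseteq U$. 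For compact $X$ the family is never moving off, and $(\kappa_k)$ is vacuous there.

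The underlying problem is your design. Your $h_i$ must agree with $g_i$ on all of $L_{\tau_i}$ and vanish off $W_i$. In this example, though, every sequence in $U$ converging to $0$ must be nonzero but small at the common point $x_0$.

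The missing idea is to let $h$ be merely small, rather than zero, off the open sets. For $\eta>0$ put $M_{\tau,\eta}=C\cap\{|g|\ge\max(\e/2,\eta)\}$. Let $W\supseteq M_{\tau,\eta}$ be open, and let $\varphi\in C(X,[0,1])$ equal $1$ on $L_\tau$ and $0$ off $W\cup\{|g|<\eta\}$. Then $h=g\varphi\in g+[C,\e]\subseteq U$ and $|h|<\eta$ off $W$.

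For each fixed $\eta$ these sets do move off: given compact $K$, take $g\in U\cap[K,\eta]$. But you then need $\eta\to 0$ along the extracted subfamily. Applying $(\kappa_k)$ separately for each $\eta=1/n$ does not give this, since there is no uniform way to diagonalize over all compacta.

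One way around this is as follows. If $X$ is compact, $\CC(X)$ is normed and there is nothing to prove. Otherwise apply $(\kappa_k)$ to the moving off family of singletons. This gives points $x_n$ such that every compact set contains only finitely many of them. Next apply $(\kappa_k)$ to the family $\{M_{\tau,1/n}\cup\{x_n\}\}_{\tau,n}$. This family is moving off: given $K$, pick $n$ with $x_n\notin K$ and $g\in U\cap[K,1/n]$. You obtain a strongly compact-finite subfamily $\{M_{\tau_j,1/n_j}\cup\{x_{n_j}\}\}_j$, with open expansions $W_j$. Each value $n$ occurs only finitely often, because $\{x_n\}$ is compact and lies in every $W_j$ with $n_j=n$; hence $n_j\to\infty$. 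The functions $h_j=g_j\varphi_j$ built as above then lie in $U$, and on each compact $K$ they satisfy $|h_j|<1/n_j$ for all but finitely many $j$. So $h_j\to 0$ in $\CC(X)$.
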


Theorem \ref{t:Ck-kFU} and Theorem \ref{t:kFU=>b-Baire-like} immediately imply the next corollary.
\begin{corollary} \label{c:Ck-b-Baire-like}
If a Tychonoff space $X$ has the property $(\kappa_k)$, then $\CC(X)$ is $b$-Baire-like.
\end{corollary}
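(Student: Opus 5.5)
The corollary follows by chaining two earlier results, so the plan is to compose them.

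First, assume $X$ is a Tychonoff space with the property $(\kappa_k)$. Theorem \ref{t:Ck-kFU} (Sakai) gives the direction we need: $(\kappa_k)$ implies that $\CC(X)$ is $\kappa$-Fr\'{e}chet--Urysohn. For this we only need to know that $\CC(X)$ is a Tychonoff space, which it is, being a Hausdorff topological vector space.

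Second, the compact-open topology on $C(X)$ is generated by the seminorms $f\mapsto\sup_{x\in K}|f(x)|$, where $K$ ranges over the compact subsets of $X$. Hence $\CC(X)$ is a locally convex space. We may then apply Theorem \ref{t:kFU=>b-Baire-like} to the $\kappa$-Fr\'{e}chet--Urysohn lcs $\CC(X)$ and conclude that $\CC(X)$ is $b$-Baire-like, and in particular quasibarrelled.

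There is no genuine obstacle. The only point to check is that the hypotheses match. Local convexity is needed for Theorem \ref{t:kFU=>b-Baire-like}, because $b$-Baire-likeness is defined only for locally convex spaces, and the seminorm description above settles it.
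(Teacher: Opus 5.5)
Your proposal is correct and follows the paper's own argument exactly: the paper obtains the corollary immediately by combining Sakai's Theorem~\ref{t:Ck-kFU} (property $(\kappa_k)$ makes $\CC(X)$ $\kappa$-Fr\'{e}chet--Urysohn) with Theorem~\ref{t:kFU=>b-Baire-like}. The only hypothesis that needs checking is that $\CC(X)$ is locally convex, and your seminorm description of the compact-open topology settles it.
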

We do not know a characterization of spaces $X$ for which $\CC(X)$ is a $b$-Baire-like space. On the other hand, Lehner \cite[Proposition~2.18]{kak} characterized Baire-like spaces $\CC(X)$.

To characterize Tychonoff spaces $X$ for which $\CC(X)$ is a Baire space is an old open problem. Some necessary and sufficient conditions for enough narrow classes of spaces were obtained by McCoy and Ntantu \cite[Chapter~5.3]{mcoy} and Gruenhage and Ma \cite{GruenhageMa1997}.
Below, under some additional condition on $\CC(X)$, we characterize Baire spaces $\CC(X)$.
\begin{theorem}\label{t:Ck_MK}
If $\CC(X)$ has the property $(MK)$, then $\CC(X)$ is a Baire space if, and only if, $X$ satisfies condition $(\kappa_k)$.
\end{theorem}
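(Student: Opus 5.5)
The plan is to prove the two implications separately, combining the characterization in Theorem \ref{t:Ck-kFU} with the main result, Theorem \ref{t:MK-Baire}, and Sakai's observation that Baireness of $\CC(X)$ forces $\kappa$-Fr\'{e}chet--Urysohness.

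For sufficiency, assume $X$ satisfies $(\kappa_k)$. By Theorem \ref{t:Ck-kFU}, $\CC(X)$ is a $\kappa$-Fr\'{e}chet--Urysohn space. By hypothesis $\CC(X)$ is a tvs with the property $(MK)$. Theorem \ref{t:MK-Baire} then gives directly that $\CC(X)$ is a Baire space.

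For necessity, assume $\CC(X)$ is Baire. I would invoke Sakai's result \cite[Proposition~2.5]{Sakai}, recalled in the Introduction: Baireness of $\CC(X)$ implies that $\CC(X)$ is $\kappa$-Fr\'{e}chet--Urysohn. Theorem \ref{t:Ck-kFU} then yields that $X$ has $(\kappa_k)$. This direction does not use $(MK)$.

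The only possible obstacle is making sure Sakai's proposition applies to an arbitrary Tychonoff $X$ without extra hypotheses. If it turned out to need more, I would prove necessity directly via Sakai's method. Given a moving off family $\mathcal{A}$ of compact sets, consider the open sets
\[
G_n=\{f\in \CC(X): \exists A\in\mathcal{A}\ \ f{\restriction}_A>n\}.
\]
Each $G_n$ is dense, because $\mathcal{A}$ is moving off and $X$ is Tychonoff. Baireness then provides some $f\in\bigcap_n G_n$, and from it one extracts countably many $A_n\in\mathcal{A}$ together with neighborhoods $U_n=\{f>n\}$. This family $\{U_n\}$ is compact-finite, since $f$ is bounded on compact sets.
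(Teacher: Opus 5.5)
Your proposal is correct and matches the paper's proof: sufficiency combines Sakai's characterization ($\CC(X)$ is $\kappa$-Fr\'{e}chet--Urysohn iff $X$ has $(\kappa_k)$) with the main theorem for $\kappa$-Fr\'{e}chet--Urysohn spaces with $(MK)$, and necessity combines Sakai's result that a Baire $\CC(X)$ is $\kappa$-Fr\'{e}chet--Urysohn with the same characterization, without using $(MK)$. Your fallback direct argument is not needed, but it is essentially sound; note that each $A\in\mathcal{A}$ can occur only finitely often among the $A_n$ because $f$ is bounded on $A$, so you do obtain an infinite countable subfamily.
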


\begin{proof}
If $\CC(X)$ is a Baire space, then, by Proposition 2.5(1) of \cite{Sakai} and Theorem \ref{t:Ck-kFU},  $X$ satisfies condition $(\kappa_k)$. Conversely, if  $X$ satisfies condition $(\kappa_k)$, then, by Theorem \ref{t:Ck-kFU}, $\CC(X)$ is $\kappa$-Fr\'{e}chet--Urysohn, and hence, by Theorem \ref{t:MK-Baire},  $\CC(X)$ is a Baire space.
\end{proof}

Theorem \ref{t:Ck_MK} motivates the following problem.
\begin{problem}
Characterize Tychonoff spaces $X$ for which $\CC(X)$ has the property $(MK)$.
\end{problem}

A characterization of local completeness of $\CC(X)$ in terms of $X$ is given in Theorem 2.1 of \cite{Gabr-lc-Ck}. It is a classical result that $\CC(X)$ is complete if, and only if, $X$ is a $k_\IR$-space, see \cite[Corollary~5.1.2]{mcoy}. A space $X$ is a {\em $k_\IR$-space} if every $k$-continuous function $f:X\to\IR$ is continuous ($f$ is {\em $k$-continuous} if each restriction of $f$ to any compact set $K\subseteq X$ is continuous). Now Theorem \ref{t:Ck_MK} and Proposition \ref{p:lc-k-complete} immediately imply the following assertion.

\begin{corollary} \label{c:Ck-Baire-kR}
If a Tychonoff space $X$ is such that $\CC(X)$ is locally complete {\rm(}for example, $X$ is a $k_\IR$-space{\rm)}, then $\CC(X)$ is a Baire space if, and only if, $X$ satisfies condition  $(\kappa_k)$.
\end{corollary}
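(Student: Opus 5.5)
The corollary should follow by combining Theorem~\ref{t:Ck_MK} with Proposition~\ref{p:lc-k-complete}, plus the classical completeness criterion for $\CC(X)$ to handle the parenthetical case.

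First I settle the parenthetical claim. If $X$ is a $k_\IR$-space, then $\CC(X)$ is complete by \cite[Corollary~5.1.2]{mcoy}. A complete lcs is sequentially complete, hence locally complete, as recalled before Proposition~\ref{p:feral-K} (see \cite{Jar}). So the $k_\IR$ case is a special instance of the general hypothesis that $\CC(X)$ is locally complete.

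Now assume only that $\CC(X)$ is locally complete. Since $\CC(X)$ is a locally convex space, Proposition~\ref{p:lc-k-complete} shows that it has the property $(MK)$. Then the hypothesis of Theorem~\ref{t:Ck_MK} holds, and that theorem says directly that $\CC(X)$ is Baire if, and only if, $X$ satisfies $(\kappa_k)$.

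For clarity I would also spell out the two directions that Theorem~\ref{t:Ck_MK} packages. If $\CC(X)$ is Baire, then Sakai's Proposition~2.5 gives that $\CC(X)$ is $\kappa$-Fr\'{e}chet--Urysohn, and Theorem~\ref{t:Ck-kFU} then yields $(\kappa_k)$. Conversely, $(\kappa_k)$ gives $\kappa$-Fr\'{e}chet--Urysohness by Theorem~\ref{t:Ck-kFU}, and Corollary~\ref{c:lc-kFU-Baire} then gives Baireness; this direction avoids the property $(MK)$ altogether.

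There is no real obstacle here. The only point to check is that the chain complete $\Rightarrow$ sequentially complete $\Rightarrow$ locally complete is available for $\CC(X)$, and it is, since $\CC(X)$ is a locally convex space.
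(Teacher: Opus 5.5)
Your proposal is correct and is exactly the paper's argument: Proposition~\ref{p:lc-k-complete} gives $\CC(X)$ the property $(MK)$, Theorem~\ref{t:Ck_MK} then yields the equivalence, and the $k_\IR$ case goes through completeness of $\CC(X)$ as you describe. One small quibble: citing Corollary~\ref{c:lc-kFU-Baire} for the converse does not really avoid the property $(MK)$, since that corollary is itself derived from Theorem~\ref{t:MK-Baire} together with Proposition~\ref{p:lc-k-complete}.
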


%






\bibliographystyle{amsplain}

\end{document}